\newcommand{\msc}[2][2000]{%
  \let\@oldtitle\@title%
  \gdef\@title{\@oldtitle\footnotetext{#1 \emph{Mathematics subject
        classification.} #2}}%
}
\theoremstyle{plain}
\newtheorem{theorem}{Theorem} [section]
\newtheorem{lemma}[theorem]{Lemma}
\newtheorem{corollary}[theorem]{Corollary}
\newtheorem{proposition}[theorem]{Proposition}
\newtheorem{hyp}[theorem]{Assumption}
\theoremstyle{remark}
\newtheorem{remark}[theorem]{Remark}
\newtheorem{example}[theorem]{Example}
\def\C{{\mathbb C}}
\def\R{{\mathbb R}}
\def\N{{\mathbb N}}
\def\Sch{{\mathcal S}}
\def\O{\mathcal O}
\def\op{\mathrm{op}}
\def\Re{\mathrm{Re}}
\def\Im{\mathrm{Im}}
\def\d{{\partial}}
\def\eps{\varepsilon}
\def\aaa{{1}}
\def\gg{{\tt g}}
\def\eigen{{\Lambda}}
\def\Tend#1#2{\mathop{\longrightarrow}\limits_{#1\rightarrow#2}}
\def\({\left(}
\def\){\right)}
\def\<{\left\langle}
\def\>{\right\rangle}
\def\le{\leqslant}
\def\ge{\geqslant}
\numberwithin{equation}{section}
\begin{document}

\title[Separation of scales]{Separation of scales: Dynamical approximations for composite quantum systems}

\author[I. Burghardt]{Irene Burghardt}
\address[I. Burghardt]{Institute of Physical and Theoretical Chemistry, Goethe University Frankfurt, Max-von-Laue-Str. 7, 60438 Frankfurt am Main, Germany }
\email{burghardt@chemie.uni-frankfurt.de}

\author[R. Carles]{R\'emi Carles}
\address[R. Carles]{Univ Rennes, CNRS\\ IRMAR - UMR 6625\\ F-35000
  Rennes, France}
\email{Remi.Carles@math.cnrs.fr}

\author[C. Fermanian]{Clotilde Fermanian Kammerer}
\address[C. Fermanian Kammerer]{
Univ Paris Est Creteil, CNRS, LAMA, F-94010 Creteil, France\\ 
Univ Gustave Eiffel, LAMA, F-77447 Marne-la-Vallée, France}
\email{clotilde.fermanian@u-pec.fr}

\author[B. Lasorne]{Benjamin Lasorne}
\address[B. Lasorne]{ICGM, Univ Montpellier, CNRS, ENSCM, Montpellier, France}
\email{benjamin.lasorne@umontpellier.fr}

\author[C. Lasser]{Caroline Lasser}
\address[C. Lasser]{Technische Universit\"at M\"unchen, Zentrum Mathematik, Deutschland}
\email{classer@ma.tum.de}


\begin{abstract}
We consider composite quantum-dynamical systems that can be partitioned into weakly interacting subsystems, similar to system-bath type situations. Using a factorized wave function ansatz, we mathematically characterize dynamical scale separation.
Specifically, we investigate a coupling régime that is partially flat, i.e., slowly varying with respect to one set of variables, for example, those of the bath. Further, we study the situation where one of the sets of variables is semiclassically scaled and derive a quantum-classical formulation. In both situations, we propose two schemes of dimension reduction: one based on Taylor expansion (collocation) and the other one based on partial averaging (mean-field). We analyze the error  
for the wave function and for the action of observables, obtaining comparable estimates for both approaches. The present study is the first step towards a general analysis of scale separation in the context of tensorized wavefunction representations.
\end{abstract}

\keywords{Scale separation, 
composite quantum systems, 
quantum dynamics, 
system-bath system, 
quantum-classical approximation, 
dimension reduction}
\maketitle

\section{Introduction}
We consider \textcolor{black}{composite} quantum-dynamical systems that can be partitioned into weakly interacting subsystems, 
with the goal of developing effective dynamical descriptions that simplify the original, fully quantum-mechanical formulation.
Typical examples are small reactive molecular fragments embedded in a large molecular bath, namely, a protein, or a solvent, all being governed dynamically by quite distinct energy and time scales.
To this end, various r\'egimes  of intersystem couplings are considered, and a quantum-classical approximation is explored. A key aspect is dimension reduction at the wave function level, without referring to the conventional ``reduced dynamics" approaches that are employed in system-bath theories.

\subsection{The mathematical setting}

The quantum system is described by a time-dependent Schr\"odinger equation 
\begin{equation}\label{eq:sep-scales}
i\d_t \psi = H\psi\quad;\quad
   \psi_{\mid t=0}= \psi_0,
\end{equation}
 governed by a Hamiltonian of the form 
\[
H=H_x+H_y +W(x,y),
\]
where the coupling potential $W(x,y)$ is a smooth function, that satisfies growth estimates 
guaranteeing existence and uniqueness of the solution to the Schr\"odinger equation \eqref{eq:sep-scales} for a rather general set of  initial data, as we shall see later in Section~\ref{sec:hypothesis}.
The overall set of space variables is denoted as $(x,y) \in \R^n\times\R^d$ such that the 
total dimension of the configuration space is $n+d$. The wave function depends 
on time $t\ge0$ and both space variables, that is, $\psi = \psi(t,x,y)$. 
We suppose that initially scales are separable, that is, we work with initial data of product form 
\begin{equation}
  \label{eq:CI}
  \psi(0,x,y)=\psi_0(x,y) =    \varphi_0^x(x)\varphi_0^y(y).
\end{equation}
In the simple case without coupling, that is,  $W\equiv 0$, the solution stays separated, 
  $\psi(t,x,y) = \varphi^x(t,x)\varphi^y(t,y)$ 
  for all time, where 
  \begin{eqnarray}\nonumber
 i\d_t\varphi^x=H_x\varphi^x \quad;\quad
   \varphi^x_{\mid t=0}= \varphi_0^x,\\\nonumber
   i\d_t\varphi^y=H_y\varphi^y \quad;\quad 
  \varphi^y_{\mid t=0}= \varphi_0^y,
\end{eqnarray}
and this is an exact formula. Here, we aim at investigating the case of an actual
coupling with $\d_x\d_y W\not\equiv 0$ and look for approximate solutions of the form
\[
\psi_{\rm app}(t,x,y) = \varphi^x(t,x)\varphi^y(t,y),
\]
where the individual components satisfy evolution equations that account for the coupling 
between the variables. The main motivation for such approximations is 
dimension reduction, since $\varphi^x(t,x)$ and~$\varphi^y(t,y)$ depend on variables
of lower dimension than the initial $(x,y)$. Of crucial importance is the choice of the approximate 
Hamiltonian
$H_{\rm app} = H_x + H_y + W_{\rm app}(t,x,y)$, 
that governs the approximate dynamics. We consider two different approximate 
coupling potentials: One is the time-dependent Hartree mean-field 
potential, the other one, computationally less demanding, is based on a brute 
force single-point collocation. Time-dependent Hartree methods have been known for a long 
time and have earned the reputation of oversimplifying the dynamics of real molecular systems. 
We emphasize, that our present study does not aim at rejuvenating but at deriving rigorous 
mathematical error estimates, which seem to be missing in the literature.  
Surprisingly, our error analysis provides similar estimates for both methods, the collocation 
and the mean-field approach. 
We investigate the size of the difference between the true and the approximate solution 
in the $L^2$-norm,  
\[
\|\psi(t) - \psi_{\rm app}(t)\|_{L^2} = 
\sqrt{\int_{\R^{n+d}} |\psi(t,x,y)-\psi_{\rm app}(t,x,y)|^2 \,dx dy}
\]
and in Sobolev norms. We present error estimates that explicitly depend on 
derivatives of the coupling potential $W(x,y)$ and on moments of the
approximate solution.
As an additional error measure we also consider the deviation of true and approximate expectation values
$\<\psi(t),A\psi(t)\> - \<\psi_{\rm app}(t),A\psi_{\rm app}(t)\>$, 
for self-adjoint linear operators~$A$. Roughly speaking, the estimates we obtain for observables depend 
on one more derivative of the coupling potential  $W(x,y)$ than the norm estimates. This means that in many 
situations expectation values are more accurately described than the
wave function itself.  Even though rigorous error estimates that quantify 
the decoupling of quantum subsystems in terms of flatness properties of the coupling potential $W(x,y)$ 
are naturally important, our results here seem to be the first ones of their kind.

\subsection{Relation with previous work} 

Interacting quantum systems have traditionally been formulated from the point of view of reduced dynamics theories, based on quantum master equations in a Markovian or non-Markovian setting~\cite{Breuer:02}.
More recently, also tensorized representations of the full quantum system have been considered, 
as for example by matrix product states (MPS) \cite{Schro_Chin,Strat} or within a multiconfiguration time-dependent Hartree (MCTDH) approach~\cite{Burghardt,GattiLasorne,Nest_Meyer,Wang}.  Both wavefunctions (pure states) and density operators (mixed states) can be described in this framework, and wavefunction-based computations can be used to obtain density matrices~\cite{Tamascelli}. In the chemical physics literature, dimension reduction for quantum systems has been proposed in the context of mean-field methods
\cite{Dirac:30,Gerber:82}, and the quantum-classical mean-field Ehrenfest
approach \cite{Delos:72,Billing:83}. Also, quantum-classical formulations have
been derived in Wigner phase space \cite{Martens:96,Kapral:99} and in a
quantum hydrodynamic setting
\cite{Gindensperger:2000,Burghardt:04,Struyve:20}. Our present mathematical formulation
circumvents formal difficulties of these approaches
\cite{Caro:99,Terno:06,Salcedo:12}, by preserving a quantum wavefunction
description for the entire system.
Previous mathematical work we are aware of is concerned with rather specific coupling models, as for example the coupling of Hartree--Fock 
and classical equations in \cite{CanLeB}, or the time-dependent self-consistent field equations 
in \cite{JinSparZho}, or with adiabatic approximations which rely on eigenfunctions for one part of the system, 
see for example \cite{Teu,MarSor}. To the best of our knowledge, the rather general mathematical analysis  
of scale separation in quantum systems we are developing here is new. 

\subsection{Partially flat coupling.}
For a first approximate potential, we consider a brute-force approach, where we collocate 
partially at a single point, for definiteness we choose the origin, and set
\[
W_{\rm bf}(x,y) = W(x,0) + W(0,y) - W(0,0).
\]
In comparison, following the more conventional time-de\-pendent Hartree approach, we set
\[
W_{\rm mf}(t,x,y) = \<W\>_y(t,x) + \<W\>_x(t,y) - \<W\>(t),
\] 
where we perform partial and full averages of the coupling potential, 
\begin{align*}\nonumber
&\<W\>_x = \int_{\R^{d}} W(x,y)\, |\varphi^x(t,x)|^2 \,dx\Big/ \int_{\R^d}|\varphi^x(t,x)|^2\,dx,\\\nonumber
&\<W\>_y = \int_{\R^{d}} W(x,y)\, |\varphi^y(t,y)|^2 \,dy\Big/ \int_{\R^d}|\varphi^y(t,y)|^2\,dy,\\\nonumber
&\<W\> = \int_{\R^{n+d}} W(x,y)\, |\varphi^x(t,x) \varphi^y(t,y)|^2 \,dx dy\Big/
\int_{\R^{n+d}}|\varphi^x(t,x)\varphi^y(t,y)|^2\,dxdy.
\end{align*}
For both approximations, the brute-force and the mean-field
approximation, we derive various types of estimates for the error in
$L^2$-norm. Our key finding is  that both methods come with
error bounds that are qualitatively the same, since they draw from either evaluations or averages of the function
\[
\delta W(x,x',y,y') = W(x,y)-W(x,y')-W(x',y)+W(x',y').
\]
Depending on whether one chooses to control the auxiliary function $\delta W$
in terms of $\nabla_xW$, $\nabla_yW$ or $\nabla_x\nabla_y W$, the estimate
requires a balancing with corresponding moments of the approximate solution.
For example, Proposition~\ref{prop:error-scales} provides $L^2$-norm estimates of the form
\[
 \|\psi(t)-\psi_{\rm app}(t)\|_{L^2}\le
        \begin{cases}
         {\rm const}\,\|\nabla_y W\|_{L^\infty
          }\|\varphi_0^x\|_{L^2_x}\int_0^t\|y\varphi^y(s)\|_{L^2_y}ds,\\
        {\rm const}\,\|\nabla_x \nabla_y W\|_{L^\infty}
          \int_0^t\|x\varphi^x(s)\|_{L^2_x}
        \|y\varphi^y(s)\|_{L^2_y}ds, 
        \end{cases}
      \]
where $\mathrm{const}\in\{1,2,4\}$, depending on whether $\psi_{\rm app}(t)$ results from 
the brute-force or the mean-field approximation. Example~\ref{ex:W} discusses important 
variants of this estimate using different ways of quantifying the flatness of the coupling potential.
Proposition~\ref{prop:error-scales-energy} gives analogous estimates in Sobolev norms. 
In addition, we analyze the
deviation of the true and the approximate expectation values in a similar
vein. For the expectation values, we again obtain qualitatively similar error
estimates for $W_{\rm bf}$ and $W_{\rm mf}$. The upper bounds differ from the norm bounds in so
far as they involve one more derivative of the coupling potential~$W$ and low
order Sobolev norms of the approximate solution, see
Proposition~\ref{prop:observables}. Hence, from the
perspective of approximation accuracy, the brute force and the
mean-field approach differ only slightly. Therefore, other assessment criteria are needed
for explaining the prevalence of the Hartree method in many applications, as we will discuss 
in Section~\ref{subsec:energy}.

\subsection{Dimension reduction via semiclassical analysis.}
In the second part of the paper we turn to a specific case of the previous general class of coupled Hamiltonians 
$H^\eps = H_x + H_y^\eps + W(x,y)$ and consider for one part of the system a semiclassically scaled Schr\"odinger operator
\[
H_y^\eps = -\frac{\eps^2}{2}\Delta_y + V_2(y),\quad \eps>0.
\]
We will discuss in Section~\ref{subsec:scaling} system-bath Hamiltonians that can be 
recast in this semiclassical format.
The initial data are still a product of the form \ref{eq:CI}, but the $y$-factor is chosen as 
\[
\varphi_0^y(y) = \eps^{-d/4}a\(\frac{y-q_0}{\sqrt\eps}\) e^{ip_0\cdot (y-q_0)/\eps},
\]
that is, $\varphi_0^y$ is a semiclassical wave packet with a smooth and rapidly decaying amplitude function 
$a\in{\mathcal S}(\R^d)$, and an arbitrary phase space centre $(q_0,p_0)\in\R^{2d}$. 
We will choose a semiclassical wave packet approximation for $\varphi^y(t,y)$ exploring two different 
choices for the centre $(q(t),p(t))$. As a first option we consider the classical trajectory
\[
\dot q = p\quad,\quad \dot p = -\nabla V_2(q),
\]
and as a second option the corresponding trajectory resulting from the averaged gradient of the potential $V_2$, 
\[
\<\nabla V_2\>(t)= \int_{\R^d} \nabla_y V_2(y)\, |\varphi^y(t,y)|^2\, dy\Big/ \int_{\R^d}|\varphi^y(t,y)|^2\,dy. 
\]
Correspondingly, the approximative factor $\varphi^x(t,x)$ is evolved by the partial Hamiltonian $H_x + W_{\rm eff}$ 
with
\begin{align*}\nonumber
&W_{\rm eff}(t,x) = W(x,q(t)) \quad {\rm or}\\\nonumber
&W_{\rm eff}(t,x) = \int_{\R^d} W(x,y)\, |\varphi^y(t,y)|^2\,dy\Big/ \int_{\R^d}|\varphi^y(t,y)|^2\,dy.
\end{align*}
We obtain error estimates in $L^2$-norm, see Proposition~\ref{prop:error-semi} 
and expectation values, see Proposition~\ref{prop:observables-semi}. 
These estimates are given in terms of the semiclassical parameter~$\eps$ and derivatives 
of the coupling potential. Again, both choices for the effective potential differ only slightly 
in approximation accuracy. Measuring the coupling strength in terms of $\eta = \|\nabla_yW\|_{L^\infty}$, we obtain two-parameter estimates of order $\sqrt\eps+\eta/\sqrt\eps$ in norm,  
while the ones for the expectation values are of order $\eps+\eta$. Hence, again the accuracy 
of quadratic observables is higher than the one for wavefunctions.

\section{Assumptions}\label{sec:hypothesis}
We describe here the mathematical setting that will be ours, and discuss it in the
context of system-bath Hamiltonians \cite{Weiss:12,Breuer:02}. Our Hamiltonian is of the form
\begin{eqnarray}\label{eq:ham}
& H=H_x+H_y +W(x,y),\ {\rm with}\\   
& H_x = -\frac{1}{2}\Delta_x +V_1(x),\quad H_y =-\frac{1}{2}\Delta_y +V_2(y),\nonumber
\end{eqnarray}
where the potentials $V_1(x)$, $V_2(y)$ and the coupling potential $W(x,y)$ are all 
smooth functions, that satisfy growth conditions as given in 
  Assumption~\ref{hyp:potentials}. We will denote $V(x,y) = V_1(x) + V_2(y) + W(x,y)$ and abbreviate the Lebesgue spaces for the different variables $x$, $y$, and $(x,y)$ by 
\[
L^2_x = L^2(\R^n)\quad,\quad L^2_y = L^2(\R^d)\quad, \quad L^2 = L^2(\R^{n+d}).
\] 
The initial data $\psi_0(x,y)$ in \eqref{eq:CI} are products of functions $\varphi_0^x\in L^2_x$ and $\varphi_0^y\in L^2_y$, 
that are square-integrable and typically, Schwartz class, see below.


%
\subsection{Assumptions on regularity and  growth of the
  potentials}\label{sec:assumptions} 

We choose a very classical set of assumptions on the regularity and the growth of the potential, since
our focus is more on finding appropriate ways to approximate the
solution in a standard framework than on treating specific
situations.

\begin{hyp}\label{hyp:potentials}
  All the potentials that we consider are smooth, real-valued, and at
most quadratic in their variables:
     \[
    V_1\in C^\infty(\R^n;\R),\  V_2\in C^\infty(\R^d;\R),\ W\in
    C^\infty(\R^{n+d};\R),
  \]
and, for $\alpha\in \N^n$, $\beta\in \N^d$,
    \begin{eqnarray}\nonumber
 \d_x^\alpha V_1\in L^\infty\ {\rm for}\ |\alpha|\ge 2,
   \quad \d_y^\beta V_2\in L^\infty\ {\rm for} \ |\beta|\ge 2 ,\\\nonumber
    \d_x^\alpha\d_y^\beta W\in L^\infty\ {\rm for}\ |\alpha|+|\beta|\ge 2. 
    \end{eqnarray}
  We also assume that $\nabla_y W\in L^\infty$, but note that this condition can easily be relaxed, see Example~\ref{ex:W}. 
  All the initial date we consider are smooth and rapidly decaying, that is, Schwartz class functions:
  \[
  \varphi^x_0\in{\mathcal
    S}(\R^n;\C),\quad \varphi^y_0\in{\mathcal S}(\R^d;\C)\quad
  ({\rm hence} \ \psi_0\in {\mathcal S}(\R^{n+d};\C)).
  \]
  \end{hyp}
  Under the above assumption, it is well-known that $H_x$, $H_y$ and
  $H$ are essentially self-adjoint on $L^2(\R^N)$, with $N=n,d$ and
  $n+d$, respectively (as a consequence of Faris-Lavine
  Theorem, see e.g. \cite[Theorem~X.38]{ReedSimon2}). 
  \begin{example}
    Since Assumption~\ref{hyp:potentials} involves similar properties
    for $V_1$, $V_2$ or $W$, we present examples for $V_1$ only, which
    can readily be adapted to $V_2$ and $W$. For instance, we can
   consider
    \[
      V_1(x) = \sum_{j=1}^n \omega_j^2 x_j^2 +\sum_{j=1}^n \beta_j
      e^{-\<x,A_j x\>}+ V_{\rm per}(x),
    \]
    with $\omega_j\ge 0$ (possibly anisotropic harmonic potential),
    $\beta_j\in \R$,
    $A_j \in \R^{n\times n}$ positive
    definite symmetric matrices (Gaussian potential), and $ V_{\rm
      per}$ a smooth potential, periodic along some lattice in
    $\R^n$. 
  \end{example} 

The assumptions on the growth and smoothness of the potentials and the regularity of the initial data call for comments.  
  \begin{remark}
Concerning the growth of $V_1$, $V_2$ and $W$, the assumption that they are at most quadratic concerns the behavior at infinity and could be
    relaxed, up to suitable sign assumptions. Local behavior is rather free, for example a local double well is allowed, as long as it is not too confining at infinity. We choose to stick to
    the at most quadratic case, since bounded second order derivatives simplify the presentation.
\end{remark}

\begin{remark}
Concerning the smoothness of our potentials, most of our results still hold assuming only smoothness of $W$, as long as the operators $H_x$ and $H_y$ are essentially self-adjoint on an adequate domain included in $L^2(\R^N)$, with $N=n,d$. For example, $V_1$ and $V_2$ could both present Coulomb singularities, and the results of Proposition~\ref{prop:error-scales} would still hold.  In the semiclassical r\'egime, we can also allow a Coulomb singularity for $V_1$ and prove Proposition~\ref{prop:error-semi} and Proposition~\ref{prop:observables-semi}.
\end{remark}

\begin{remark}
Concerning the smoothness and the decay of the initial data, most of our results still hold, if the initial data are contained in one of the spaces $\Sigma^k(\R^{N})$ containing functions $f$ whose norms 
        \begin{equation}
          \label{eq:Sigmak}
          \| f\|_{\Sigma^k} = \sup_{{z\in \R^N}\atop{|\alpha|+|\beta|\le k}} \| z^{\alpha}\partial_z^{\beta} f\|_{L^2} 
        \end{equation}
 are bounded. Note that $\Sch(\R^N)= \cap_{k\in
      \N}\Sigma^k$. For example, Proposition~\ref{prop:error-scales} still holds for initial data in $\Sigma^1$, 
while Proposition~\ref{prop:error-semi} requires initial data in a semiclassically scaled $\Sigma^3$ space.
\end{remark}

\subsection{System-bath Hamiltonians}\label{sec:repa}
An important class of coupled quantum systems are described by
system-bath Hamiltonians \cite{Weiss:12,Breuer:02}.
\[
H_{\rm sb} = -\frac{1}{2}\Delta_x - \frac{1}{2}\Delta_y + 
V_{\rm s}(x) + V_{\rm b}(y) + V_{\rm sb}(x,y) 
\]
These are naturally given in the format required by \eqref{eq:ham}.
In the present discussion, we specify that the bath
is described by a harmonic oscillator, $V_{\rm b}(y) = \frac12 k_2^0 |y|^2$
(or a set of harmonic oscillators in more than one dimension) and the
system-bath coupling $V_{\rm sb}(x,y) = W(x,y)$ is of cubic form, such that we
obtain in the notation of \eqref{eq:ham}, 
\[
H_x = -\frac12\Delta_x + V_{\rm s}(x)\quad,\quad
H_y = -\frac12\Delta_y + \frac12 k_2^0 |y|^2\quad,\quad
W(x,y) = \frac12\vec\eta\cdot x|y|^2,
\]
where $k_2^0>0$ and $\vec\eta\in\R^d$. The cubic, anharmonic coupling $W(x,y)$
is a non-trivial case, which is employed, e.g., in the description of
vibrational dephasing \cite{Levine:88,Gruebele:04} and Fermi
resonances \cite{Bunker:06}.
It is natural to assume smoothness and subquadratic growth for $V_{\rm s}(x)$. However, 
the coupling potential $W(x,y)$ clearly fails to satisfy the growth condition of 
Assumption~\ref{hyp:potentials}. 
Moreover, it is not clear that in such a framework the total Hamiltonian~$H$ is
essentially self-adjoint. On the other hand, adding a quartic
confining potential,
\[
  H_y = -\frac12\Delta_y + \frac12 k_2^0 |y|^2 +
  \frac{1}{4}k_4^0|y|^4,\quad k_4^0>0,
\]
guarantees that $H$ is essentially self-adjoint. Indeed, Young's inequality for products 
yields
\[
  W(x,y) \ge -\frac{1}{4}\( \frac{1}{c_0} |\eta|^2|x|^2 +c_0|y|^4\),\quad
  \forall c_0>0,
\]
so choosing $c_0 = k_4^0$, we have, for the total potential
$V(x,y)=V_s(x)+V_b(y)+W(x,y)$,
\[
  V(x,y) \ge |V_{\rm s}(x)|  -\frac{|\eta|^2}{8k_4^0}|x|^2\ge -C_1 |x|^2-C_2,
\]
for some constants $C_1,C_2\ge 0$. Hence, the Faris-Lavine Theorem implies that $H_x,H_y$
and $H$ are essentially self-adjoint.
In the following, we will therefore also provide 
slight extensions of our estimates to accommodate this specific, but  
interesting type of coupling (see Remark~\ref{rem:repa}).

\section{Partially flat coupling}\label{sec:flat}

In this section, we present error estimates that reflect partial flatness properties of the coupling 
potential $W(x,y)$ in the sense, that  quantities like $\|\nabla_y
W\|_{L^\infty}$ or $\|\nabla_x\nabla_y
W\|_{L^\infty}$ are small. Depending on the regularity of the initial data, the smallness of these norms
could also be relaxed to the smallness of $\| \<x\>^{-\sigma_x}\langle
y\rangle ^{-\sigma_y} \nabla_y W \|_{L^\infty}$ for some
$\sigma_x,\sigma_y\ge 0$, see Example~\ref{ex:W}. We investigate two approximation strategies, 
one that is based on brute-force collocation, the other one on spatial averaging. 
In each case, we prove that the coupling in $(x,y)$ is negligible
at leading order with respect to $\nabla_yW$. 
Throughout this section, $\psi=\psi(t,x,y)$ denotes the solution to the initial value problem
\eqref{eq:sep-scales}--\eqref{eq:CI}.

\subsection{Brute-force approach}\label{subsec:brute}
We consider the uncoupled system of equations
\begin{equation}\label{eq:varphi}
  \begin{cases}
  i\d_t\varphi^x=H_x\varphi^x + W(x,0)\varphi^x\quad;\quad
   \varphi^x_{\mid t=0}= \varphi_0^x,\\
  i\d_t\varphi^y=H_y\varphi^y + W(0,y)\varphi^y\quad;\quad
  \varphi^y_{\mid t=0}= \varphi_0^y.\end{cases}
\end{equation}
In view of Assumption~\ref{hyp:potentials}, these
  equations have unique solutions $ \varphi^x\in C(\R;L^2_x)$,
  $\varphi^y\in C(\R;L^2_y)$, and higher regularity is propagated, $
  \varphi^x\in C(\R;\Sigma^k_x)$, 
  $\varphi^y\in C(\R;\Sigma^k_y)$, for all $k\in \N$, where we recall
  that $\Sigma^k$ has been defined in \eqref{eq:Sigmak}. The plain product solves 
\[
  i\d_t (\varphi^x\varphi^y) = H (\varphi^x\varphi^y)
  +\(-W(x,y)+W(x,0)+W(0,y)\)(\varphi^x\varphi^y). 
\]
This is not the right approximation, since the residual term is not small: Even if $W$ varies very little in $y$,
then $W(x,y)-W(x,0)-W(0,y)\approx W(x,0) -W(x,0)-W(0,0) = -W(0,0)$. 
This term is removed by considering instead 
\[
\psi_{\rm app}(t,x,y) = e^{itW(0,0)}\varphi^x(t,x)\varphi^y(t,y).
\] 
It satisfies the equation
\[
  i\d_t \psi_{\rm app} = H \psi_{\rm app}
  -\underbrace{\(W(x,y)-W_{\rm app}(x,y)\) \psi_{\rm app}}_{=:\Sigma_\psi} . 
\]
with approximative potential $W_{\rm app}(x,y) = W(x,0)+W(0,y)-W(0,0)$. 
The last term $\Sigma_\psi$ controls the error $\psi-\psi_{\rm app}$, as we will see
more precisely below. Saying that the coupling potential $W$ is flat
in $y$ means that $\nabla_y W$  is
small, and we write
\[
  W(x,y)-W_{\rm app}(x,y)=
  \underbrace{\(W(x,y)-W(x,0)\)}_{\approx y\cdot\nabla_y W(x,0)}-
                               \underbrace{\left(W(0,y)-W(0,0)\right)}_{\approx
                                y\cdot \nabla_y W(0,0)} .
  \]
  This suggests that partial flatness of $W$ implies smallness of the approximation error.  

  \begin{remark}\label{rem:collocation}
  For choosing another collocation point than the origin, one might use the matrix 
  \[M(x,y)= \d_x\d_yW(x,y) =\left( \partial_{x_j}\partial_{y_k} W(x,y)\right)_{1\le j\le n,\,1\le k\le d}.\]
   The Taylor expansion 
   \begin{align}\nonumber
  &  W(x,y) - W_{\rm app}(x,y) = W(x,y)-W(x,0)-W(0,y)+W(0,0) \\\nonumber
&= y\cdot \int_0^1 \left(\d_y W(x,\eta y)-\d_y W(0,\eta y)\right) d\eta\\
&=  \int_0^1\int_0^1 y\cdot \d_x\d_y W(\theta x,\eta y)\, x \ d\eta d\theta.\label{eq:taylor}
\end{align}
  implies for $(x,y) \approx (x_0,y_0)$ that
  \[
    W(x,y)-W(x,y_0)-W(x_0,y)+W(x_0,y_0) \approx    
    (x \cdot M(x_0,y_0)y ),
    \]
   which corresponds to the standard normal mode expansion. Hence, choosing $(x_0,y_0)$ such that the maximal singular value of $M(x_0,y_0)$ is minimized, we minimize the error of the brute-force approach.
  \end{remark}

\subsection{Mean-field approach}\label{subsec:conv}
Instead of pointwise evaluations of the coupling potential, we might also use partial averages 
for an approximation. We consider
\begin{eqnarray}\label{eq:phi}
\begin{cases}
   i\d_t\phi^x=H_x\phi^x + \<W\>_y (t)\,\phi^x
\quad;\quad
   \phi^x_{\mid t=0}= \varphi_0^x,\\
   i\d_t\phi^y=H_y\phi^y +\<W\>_x (t)\,\phi^y\quad;\quad
  \phi^y_{\mid t=0}= \varphi_0^y,\end{cases}
 \end{eqnarray}
where we have denoted
\begin{align*}
\<W\>_y=&  \<W\>_y(t,x) =   \frac{\int W(x,y)|\phi^y(t,y)|^2dy}{\int
   |\phi^y(t,y)|^2dy} =   \frac{\int W(x,y)|\phi^y(t,y)|^2dy}{\int
                 |\varphi_0^y(y)|^2dy},\\
\<W\>_x=&\<W\>_x(t,y)  =   \frac{\int W(x,y)|\phi^x(t,x)|^2dx}{\int
   |\phi^x(t,x)|^2dx} =   \frac{\int W(x,y)|\phi^x(t,x)|^2dx}{\int
                 |\varphi_0^x(x)|^2dx},
\end{align*}
where we have used the fact that the $L^2$-norms of $\phi^x$ and
$\phi^y$ are independent of time, since $W$ is real-valued.
Note that \eqref{eq:phi} is the nonlinear system of equations of the time-dependent Hartree approximation. Contrary to the brute-force approach, $L^2$ regularity does not
  suffice to define partial averages in general.
  In view of Assumption~\ref{hyp:potentials}, a fixed point argument
(very similar to the proof of e.g. \cite[Lemma~13.10]{CaBook2})
  shows that this
  system has a unique solution $ (\phi^x,\phi^y)\in
  C(\R;\Sigma^1_x\times \Sigma^1_y)$, and higher regularity is propagated, $
  \phi^x\in C(\R;\Sigma^k_x)$, 
  $\phi^y\in C(\R;\Sigma^k_y)$, for all $k\ge 2$. 
The approximate solution is then 
\[
\phi_{\rm app} (t,x,y)
=\phi^x(t,x)\,\phi^y(t,y)\, e^{i\int_0^t\<W\>ds},
\]
with the phase given by the full average
\[
\<W\> = \<W\>\!(t) =    
   \frac{\int W(x,y)|\phi^x(t,x)\phi^y(t,y)|^2dxdy}{\int
                 |\varphi_0^x(x)\varphi_0^y(y)|^2dxdy}.
\]
It solves the equation
\[
  i\d_t \phi_{\rm app} = H \phi_{\rm app} - \Sigma_\phi,
  \quad\Sigma_\phi:=  \(W -\<W\>_x -\<W\>_y + \<W\>\)  \phi_{\rm app} .
\]

\begin{remark}
   The correcting phase $e^{i\int_0^t\<W\>ds}$ seems to be
   crucial if we want to compute the   wave function. On the other
   hand, since it is a purely time dependent phase factor, it does not
   affect the usual quadratic observables. The same applies for the phase $e^{itW(0,0)}$ of the brute-force approximation. 
 \end{remark}

\subsection{Error estimates for  wave functions}
\label{sec:error-flat}

We begin with an approximation result at the level of $L^2$-norms
only. For its proof, see~\textcolor{black}{Section~\ref{sec:exproof}.}

\begin{proposition}\label{prop:error-scales}
  Under Assumption~\ref{hyp:potentials}, we have the following error
  estimates:

\smallskip\noindent 
Brute-force approach: for $\psi_{\rm app}(t,x,y)
    =\varphi^x(t,x)\varphi^y(t,y) e^{it W(0,0)}$ defined by \eqref{eq:varphi},
    \[
      \|\psi(t)-\psi_{\rm app}(t)\|_{L^2}\le
      \begin{cases}
         2\|\nabla_y W\|_{L^\infty
          }\|\varphi_0^x\|_{L^2_x}\int_0^t\|y\varphi^y(s)\|_{L^2_y}ds,\\
         \|\nabla_x \nabla_y W\|_{L^\infty}
          \int_0^t\|x\varphi^x(s)\|_{L^2_x}
        \|y\varphi^y(s)\|_{L^2_y}ds. \end{cases}
      \]

\noindent
Mean-field approach: for $\phi_{\rm app}(t,x,y)  =\phi^x(t,x)\phi^y(t,y)
    e^{i\int_0^t\<W\>ds}$ defined by  \eqref{eq:phi},
    \[
      \|\psi(t)-\phi_{\rm app}(t)\|_{L^2}\le
      \begin{cases}
         4\|\nabla_y W\|_{L^\infty  }
         \|\varphi_0^x\|_{L^2_x}\int_0^t\|y\phi^y(s)\|_{L^2_y}ds,\\
         4\|\nabla_x \nabla_y W\|_{L^\infty }
         \int_0^t\|x\phi^x(s)\|_{L^2_x}
        \|y\phi^y(s)\|_{L^2_y}ds. \end{cases}
      \]
\end{proposition}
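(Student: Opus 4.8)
The plan is to exploit that $H$ is essentially self-adjoint under Assumption~\ref{hyp:potentials}, so $e^{-itH}$ is unitary on $L^2$, and to treat both errors by Duhamel's formula applied to the source terms $\Sigma_\psi$ and $\Sigma_\phi$ constructed above. In the brute-force case, setting $e(t)=\psi(t)-\psi_{\rm app}(t)$ gives $i\d_t e = He + \Sigma_\psi$ with $e(0)=0$, hence $e(t)=-i\int_0^t e^{-i(t-s)H}\Sigma_\psi(s)\,ds$, and unitarity yields
\[
\|\psi(t)-\psi_{\rm app}(t)\|_{L^2}\le \int_0^t \|\Sigma_\psi(s)\|_{L^2}\,ds,
\]
with the identical reduction for $\Sigma_\phi$ in the mean-field case. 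Everything then reduces to pointwise bounds on the coupling residuals times the remaining factors, using that the collocation and Hartree potentials are real, so that $\|\varphi^x(s)\|_{L^2_x}=\|\phi^x(s)\|_{L^2_x}=\|\varphi_0^x\|_{L^2_x}$ and $\|\phi^y(s)\|_{L^2_y}=\|\varphi_0^y\|_{L^2_y}$ for all $s$.

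For the brute-force approach I would bound the residual factor $W-W_{\rm app}$ pointwise in the two advertised ways. Splitting $W(x,y)-W_{\rm app}=[W(x,y)-W(x,0)]-[W(0,y)-W(0,0)]$ and applying the fundamental theorem of calculus in $y$ gives $|W-W_{\rm app}|\le 2\|\nabla_y W\|_{L^\infty}|y|$, while the double Taylor expansion \eqref{eq:taylor} gives $|W-W_{\rm app}|\le\|\nabla_x\nabla_y W\|_{L^\infty}|x|\,|y|$. Since the phase $e^{itW(0,0)}$ has modulus one, $|\Sigma_\psi|=|W-W_{\rm app}|\,|\varphi^x|\,|\varphi^y|$, and factorizing the $L^2$-norm over $x$ and $y$ produces $\|\Sigma_\psi(s)\|_{L^2}\le 2\|\nabla_y W\|_{L^\infty}\|\varphi_0^x\|_{L^2_x}\|y\varphi^y(s)\|_{L^2_y}$ and $\|\Sigma_\psi(s)\|_{L^2}\le\|\nabla_x\nabla_y W\|_{L^\infty}\|x\varphi^x(s)\|_{L^2_x}\|y\varphi^y(s)\|_{L^2_y}$, which integrate to the stated brute-force bounds.

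The mean-field case is where I expect the main obstacle. Writing $d\mu^x=|\phi^x|^2\,dx/\|\varphi_0^x\|_{L^2_x}^2$ and $d\mu^y=|\phi^y|^2\,dy/\|\varphi_0^y\|_{L^2_y}^2$ as probability measures, the crucial point is the algebraic identity
\[
W-\<W\>_x-\<W\>_y+\<W\> = \int\!\!\int \delta W(x,x',y,y')\,d\mu^x(x')\,d\mu^y(y'),
\]
which holds because each of the four averages is the integral of the corresponding summand of $\delta W$ against the product measure, using that $\mu^x,\mu^y$ have total mass one. Thus the mean-field fluctuation is an average of $\delta W$, and one bounds $\delta W$ by the very same flatness quantities, $|\delta W|\le 2\|\nabla_y W\|_{L^\infty}|y-y'|$ and $|\delta W|\le\|\nabla_x\nabla_y W\|_{L^\infty}|x-x'|\,|y-y'|$.

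The extra factor relative to the brute-force constants comes from controlling first moments of the measures. By the triangle inequality $\int|y-y'|\,d\mu^y(y')\le |y|+\int|y'|\,d\mu^y(y')$, and by Cauchy--Schwarz $\|\phi^y\|_{L^2_y}\int|y'|\,d\mu^y(y')\le\|y\phi^y\|_{L^2_y}$, whence $\big\|\big(\int|y-y'|\,d\mu^y(y')\big)\phi^y\big\|_{L^2_y}\le 2\|y\phi^y\|_{L^2_y}$, and symmetrically in $x$. Combining the averaging identity, the $\delta W$ bounds, and this moment step with the factorization of the $L^2$-norm produces the constant $4$ in both mean-field estimates after integration. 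I expect the delicate points to be establishing the averaging identity cleanly and tracking the Cauchy--Schwarz step that converts averaged moments such as $\int|y-y'|\,d\mu^y(y')$ into the stated weights $\|y\phi^y\|_{L^2_y}$ without destroying the factorized $x$--$y$ structure.
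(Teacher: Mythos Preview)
Your proposal is correct and follows essentially the same approach as the paper: Duhamel/energy estimate via unitarity of $e^{-itH}$, pointwise bounds on $W-W_{\rm app}$ via the Taylor expansion \eqref{eq:taylor} for the brute-force case, and the averaging identity for $\delta W$ plus Cauchy--Schwarz on the first moments in the mean-field case. The only cosmetic difference is that in the mean-field moment step the paper squares and uses Young's inequality $(\alpha+\beta)^2\le 2(\alpha^2+\beta^2)$, whereas you apply the triangle inequality directly in $L^2_y$; both routes yield the constant~$4$.
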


We see that the smallness of $\|\nabla_y   W\|_{L^\infty}$ controls the error between the exact and the approximate solution in both approaches.

 \begin{example}
An important class of examples consists of those where $W$ is
  slowly varying in~$y$: $W(x,y)={\rm  w}(x,\eta y)$ with $\eta\ll 1$
  and $\rm w$ bounded, as well as its derivatives. In that case 
  $\| \nabla _y W\|_{L^\infty} =\eta \|\nabla_y  {\rm
    w}\|_{L^\infty}$. 
\end{example}
\begin{example}\label{ex:W}
 In the case $W(x,y)=W_1(x)W_2(y)$, the averaged potentials satisfy
  \[
\<W\>_y(t,x) = W_1(x)\<W_2\>_y(t),\quad \<W\>_x(t,y) = \<W_1\>_x(t)W_2(y),
\]
with
\begin{align*}\nonumber
&\<W_2\>=  \<W_2\>_y(t) =   \frac{\int W_2(y)|\phi^y(t,y)|^2dy}{\int
   |\phi^y(t,y)|^2dy} =   \frac{\int W_2(y)|\phi^y(t,y)|^2dy}{\int
                 |\varphi_0^y(y)|^2dy},\\\nonumber
&\<W_1\>=\<W_1\>_x(t)  =   \frac{\int W_1(x)|\phi^x(t,x)|^2dx}{\int
   |\phi^x(t,x)|^2dx} =   \frac{\int W_1(x)|\phi^x(t,x)|^2dx}{\int
                 |\varphi_0^x(x)|^2dx}.
\end{align*}
In this special product case, the crucial source term takes the form
\[
\Sigma_\phi(t) = (W_1-\<W_1\>_x(t))\ (W_2-\<W_2\>_y(t))\ \phi_{\rm app}(t),
\]
and Proposition~\ref{prop:error-scales} can be 
augmented by the gradient-free estimate
\begin{align}\label{eq:gfree}
&\|\psi(t)-\phi_{\rm app}(t)\|_{L^2}\le \\
&\|\varphi_0^x\|_{L^2_x}\|\varphi_0^y\|_{L^2_y}\int_0^t \sqrt{\left(\<W_1^2\>_x(s)-\<W_1\>_x^2(s)\right)
\left(\<W_2^2\>_y(s)-\<W_2\>_y^2(s)\right)} ds.\nonumber
\end{align}
The $L^\infty$-norms, that provide the upper bounds in Proposition~\ref{prop:error-scales}, separate as
  \begin{align*}\nonumber
&    \|\nabla_y W\|_{L^\infty}= \| W_1\|_{L^\infty_x
                  } \|\nabla_y W_2\|_{L^\infty_y},\\\nonumber
    &\|\nabla_x\nabla_y W\|_{L^\infty }= \|\nabla_x W_1\|_{L^\infty_x}
      \| \nabla_y W_2\|_{L^\infty _y},
  \end{align*}
  and it is $\|\nabla_y W_2\|_{L^\infty}$ that controls the estimates.
  Suppose we have $W_2(y)=\eta
|y|^2$ with $\eta$ small: $\nabla W_2$ is not
  bounded, but we can adapt the proof of Proposition~\ref{prop:error-scales} to get
  \[
   \|\psi(t)-\phi_{\rm app}(t)\|_{L^2}\le 16\eta 
 \|W_1\|_{L^\infty_x}\|\varphi_0^x\|_{L^2_x}\int_0^t\||y|^2\phi^y(s)\|_{L^2_y}ds,
\]
that is, the extra power of $y$ is transferred to the $\phi^y$ term.
\end{example} 
\textcolor{black}{
\begin{remark}\label{rem:sigmaxy}
In the spirit of the last observations of Example~\ref{ex:W}, in terms of $\eta:=\| \<x\>^{-\sigma_x}\langle
  y\rangle ^{-\sigma_y}\nabla _y 
W\|_{L^\infty}$, for some $\sigma_x,\sigma_y\ge 0$, we 
get in Proposition~\ref{prop:error-scales}: 
 \[
   \|\psi(t)-\phi_{\rm app}(t)\|_{L^2}\le 
   8 \eta\int_0^t\|\<x\>^{\sigma_x}\phi^x(s)\|_{L^2_x}\| \langle
 y\rangle^{\sigma_y}|y|\phi^y(s)\|_{L^2_y}ds. 
\]
See~\ref{sec:error-scales} for details of the argument. 
\end{remark}
}

\begin{remark}\label{rem:V-momenta}
  If $V_1$
is confining, $V_1(x) \gtrsim |x|^2$ for $|x|\ge R$ (for instance,
$V_1(x)= c|x|^{2k}$, $c>0$ and $k$ a positive integer, a typical case
where $V_1$ may be super-quadratic while $H_x$ and
$H$ remain self-adjoint), then we can
estimate $\|x\phi^x\|_{L^2_x}$ uniformly in time. If $V_1=0$, or more
generally if $V_1(x)\to 0$ as $|x|\to \infty$, we must expect some
linear growth in time
$\|x\phi^x(t)\|_{L^2_x}\lesssim \<t\>$,
and the order of magnitude in $t$ is sharp, corresponding to a
dispersive phenomenon. 
\end{remark}

\begin{remark}\label{rem:repa}
 The framework of a cubic system-bath coupling $W(x,y) = \frac12\vec\eta\cdot x|y|^2$ as described in  Section~\ref{sec:repa} is recovered by taking 
$\sigma_x=\sigma_y=1$ in Example~\ref{ex:W}. In addition, in the presence of a quartic confinement with $k_4^0>0$, in view of
Remark~\ref{rem:V-momenta}, we also know that $\|\langle
 y\rangle|y|\phi^y(t)\|_{L^2_y}$ is bounded uniformly in $t$. 
\end{remark}

Adding control on the gradients of the functions $\varphi^x(t),\varphi^y(t)$ respectively 
$\phi^x(t),\phi^y(t)$, allows 
also error estimates at the level of the kinetic energy. For a proof, 
see~\ref{sec:proof-error-scales-energy}.

 \begin{proposition}\label{prop:error-scales-energy}
      Under Assumption~\ref{hyp:potentials}, there exists a constant
      $C>0$ depending on second order derivatives of the potentials such that we have the following error
  estimates:

\medskip\noindent
Brute-force approach: for $\psi_{\rm app}(t,x,y)
    =\varphi^x(t,x)\varphi^y(t,y) e^{it W(0,0)}$ defined by \eqref{eq:varphi},
    \begin{eqnarray}\nonumber
   \|\nabla_x\psi(t)-\nabla_x\psi_{\rm app}(t)\|_{L^2} +
   \|x\psi(t)-x\psi_{\rm app}(t)\|_{L^2} 
      \le C \|\nabla_x \nabla_y
      W\|_{L^\infty}\times\\\nonumber
\times\int_0^t e^{Cs}
\|y\varphi^y(s)\|_{L^2_y}\(\|x\varphi^x(s)\|_{L^2_x}+
        \|\nabla_x\varphi^x(s)\|_{L^2_x}+\||x|\nabla_x\varphi^x(s)\|_{L^2_x}\)
        ds,
        \end{eqnarray}
        and
        \begin{eqnarray}\nonumber
 \|\nabla_y\psi(t)-\nabla_y\psi_{\rm app}(t)\|_{L^2} +
   \|y\psi(t)-y\psi_{\rm app}(t)\|_{L^2}  \le C \|\nabla_x \nabla_y
      W\|_{L^\infty}\times\\\nonumber
\times\int_0^t e^{Cs}
\|x\varphi^x(s)\|_{L^2_x}\(\|y\varphi^y(s)\|_{L^2_y}+
        \|\nabla_y\varphi^y(s)\|_{L^2_y}+\||y|\nabla_y\varphi^y(s)\|_{L^2_y}\)
        ds.
    \end{eqnarray}
    
\noindent    
Mean-field approach: for $\phi_{\rm app}(t,x,y)  =\phi^x(t,x)\phi^y(t,y)
    e^{i\int_0^t\<W\>ds}$ defined by \eqref{eq:phi}, analogous estimates for $\psi(t)-\phi_{\rm
    app}(t)$ hold.
  \end{proposition}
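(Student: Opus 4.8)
The plan is to run a weighted energy estimate on the error $w:=\psi-\psi_{\rm app}$. By construction $w$ solves the inhomogeneous equation $i\d_t w = Hw + \Sigma_\psi$ with $w_{\mid t=0}=0$, where $\Sigma_\psi=(W-W_{\rm app})\psi_{\rm app}$ is the source already isolated in Section~\ref{subsec:brute}. Proposition~\ref{prop:error-scales} controls $\|w(t)\|_{L^2}$ through $\int_0^t\|\Sigma_\psi(s)\|_{L^2}ds$; here I want the analogue after applying the first-order weights $\nabla_x,x$ (and, for the second estimate, $\nabla_y,y$). First I would record the commutators with $H=-\tfrac12\Delta_x-\tfrac12\Delta_y+V$: since $\nabla_x$ and $x$ commute with both Laplacians, $[\nabla_x,H]=\nabla_x V$ and $[x,H]=\nabla_x$, and by Assumption~\ref{hyp:potentials} the gradient $\nabla_x V=\nabla_x V_1+\nabla_x W$ grows at most linearly, so that $\|(\nabla_x V)w\|_{L^2}\lesssim \|w\|_{L^2}+\|xw\|_{L^2}+\|yw\|_{L^2}$, with the implicit constant governed by the second derivatives of the potentials.

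Next I would differentiate the squared weighted norms in time. Because $H$ is self-adjoint, $\Re\<u,-iHu\>=0$, so the principal part drops out and only commutator and source contributions remain, for instance $\tfrac{d}{dt}\|\nabla_x w\|_{L^2}^2=2\Re\<\nabla_x w,-i((\nabla_x V)w+\nabla_x\Sigma_\psi)\>$ and $\tfrac{d}{dt}\|xw\|_{L^2}^2=2\Re\<xw,-i(\nabla_x w+x\Sigma_\psi)\>$. Collecting $\|\nabla_x w\|$, $\|xw\|$, $\|\nabla_y w\|$, $\|yw\|$ into one first-order energy $N(t)$ and applying Cauchy--Schwarz, all commutator terms are bounded by $CN(t)$, while the $\Sigma_\psi$-derivatives form a genuine inhomogeneity $S(t)$; this yields a differential inequality $\tfrac{d}{dt}N\le CN+S$. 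Integrating by Grönwall from the vanishing initial datum produces the exponential factor inside the time integral and an $L^2$ control of the first-order energy. The two statements of the proposition are then read off by grouping the source contributions into their $x$-weighted and $y$-weighted parts.

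The source terms $\|x\Sigma_\psi\|$, $\|\nabla_x\Sigma_\psi\|$ (and the $y$-analogues) are handled with the double Taylor expansion \eqref{eq:taylor}, which gives both $|W-W_{\rm app}|\le \|\nabla_x\nabla_y W\|_{L^\infty}|x||y|$ and, after differentiating in $x$, $|\nabla_x(W-W_{\rm app})|\le \|\nabla_x\nabla_y W\|_{L^\infty}|y|$. Since $\psi_{\rm app}=\varphi^x\varphi^y e^{itW(0,0)}$ factorizes, inserting these into $\nabla_x\Sigma_\psi=(\nabla_x(W-W_{\rm app}))\psi_{\rm app}+(W-W_{\rm app})\nabla_x\psi_{\rm app}$ converts each weighted source norm into $\|\nabla_x\nabla_y W\|_{L^\infty}$ times a product of the moment $\|y\varphi^y\|_{L^2_y}$ with a low-order moment of $\varphi^x$; the conservation of $\|\varphi^x\|_{L^2}$ together with the uncertainty inequality $\|\varphi^x\|_{L^2}\lesssim\|x\varphi^x\|_{L^2_x}+\|\nabla_x\varphi^x\|_{L^2_x}$ lets one absorb the bare $L^2$ factor into the three moments $\|x\varphi^x\|$, $\|\nabla_x\varphi^x\|$, $\||x|\nabla_x\varphi^x\|$ appearing in the statement (symmetrically for the $y$-estimate).

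Finally, for the mean-field approximation the argument is verbatim the same with $\phi^x,\phi^y,\phi_{\rm app}$ from \eqref{eq:phi} in place of $\varphi^x,\varphi^y,\psi_{\rm app}$ and with the centred source $\Sigma_\phi=(W-\<W\>_x-\<W\>_y+\<W\>)\phi_{\rm app}$ in place of $\Sigma_\psi$; its weighted norms are estimated by the same Taylor-type bounds combined with the averaging identities, producing the larger numerical constant already seen in Proposition~\ref{prop:error-scales}. The main obstacle I anticipate is closing the Grönwall system: because $[x,H]=\nabla_x$ and $[\nabla_x,H]=\nabla_x V$ mix momentum weights with the linearly growing position weights across both variable groups, none of $\|\nabla_x w\|$, $\|xw\|$, $\|\nabla_y w\|$, $\|yw\|$ can be estimated in isolation, and the delicate point is to verify that every commutator contribution is genuinely absorbed into $CN(t)$ so that only the $\Sigma_\psi$-derived moments survive as the inhomogeneity of the stated bounds.
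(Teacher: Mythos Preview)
Your approach is essentially the paper's: apply the weights $\nabla_x, x$ (and then $\nabla_y, y$) to the error equation $i\d_t r_\psi = Hr_\psi + \Sigma_\psi$, compute the commutators $[\nabla_x,H]=\nabla_x V$ and $[x,H]=\nabla_x$, run an energy/Gronwall argument, and estimate the weighted sources $\nabla_x\Sigma_\psi$, $x\Sigma_\psi$ via the Taylor expansion~\eqref{eq:taylor}. The paper also uses the uncertainty principle $\|f\|_{L^2}^2\le\frac{2}{n}\|\nabla_x f\|_{L^2}\|xf\|_{L^2}$ to absorb the zeroth-order contribution, just as you suggest.

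The one structural difference is that the paper closes the pair $(\|\nabla_x r_\psi\|, \|xr_\psi\|)$ \emph{by itself}---invoking the bound $|\nabla_x V_1+\nabla_x W|\le C(1+|x|)$---and thereby obtains the first displayed estimate without bringing in the $y$-weighted quantities; the second estimate then follows by symmetry. Your plan instead assembles all four weighted norms into a single $N(t)$. That is still correct, but after Gronwall the inhomogeneity $S(t)$ carries the weighted sources from \emph{both} variable groups, so you cannot simply ``read off'' the two separate displayed inequalities as stated---each would inherit the full $S$. The difficulty you flag in your last paragraph is exactly the point where the paper's decoupling replaces your joint Gronwall.
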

  
  \begin{remark}
The strategy used to prove Proposition~\ref{prop:error-scales-energy} can be iterated to infer error estimates
in Sobolev spaces of higher order, $H^k(\R^{n+d})$ for $k\ge 2$,
provided that we consider momenta of the same order $k$, which explains the interest in the functional spaces $\Sigma^k$. Error estimates
in such spaces can also be obtained by first proving that $\psi$ and the
approximate solution(s) remain in $\Sigma^k$, 
and
then interpolating with the $L^2$  error estimate from
Proposition~\ref{prop:error-scales}. 
\end{remark}

\subsection{Error estimates for quadratic
  observables}\label{sec:error-quad}
For obtaining quadratic estimates, we 
consider observables such as the energy or the momenta, that is, operators that are differential operators of order at most~$2$ with bounded smooth coefficients.  These 
 differential operators have their domain in $H^2(\R^{n+d})$, as the operator $H$. More generally, we could consider pseudo-differential operators $B={\rm op}(b)$ associated with a smooth real-valued function $b=b(Z)$ with $Z=(z,\zeta)\in \R^{2(n+d)}$, whose action on functions $f\in \mathcal S(\R^{n+d})$ is given by
\[{\rm op}(b) f(z)= (2\pi)^{-(n+d)} \int_{\R^{2(n+d)}} 
b\left(\frac {z+z'}{2} ,\zeta\right) e^{i\zeta\cdot (z-z')} f(z') d\zeta dz'.\]
We assume that $b$ satisfies the H\"ormander condition 
\begin{equation}\label{def:observables}
\forall \alpha,\beta \in \N^{n+d},\;\;\exists C_{\alpha,\beta } >0,\;\;|\partial_z^\beta \partial^\alpha_\zeta b(z,\zeta)|\le C_{\alpha,\beta} \langle  \zeta\rangle^{2-|\alpha|},
\end{equation}
that is, $b$ is a symbol of order $2$, see e.g. \cite[Chapter I.2]{AliGer}.  
We shall also consider observables that  depend only on the variable~$x$ or the variable ~$y$. 
The following estimates are proven in~\ref{sec:proof-observables}.

\begin{proposition}\label{prop:observables}
  Under Assumption~\ref{hyp:potentials}, for  $b\in C^\infty(\R^{n+d})$ satisfying~\ref{def:observables} and $B={\rm op}(b)$,
there exists a constant
      $C_{b}>0$ such that we have the following error
  estimates:
  
  \medskip\noindent
   Brute-force approach: for $\psi_{\rm app}(t,x,y)
    =\varphi^x(t,x)\varphi^y(t,y) e^{it W(0,0)}$, defined by \eqref{eq:varphi}, the error
    \[
    e_\psi(t) = \< \psi(t),B\psi(t)\> - \< \psi_{\rm app}(t),B\psi_{\rm app}(t)\>
    \]
    satisfies
    \[
    \left| e_\psi(t) \right|
     \le C_{b}  \sup_{|\beta|\le N_{n+d}}\|\nabla^\beta M\|_{L^\infty} \| \psi_0\|_{L^2} 
     \left({\mathcal N}(\psi_{\rm app}) + t\|\psi_0\|_{L^2}\right),
    \]
    where $N_{n+d}>0$ depends on $n+d$, $M(x,y) = \partial_x\partial_y W(x,y)$, while 
    \begin{align*}\nonumber
    {\mathcal N}(\psi_{\rm app}) &= \| \varphi^x_0\|_{L^2_x} \int_0^t \| y\varphi^y(s)\|_{L^2_y} ds +
        \| \varphi^y_0\|_{L^2_y} \int_0^t \| x\varphi^x(s)\|_{L^2_x} ds\\\nonumber
        &\quad+\| \varphi^x_0\|_{L^2_x} \int_0^t \| \nabla(y\varphi^y(s))\|_{L^2_y} ds +
        \| \varphi^y_0\|_{L^2_y} \int_0^t \|\nabla(x\varphi^x(s))\|_{L^2_x} ds\\\nonumber
        &\quad+\int_0^t \|x\varphi^x(s)\|_{L^2_x}\|\nabla\varphi^y(s)\|_{L^2_y} ds + 
        \int_0^t \|\nabla\varphi^x(s)\|_{L^2_x}\|y\varphi^y(s)\|_{L^2_y} ds.
    \end{align*}
    
    \noindent
    Mean-field approach: for $\phi_{\rm app}(t,x,y)  =\phi^x(t,x)\phi^y(t,y)
    e^{i\int_0^t\<W\>ds}$ 
  defined by  \eqref{eq:phi}, the error $\langle \psi(t) ,B\psi(t) \rangle   - \langle \phi_{\rm app}(t) ,B\phi_{\rm app}(t) \rangle $ satisfies a similar estimate.
\end{proposition}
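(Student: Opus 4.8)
The plan is to avoid estimating the wave function directly and instead to derive an \emph{exact} integral identity for $e_\psi(t)$ in which only the approximate solution and the source $\Sigma_\psi$ enter. First I would introduce the exact propagator $U(\tau)=e^{-i\tau H}$ and the interpolating family $\Phi(s)=U(t-s)\psi_{\rm app}(s)$, which satisfies $\Phi(0)=U(t)\psi_0=\psi(t)$ and $\Phi(t)=\psi_{\rm app}(t)$, so that
\[
e_\psi(t)=\langle\Phi(0),B\Phi(0)\rangle-\langle\Phi(t),B\Phi(t)\rangle=-\int_0^t\tfrac{d}{ds}\langle\Phi(s),B\Phi(s)\rangle\,ds .
\]
The point of this construction is the cancellation $\dot\Phi(s)=iU(t-s)\Sigma_\psi(s)$: the free propagation of $\psi_{\rm app}$ is exactly undone by the backward propagator, so $\Phi$ moves only through the source. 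Writing $B(\tau)=U(\tau)^\ast BU(\tau)$ for the Heisenberg-evolved observable, this yields
\[
e_\psi(t)=-2\int_0^t\Im\,\langle\Sigma_\psi(s),B(t-s)\psi_{\rm app}(s)\rangle\,ds .
\]

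Second, I would exploit that $\Sigma_\psi=(W-W_{\rm app})\psi_{\rm app}$ with $g:=W-W_{\rm app}$ a \emph{real} multiplication operator. Since $g$ and $B(t-s)$ are both self-adjoint, $2\Im\langle g\psi_{\rm app},B(t-s)\psi_{\rm app}\rangle=\tfrac1i\langle\psi_{\rm app},[g,B(t-s)]\psi_{\rm app}\rangle$, which turns the identity into
\[
e_\psi(t)=i\int_0^t\big\langle\psi_{\rm app}(s),\,[\,g,\,B(t-s)\,]\,\psi_{\rm app}(s)\big\rangle\,ds .
\]
This is the crucial step: the commutator $[g,B(t-s)]$ is a pseudodifferential operator of order $1$, one order lower than $B$, so we have effectively gained a derivative. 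This is precisely what upgrades the estimate from the single derivative $\nabla_y W$ that a naive $\|r\|$-based bound would produce to the two derivatives carried by $M=\d_x\d_y W$. Its leading symbol is proportional to $\nabla_z g\cdot\nabla_\zeta c_{t-s}$, where $c_{t-s}$ is the symbol of $B(t-s)$, and by the Taylor identity \eqref{eq:taylor} the gradients factorise, $\nabla_x g=\int_0^1 y\cdot\d_y\nabla_x W(x,\eta y)\,d\eta$ and $\nabla_y g=\int_0^1 x\cdot\d_x\nabla_y W(\theta x,y)\,d\theta$, so each component of $\nabla_z g$ is $M$ times a \emph{single} position variable. This simultaneously produces the factor $M$ and the single-moment structure of $\mathcal N(\psi_{\rm app})$, in contrast to the two-moment product $\|x\varphi^x\|\,\|y\varphi^y\|$ that the norm estimate of Proposition~\ref{prop:error-scales} would give.

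Third, I would control $B(t-s)$ by an Egorov-type argument: under Assumption~\ref{hyp:potentials} the flow of the at-most-quadratic Hamiltonian is globally defined and $B(\tau)$ stays a symbol of order $2$ whose relevant seminorms grow at most polynomially in $\tau$, with only finitely many $\zeta$-derivatives entering through the Calder\'on--Vaillancourt $L^2$-bound; this is the source of the constant $C_b$ and of the index $N_{n+d}$, dictated by the dimension $n+d$. Expanding the commutator, the principal term contributes one copy of $\|\psi_{\rm app}\|_{L^2}=\|\psi_0\|_{L^2}$ (the $L^2$-norm being conserved), a position moment from $\nabla_z g$, and a gradient of $\psi_{\rm app}$ from the order-$1$ factor $\nabla_\zeta c_{t-s}$; distributing these by Cauchy--Schwarz reproduces the six terms of $\mathcal N(\psi_{\rm app})$, the pairing of $\nabla_y g\sim xM$ with $\d_{\zeta_y}c$ giving $\int_0^t\|x\varphi^x\|\,\|\nabla\varphi^y\|$ and symmetrically $\int_0^t\|\nabla\varphi^x\|\,\|y\varphi^y\|$. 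The subleading commutator term involves the mixed second derivative $\nabla_x\nabla_y g=\nabla_x\nabla_y W=M$, which carries \emph{no} position growth, hence is a genuinely bounded operator of norm $\lesssim\sup_{|\beta|\le N_{n+d}}\|\nabla^\beta M\|_{L^\infty}$, and contributes the remaining additive term $t\,\|\psi_0\|_{L^2}^2$ after integration in $s$.

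I expect the main obstacle to be this Egorov step: proving that $B(t-s)$ is a well-controlled order-$2$ operator with time-polynomial symbol bounds in the non-compact, merely subquadratic setting, and justifying the commutator expansion when it is paired against the \emph{unbounded}, linearly growing factor $\nabla_z g$. Both difficulties are resolved by the $\Sigma^k$-regularity of $\psi_{\rm app}$ propagated by \eqref{eq:varphi}, which supplies the momenta needed to make every pairing with a power of $x$ or $y$ finite and the symbolic remainders integrable. Finally, the mean-field case is entirely parallel: the source $\Sigma_\phi$ is again $g_{\rm mf}\,\phi_{\rm app}$ with $g_{\rm mf}=W-\langle W\rangle_x-\langle W\rangle_y+\langle W\rangle$ real, so the same identity holds with $B(t-s)$ unchanged; here $\nabla_x g_{\rm mf}=\int(\nabla_xW(x,y)-\nabla_xW(x,y'))\,|\phi^y(y')|^2\,dy'\big/\|\phi^y\|_{L^2_y}^2$ factorises through $M$ by a Taylor expansion in $y-y'$, and the estimate follows with $\varphi^{x},\varphi^{y}$ replaced by $\phi^{x},\phi^{y}$.
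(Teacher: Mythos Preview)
Your proposal is correct and follows essentially the same route as the paper. Your interpolation $\Phi(s)=U(t-s)\psi_{\rm app}(s)$ reproduces exactly the content of the paper's Lemma~\ref{lem:quad-estimate} (whose proof is precisely this Duhamel argument), leading to the same commutator identity $e_\psi(t)=i\int_0^t\langle\psi_{\rm app}(s),[\delta W,B(t-s)]\psi_{\rm app}(s)\rangle\,ds$ with $B(\sigma)=e^{i\sigma H}Be^{-i\sigma H}$; the paper then invokes Egorov to keep $B(\sigma)$ in the symbol class and proves a dedicated commutator lemma (Lemma~\ref{lem:comm}) that formalises your ``leading/subleading'' splitting, after which $\nabla(\delta W)$ is rewritten via $M$ by the same Taylor identity you use.
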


\begin{remark} 
The averaging process involved in the action of an observable on a wave function allows to prove 
estimates like the one in Proposition~\ref{prop:observables}, that are more precise than the standard ones 
stemming from norm estimates, 
\[
|e_\psi(t)| \le \|\psi(t)-\psi_{\rm app}(t)\|_{L^2} \left( \|B\psi(t)\|_{L^2} + \|B\psi_{\rm app}(t)\|_{L^2}\right).
\]
\end{remark}

\begin{remark}
We point out  that the error is governed by  derivatives of second
order in~$W$, involving a derivative in the $y$ variable that is
supposed to be small.  Besides, note that the direct use of an estimate on the
wave function itself would have involved $H^2$ norms of  $
\psi_{\rm app}(s) $, while this estimate only requires $H^1$ norms. This first
improvement is due to the averaging process present in  Egorov Theorem. 
 \end{remark}

\subsection{Energy conservation}\label{subsec:energy}

The error estimates of Proposition~\ref{prop:error-scales}, 
Proposition~\ref{prop:error-scales-energy}, and Proposition~\ref{prop:observables}, do not allow to 
distinguish between the brute-force single point collocation and the mean-field Hartree approach. 
However, in computational practice most of the employed methods are of mean-field type. Why? 
Our previous analysis, that specifically addresses the coupling of quantum systems, does not allow for an 
answer, and we resort to a more general point of view. Both approximations, the brute-force and the mean-field one, are norm-conserving. However, the mean-field approach is energy-conserving with the same 
energy as \ref{eq:sep-scales}. At first sight, this is surprising, since the mean-field Hamiltonian~$H_{\rm mf}(t)$
depends on time. In a more general framework, where the time-dependent Hartree approximation 
is considered as application of the time-dependent Dirac--Frenkel variational principle on the 
manifold of product functions, energy conservation is immediate, see 
\cite[\S3.2]{Lu05} or \cite[Chapter~II.1.5]{LubichBlue}.  

\begin{lemma}\label{lem:energy}
Under Assumption~\ref{hyp:potentials} and considering the mean-field approach 
$\phi_{\rm app}(t,x,y)  =\phi^x(t,x)\phi^y(t,y)
    e^{i\int_0^t\<W\>(s)ds}$ defined by \eqref{eq:phi}, we have
\[
\<\phi_{\rm app}(t),H_{\rm mf}(t)\phi_{\rm app}(t)\> =
\<\psi_0,H\psi_0\>\quad\ {\rm for\ all}\ t\ge 0,
\] 
where the mean-field Hamiltonian is given by 
\[
H_{\rm mf}(t) = H_x + H_y + \<W\>_y(t) +  \<W\>_x(t) - \<W\>(t).
\]
\end{lemma}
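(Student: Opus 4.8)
The plan is to prove that $E(t):=\langle\phi_{\rm app}(t),H_{\rm mf}(t)\phi_{\rm app}(t)\rangle$ is independent of $t$ and then to match it with $\langle\psi_0,H\psi_0\rangle$ at $t=0$. The purely time-dependent phase $e^{i\int_0^t\langle W\rangle ds}$ is a global unimodular factor, so it cancels in the quadratic form and I may work with the bare product $u(t):=\phi^x(t)\phi^y(t)$. Normalising the initial data, $\|\varphi_0^x\|_{L^2_x}=\|\varphi_0^y\|_{L^2_y}=1$ (the general case only carries these time-independent norms as constants), the $L^2$-norms of $\phi^x(t)$ and $\phi^y(t)$ stay equal to $1$ since $W$ is real. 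Using the definitions of the partial and full averages, each of the three potential pieces of $H_{\rm mf}$ contributes, after integrating against $|u|^2$, the same interaction energy $\langle W\rangle(t)$; the subtraction of $\langle W\rangle$ is exactly what removes the double counting, so $\langle u,(\langle W\rangle_y+\langle W\rangle_x-\langle W\rangle)u\rangle=\langle W\rangle(t)$. Together with the tensor factorisation of the kinetic terms this gives
\[
E(t)=\langle\phi^x(t),H_x\phi^x(t)\rangle+\langle\phi^y(t),H_y\phi^y(t)\rangle+\langle W\rangle(t).
\]

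At $t=0$ the phase equals $1$ and $\phi^{x}(0)=\varphi_0^x$, $\phi^{y}(0)=\varphi_0^y$, so applying the same factorisation to $\langle\psi_0,(H_x+H_y+W)\psi_0\rangle$ yields $E(0)=\langle\psi_0,H\psi_0\rangle$. It therefore remains to show $E'(t)\equiv 0$. For each subsystem I would invoke the standard energy-balance identity for a Schr\"odinger flow with self-adjoint time-dependent generator: writing $\tilde H_x(t):=H_x+\langle W\rangle_y(t)$, the equation $i\partial_t\phi^x=\tilde H_x\phi^x$ gives
\[
\frac{d}{dt}\langle\phi^x,\tilde H_x\phi^x\rangle=\langle\phi^x,(\partial_t\langle W\rangle_y)\phi^x\rangle,
\]
the dynamical contributions $i\|\tilde H_x\phi^x\|^2-i\langle\phi^x,\tilde H_x^2\phi^x\rangle$ cancelling by self-adjointness, and analogously for $\phi^y$.

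The last step is bookkeeping. Writing $A:=\int\int W\,(\partial_t|\phi^x|^2)\,|\phi^y|^2$ and $B:=\int\int W\,|\phi^x|^2\,(\partial_t|\phi^y|^2)$, the product rule gives $\frac{d}{dt}\langle W\rangle=A+B$, while the averaged derivatives above are precisely $\langle\phi^x,(\partial_t\langle W\rangle_y)\phi^x\rangle=B$ and $\langle\phi^y,(\partial_t\langle W\rangle_x)\phi^y\rangle=A$. Since $\langle\phi^x,\tilde H_x\phi^x\rangle=\langle\phi^x,H_x\phi^x\rangle+\langle W\rangle$, subtracting $\frac{d}{dt}\langle W\rangle$ isolates $\frac{d}{dt}\langle\phi^x,H_x\phi^x\rangle=B-(A+B)=-A$, and likewise $\frac{d}{dt}\langle\phi^y,H_y\phi^y\rangle=-B$. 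Summing the three contributions,
\[
E'(t)=-A-B+(A+B)=0.
\]
Conceptually, the kinetic energy of each subsystem changes exactly to offset the flow of interaction energy into the other, which is the mechanism behind energy conservation.

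The step I expect to be the main obstacle is not the algebra but its rigorous justification: differentiating the expectation values of the unbounded operators $H_x,H_y,\tilde H_x$ along the flow and validating the self-adjointness cancellation require that all the quantities $\langle\phi^\bullet,H_\bullet\phi^\bullet\rangle$, $\|\tilde H_\bullet\phi^\bullet\|$ and their time derivatives be finite, and that differentiation under the integral sign be licit. This is where I would appeal to the propagation of $\Sigma^k$-regularity recorded after \eqref{eq:phi}, which keeps $\phi^x,\phi^y$ in $\Sigma^2$ uniformly on compact time intervals, together with the essential self-adjointness of $H_x$, $H_y$ (Faris-Lavine, as in Section~\ref{sec:hypothesis}).
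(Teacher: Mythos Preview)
Your proof is correct and follows essentially the same route as the paper's: both compute the time derivative of the mean-field energy, reduce it via self-adjointness to an expression involving only $\partial_t W_{\rm app}$, and then observe the algebraic cancellation $A+B-(A+B)=0$. The only organisational difference is that the paper differentiates $\langle\phi_{\rm app},H_{\rm mf}\phi_{\rm app}\rangle$ directly (using that $i\partial_t\phi_{\rm app}=H_{\rm mf}\phi_{\rm app}$), whereas you first decompose $E(t)=\langle\phi^x,H_x\phi^x\rangle+\langle\phi^y,H_y\phi^y\rangle+\langle W\rangle$ and then differentiate each piece via the subsystem equations; this is a bookkeeping variant rather than a genuinely different argument.
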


Below in~\ref{sec:proof-energy} we give an elementary ad-hoc proof of Lemma~\ref{lem:energy}.

\begin{remark}
  In the brute-force case, the approximate Hamiltonian $H_{\rm bf}=H_x + H_y + W(x,0) + W(0,y) - W(0,0)$ is
  time-independent, and we have
  \[
    \<\psi_{\rm app}(t),H_{\rm bf} \psi_{\rm app}(t)\> =  \<\psi_0,H_{\rm bf}\psi_0\>
\quad\ {\rm for\ all}\ t\ge 0.
\]
However this conserved value does not correspond to the exact energy of 
\eqref{eq:sep-scales}, but only to an approximation of it. 
\end{remark}

\section{An exemplary proof} \label{sec:exproof}
Here we discuss our basic proof strategy and apply it for the norm estimate of 
Proposition~\ref{prop:error-scales}. The norm estimates of Example~\ref{ex:W}, Proposition~\ref{prop:error-scales-energy} and Proposition~\ref{prop:error-semi} and the observable estimates of Proposition~\ref{prop:observables} and Proposition~\ref{prop:observables-semi} are carried 
out in~\ref{sec:error-scales} and~\ref{sec:proof-error-semi}.
 
  \begin{lemma}\label{lem:energy-estimate}
    Let $N\ge 1$, $A$ be self-adjoint on $L^2(\R^N)$, and $\psi$
    solution to the Cauchy problem 
    \[
      ih \d_t \psi= A\psi + \Sigma\quad;\quad \psi_{\mid t=0} =\psi_0,
    \]
    where $\psi_0\in L^2(\R^N)$ and $\Sigma\in L^1_{\rm
      loc}(\R^+;L^2(\R^N))$. Then for all $t\ge 0$,
    \[
      \|\psi(t)\|_{L^2(\R^N)} \le \|\psi_0\|_{L^2(\R^N)} +
      \frac{1}{h}\int_0^t \|\Sigma(s)\|_{L^2(\R^N)}ds. 
    \]
  \end{lemma}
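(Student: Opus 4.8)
The plan is to read $\psi$ as a mild solution and to exploit the unitarity of the propagator generated by the self-adjoint operator $A$, rather than to differentiate the norm directly. First I would invoke Stone's theorem: since $A$ is self-adjoint on $L^2(\R^N)$, it generates a strongly continuous one-parameter group of unitaries $(e^{-itA/h})_{t\in\R}$. Dividing the equation by $ih$ and applying the variation-of-constants (Duhamel) formula, I would represent the solution as
\[
\psi(t) = e^{-itA/h}\psi_0 - \frac{i}{h}\int_0^t e^{-i(t-s)A/h}\Sigma(s)\,ds,
\]
which is well defined under the stated hypotheses $\psi_0\in L^2(\R^N)$ and $\Sigma\in L^1_{\rm loc}(\R^+;L^2(\R^N))$, the integral being understood in the Bochner sense.

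The second step is purely quantitative. Taking the $L^2$-norm of the Duhamel formula, using the triangle inequality together with its Bochner-integral version $\|\int\cdots\|\le\int\|\cdots\|$, and using that each propagator $e^{-i\tau A/h}$ is an $L^2$-isometry, every unitary factor drops out and one obtains directly
\[
\|\psi(t)\|_{L^2} \le \|\psi_0\|_{L^2} + \frac{1}{h}\int_0^t \|\Sigma(s)\|_{L^2}\,ds,
\]
which is exactly the claimed bound. No hidden constant or growth factor appears precisely because the generator is skew-adjoint after division by $ih$, so the homogeneous flow is norm-preserving.

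The only delicate point, and the reason I favour the Duhamel route, is the low regularity of the data: with $\psi_0$ merely in $L^2$ and $\Sigma$ only in $L^1_{\rm loc}(\R^+;L^2)$, the solution $\psi$ is a mild solution and need not be differentiable in $t$ nor take values in the domain of $A$. An energy-method alternative would formally compute $\frac{d}{dt}\|\psi\|_{L^2}^2 = 2\Re\langle \d_t\psi,\psi\rangle = \frac{2}{h}\Re\langle -i\Sigma,\psi\rangle$, relying on the cancellation $\Re\langle -iA\psi,\psi\rangle=0$ coming from self-adjointness of $A$, then bound the right-hand side by $\frac{2}{h}\|\Sigma\|_{L^2}\|\psi\|_{L^2}$ and divide by $\|\psi\|_{L^2}$ (regularizing it by $\sqrt{\|\psi\|_{L^2}^2+\delta^2}$ on its zero set) before integrating. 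This reproduces the same inequality, but making each of those steps rigorous at the present regularity would require a density argument approximating $(\psi_0,\Sigma)$ by smooth data valued in the domain of $A$. The Duhamel computation sidesteps this entirely, so that is the version I would write out.
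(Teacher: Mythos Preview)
Your proof is correct but takes a different route from the paper. The paper argues by the energy method you sketch as an alternative: it writes $\|\psi(t)\|\,\frac{d}{dt}\|\psi(t)\| = \Re\<\psi(t),\frac{1}{ih}(A\psi(t)+\Sigma(t))\> = \frac{1}{h}\Im\<\psi(t),\Sigma(t)\>$, bounds the right-hand side by Cauchy--Schwarz to get $\frac{d}{dt}\|\psi(t)\|\le\frac{1}{h}\|\Sigma(t)\|$, and integrates. Your Duhamel route instead uses Stone's theorem and the unitarity of $e^{-i\tau A/h}$ to read the estimate off directly from the mild-solution formula. The paper's argument is shorter on the page but, as you correctly observe, is formal at the stated regularity (differentiating the norm and pairing with $A\psi$ require $\psi$ in the domain of $A$, which is not assumed); making it rigorous would need an approximation by smoother data. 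Your approach buys a clean, fully rigorous proof under exactly the hypotheses $\psi_0\in L^2$ and $\Sigma\in L^1_{\rm loc}(\R^+;L^2)$, at the modest cost of invoking Stone's theorem and the Bochner triangle inequality.
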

  
This standard lemma is our main tool for proving norm estimates. It will be applied with either $h=1$ or $h=\eps$ as parameter. Its proof is given in~\ref{sec:general-lemmas}. Now we present the proof of Proposition~\ref{prop:error-scales}. 

\bigskip\noindent  
{\bf Proof.}
 Denote by $r_\psi = \psi-\psi_{\rm app}$ and $r_\phi=\psi-\phi_{\rm app}$
the errors corresponding to each of the two
approximations presented in Sections~\ref{subsec:brute} and
\ref{subsec:conv}, respectively. They solve
\begin{equation}\label{eq:remainder-flat}
  i\d_t r_\psi=H r_\psi +\Sigma_\psi\quad ;\quad   i\d_t r_\phi=H r_\phi
  +\Sigma_\phi\quad ;\quad r_{\psi\mid t=0}=  r_{\phi\mid t=0}=0.
\end{equation}
We note that both approximations and their components are norm-conserving for all times $t\ge 0$, that is,
\[
    \|\phi^x(t)\|_{L^2_x} = \|\varphi^x(t)\|_{L^2_x}=  \|\varphi_0^y\|_{L^2_x}\ ,\ 
    \|\phi^y(t)\|_{L^2_y} = \|\varphi^y(t)\|_{L^2_y}=  \|\varphi_0^y\|_{L^2_y}.
  \]

\noindent
$\bullet$  In the case of the brute-force approach, we consider the Taylor expansions \eqref{eq:taylor}
and derive the estimates 
  \begin{equation}\label{eq:Sigmapsi}
    \|\Sigma_\psi\|_{L^2}\le
    \begin{cases}
        2\|\nabla_y W\|_{L^\infty}\|y \psi_{\rm app}\|_{L^2} = 2\|\nabla_y
        W\|_{L^\infty} \|\varphi^x\|_{L^2_x} \|y\varphi^y\|_{L^2_y}
        ,\\
        \|\nabla_x\nabla_y  
        W\|_{L^\infty} \|x\varphi^x\|_{L^2_x} \|y\varphi^y\|_{L^2_y}.\end{cases}
  \end{equation}

\noindent
$\bullet$  In the mean-field approach, we note that for $(t,x,y)\in\R\times \R^{n+d}$, 
\begin{eqnarray}\nonumber
  \Bigl(   \int
   |\varphi_0^x(x')\varphi_0^y(y')|^2dx'dy'\Bigr)\(W-\<W\>_x-\<W\>_y+\<W\>\)(t,x,y)\\\nonumber
=
    \int_{\R^{n+d}}
      \underbrace{\(W(x,y)-W(x,y')-W(x',y)+W(x',y')\)}_{=:\delta W(x,x',y,y')}|\phi^x(t,x')\phi^y(t,y')|^2dx'dy'
  \end{eqnarray}
Like we did in the brute-force approach, we may use either of the
estimates
\[
  |\delta W(x,x',y,y')|\le \begin{cases}
      2 |y-y'|\times\|\nabla_yW\|_{L^\infty},\\
      |x-x'| \times|y-y'| \times\|\nabla_x \nabla_yW\|_{L^\infty}.\end{cases}
\]
In the first case, we come up with
 \begin{eqnarray}\nonumber
   \|\Sigma_\phi\|_{L^2}^2 &\le
 4\|\nabla_y W\|_{L^\infty}^2\|\varphi_0^x\|^2_{L^2_x}
                                       \times\\\nonumber
   &\quad\quad \times\int_{\R^{d}}\(\int_{\R^d}|y-y'| |\phi^y(t,y')|^2dy'\)^2|\phi^y(t,y)|^2dy/\|\varphi_0^y\|^4_{L^2_y} .
 \end{eqnarray}
 Now we have
 \begin{align*}\nonumber
   \int_{\R^d}|y-y'| |\phi^y(t,y')|^2dy'& \le \int_{\R^d}\(|y|+|y'|\)
                                          |\phi^y(t,y')|^2dy'\\\nonumber
   &\le |y| \|\varphi_0^y\|_{L^2_y}^2 + \|y\phi ^y(t)\|_{L^2_y}\|\varphi_0^y\|_{L^2_y},
 \end{align*}
 where we have used Cauchy-Schwarz inequality for the last term. We infer
 \[
  \( \int_{\R^d}|y-y'| |\phi^y(t,y')|^2dy'\)^2 \le 2|y|^2
  \|\varphi_0^y\|_{L^2_y}^4 + 2 \|y\phi ^y(t)\|_{L^2_y}^2\|\varphi_0^y\|^2_{L^2_y},
 \]
 where we have used Young inequality $(\alpha+\beta)^2\le
 2(\alpha^2+\beta^2)$, hence
 \[
   \|\Sigma_\phi\|_{L^2}^2\le
   8\|\nabla_y W\|_{L^\infty}^2\|\varphi_0^x\|_{L^2_x}^2\(\int_{\R^d}|y|^2
                                      |\phi^y(t,y)|^2dy+\|y\phi ^y(t)\|_{L^2_y}^2\) ,
\]
 and finally
 \begin{equation}\label{eq:Sigmaphi}
    \|\Sigma_\phi\|_{L^2}\le
    4\|\nabla_y W\|_{L^\infty}\|y\phi^y(t)\|_{L^2_y}
    \|\varphi_0^x\|_{L^2_x}. 
 \end{equation}
In the case of the second type approximation for $\delta W$, we similarly find
\[
    \|\Sigma_\phi\|_{L^2}\le
    4\|\nabla_x \nabla_y W\|_{L^\infty}\|x\phi^x(t)\|_{L^2_x}
    \|y\phi^y(t)\|_{L^2_y}. 
 \]
Proposition~\ref{prop:error-scales} then follows from Lemma~\ref{lem:energy-estimate} with $h=1$. \hfill $\square$

\section{Dimension reduction via semiclassical analysis}\label{sec:semi}

In this section, we consider coupled systems, where one part is governed 
by a semiclassically scaled Hamiltonian, that is, $H_y = H_y^\eps$ with
\[
H^\eps_y = -\frac{\eps^2}{2}\Delta_y + V_2(y).
\]
First we motivate such a partial semiclassical scaling in the context of system-bath 
Hamiltonians and introduce wave packets as natural initial data
for the semiclassical part of the system. We explore partial semiclassical wave packet dynamics guided by classical
trajectories and by trajectories with averaged potentials. Thus, the partially
highly-oscillatory evolution of a PDE in dimension $n+d$ is reduced to a
less-oscillatory PDEs in dimensions $n$, and ODEs in dimension $d$. The
corresponding error estimates in Section~\ref{sec:error} compare the true and the
approximate product solution in norm and with respect to expectation values.

\subsection{Semiclassical scaling}\label{subsec:scaling}
We reconsider the system-bath Hamiltonian with cubic coupling of Section~\ref{sec:repa}, now formulated  
in physical coordinates $(X,Y)$, that is, 
\[
H_{\rm sb} = -\frac{\hbar^2}{2\mu_1}\Delta_{X} + V_{\rm s}(X)  
-\frac{\hbar^2}{2\mu_{2}}\Delta_{Y} + \frac{\mu_2\omega_2^2}{2} |Y|^2 + \frac12\vec\eta\cdot X|Y|^2,
\]
where the coordinates $X$ and $Y$ of the system and the
bath part are prescaled, resulting in the single mass parameters
$\mu_{1},\mu_{2}$ for each subsystem and one single harmonic frequency
$\omega_2$ for the bath (noting that, alternatively, several
harmonic bath frequencies $\omega_{2,j}$ could be introduced, without modifying the
conclusions detailed below).
The corresponding
time-dependent Schr\"odinger equation reads
\[
i\hbar\partial_\tau\Psi(\tau,X,Y) = H_{\rm sb}\Psi(\tau,X,Y).
\]
We perform a local harmonic expansion of the potential 
$V_{\rm s}(X)$ around the origin $X=0$ and assume that it is possible
to determine a dominant frequency~$\omega_1$. We then define the natural length scale of the system as 
\[
\textcolor{black}{a_1} = \sqrt{\frac{\hbar}{\mu_1\omega_1}}.
\]
Rescaling coordinates as $(x,y) = \frac1a (X,Y)$, we obtain
\[
H_{\rm sb} = \hbar\omega_1 \left( -\frac{1}{2}\Delta_x + V_1(x) 
  -\frac{\eps^2}{2}\Delta_y + \frac{1}{2}\frac{\varpi^2}{\eps^2}|y|^2 + \frac12 
  \vec\eta'\cdot x|y|^2\right),
\]
where we have introduced the dimensionless parameters 
\[
\eps = \sqrt{\frac{\mu_1}{\mu_2}}\quad,\quad \varpi = \frac{\omega_2}{\omega_1}\quad,
\]
and denoted $V_1(x) = \frac{1}{\hbar\omega_1}V_s(\textcolor{black}{a_1}x)$ and $\vec\eta' = \frac{\textcolor{black}{a_1}}{\mu_1\omega_1^2}\,\vec\eta$. 
The rescaling of the system potential~$V_s$  and the coupling
vector $\vec\eta$ do not alter their role in the Hamiltonian,
whereas the two dimensionless parameters $\eps$ and $\varpi$ deserve further
attention. We now consider the r\'egime where both the mass ratio $\eps$
between system and bath and the frequency ratio~$\varpi$ between bath and
system are small, that is, where the system is viewed as ``light''
and ``fast'' when compared to the ``heavy'' and ``slow'' bath.

\begin{example}
For the hydrogen molecule H$_2$, where the electrons are considered as
the quantum subsystem while the interatomic vibration is considered as the
classical subsystem, we have $\mu_1 = m_e$ and $\mu_2 = 918.6 m_e$. Further,
the characteristic electronic energy is of the order of $\hbar\omega_1 = 1
E_h$ while the first vibrational level is found at
$\hbar\omega_2 = 0.02005 E_h$. Hence the dimensionless parameters are both
small, $\eps = 0.03299$ and $\varpi = 0.02005$.
\end{example}

\begin{example}
As a second example, we consider coupled molecular vibrations,
exemplified by the H$_2$ molecule in a ``bath'' of rare-gas atoms, here chosen
as krypton (Kr) atoms. The H$_2$ vibration is now considered as a quantum
system interacting with weak intermolecular vibrations. The reduced masses are
given as $\mu_1$(H-H) = 0.5~u = 911.44 $m_e$ (where u refers to atomic mass
units), $\mu_2$ (Kr-Kr) = 41.9 u = $76.379 \times 10^3\ m_e$, and $\mu_3$
(H$_2$-Kr) = 1.953 u = 3560.10 $m_e$. The vibrational quanta associated with
these vibrations are $\hbar\omega_1 \mbox{(H-H)} = 4159.2 \mbox{cm}^{-1} =
0.0189 E_h$, $\hbar\omega_2 \mbox{(Kr-Kr)} = 21.6 \mbox{cm}^{-1} = 9.82 \times
10^{-5}\ E_h$, and $\hbar\omega_3 \mbox{(H$_2$-Kr)} = 26.8 \mbox{cm}^{-1} =
1.22 \times 10^{-4}\ E_h$ (see Refs.\ \cite{Jaeger:16,Wei:05}). The resulting dimensionless mass
ratios are given as $\epsilon_{12} = \sqrt{\mu_1/\mu_2} = 0.109$ and
$\epsilon_{13} = \sqrt{\mu_1/\mu_3} = 0.51$, and the corresponding frequency
ratios are $\varpi_{12} = \omega_2/\omega_1 = 0.005$ and $\varpi_{13} =
\omega_3/\omega_1 = 0.006$. In the case of the H$_2$-Kr relative
motion, note that the frequency ratio $\varpi_{13}$ is indeed small whereas
the mass ratio is $\epsilon_{13} \sim 0.5$; this shows that the
quantum-classical boundary is less clearly defined than in the first example
of coupled electronic-nuclear motions. In such cases, different choices can be made
in defining the quantum-classical partitioning.
\end{example}

In an idealized setting, where $\eps$ is considered 
as a small positive parameter whose size can be arbitrarily small, we would say that
\[
\varpi = \O(\eps)\quad \ as \quad \eps\to0,
\]
and view the system-bath Hamiltonian $H_{\rm sb}$ as an instance of a partially semiclassical operator
\[
H^\eps = -\frac12\Delta_x + V_1(x) -\frac{\eps^2}{2}\Delta_y + V_2(y) + W(x,y),
\]
whose potentials $V_1(x)$ and $V_2(y)$ are independent of the semiclassical parameter~$\eps$ and satisfy the growth conditions
of Assumption~\ref{hyp:potentials}. As emphasized in Section~\ref{sec:repa}, the cubic coupling potential $W(x,y)$ does not satisfy the subquadratic estimate, but can be controlled by additional moments of the 
approximate solution. A corresponding rescaling of time, 
$t = \eps\omega_1\tau$, translates the time-dependent Schr\"odinger equation to its semiclassical counterpart
\begin{equation}
  \label{eq:schrod-semi}
i\eps \partial_t\psi^\eps(t,x,y) = H^\eps\psi^\eps(t,x,y),
\end{equation}
where the physical and the rescaled wave functions are related via 
\[
\psi^\eps(t,x,y) = a^{(n+d)/2}\,\Psi(\tau/(\eps\omega_1),aX,aY).
\]

\begin{remark}
Criteria for justifying a semiclassical description are somewhat versatile in the literature.
Our scaling analysis shows, that for system-bath Hamiltonians with cubic 
coupling the obviously small parameter $\eps$, that describes a ratio of reduced masses, 
has to be complemented by an equally small ratio of frequencies $\varpi$.  
Otherwise, the standard form of an $\eps$-scaled Hamiltonian, as it is typically assumed in the 
mathematical literature, does not seem appropriate.  
\end{remark}

\subsection{Semiclassical initial data and ansatz}\label{subsec:ansatz}

As before, the initial data separate scales,
\begin{equation}
  \label{eq:CI2}
  \psi^\eps(0,x,y) = \varphi_0^x(x)\gg^\eps(y),
\end{equation}
where we now assume that $\gg^\eps$ is a semiclassically
scaled wave packet,
\begin{equation}\label{eq:initial-wp}
  \gg^\eps(y) = \frac{1}{\eps^{d/4}}a\!
\(\frac{y-q_0}{\sqrt\eps}\)e^{ip_0\cdot (y-q_0)/\eps},
\end{equation}
with $(q_0,p_0)\in \R^{2d}$, $a$ smooth and rapidly decreasing, i.e. $a\in\mathcal S(\R^d;\C)= \cap_{k\in\N} \Sigma^k$. In the typical case, where 
the bath is almost structureless (say, near harmonic), the amplitude $a$ 
could be chosen as a complex Gaussian, but not necessarily.
We now seek an approximate solution of the form
  $\psi_{\rm app}^\eps(t,x,y) = \psi_1^\eps(t,x)\psi_2^\eps(t,y)$,
where $\psi_2^\eps$ is a semiclassically scaled wave packet for all time,
\begin{equation}\label{eq:scaling-semi}
\psi^\eps_2(t,y)=  \frac{1}{\eps^{d/4}}u_2\!\(t,\frac{y-q(t)}{\sqrt \eps}\)
  e^{ip(t)\cdot (y-q(t))/\eps +iS(t)/\eps}.
\end{equation}
Here, $(q(t),p(t))\in \R^{2d}$, the phase $S(t)\in \R$, and the amplitude $u_2(t)\in\mathcal S(\R^d,\C)$ must be
determined. 

\begin{remark} \label{rem:BO}We note that our approximation ansatz  
differs from the adiabatic one, 
that would write the full Hamiltonian as $H^\eps = -\frac{\eps^2}{2}\Delta_y + H_{\rm f}(y)$, where
\[
H_{\rm f}(y) =  -\frac12\Delta_x + V_1(x) + V_2(y) + W(x,y)
\]
is an operator, that parametrically depends on the ``slow'' variable $y$ and acts on the 
``fast'' degrees of freedom $x$. From the adiabatic point of view, one would then construct an 
approximate solution as $\psi_{\rm bo}^\eps(t,x,y) = \Phi(x,y)\psi_2^\eps(t,y)$, where $\Phi(x,y)$ 
is an eigenfunction of the operator $H_{\rm f}(y)$; here, the subscript
``bo'' stands for Born-Oppenheimer. 
The result of Corollary~\ref{cor:BO} emphasizes the difference between these two points of view.
\end{remark}

We denote by
\begin{equation}\label{def:uapp}
  u_{\rm app}^\eps (t,x,z) = \psi_1^\eps(t,x)u_2(t,z)\quad {\rm with}\quad z=\frac{y-q(t)}{\sqrt \eps}
\end{equation}
the part of the approximate solution that just contains the amplitude. With this notation, 
\[
\psi^\eps_{\rm app}(t,x,y) = 
\frac{1}{\eps^{d/4}} u_{\rm app}^\eps(t,x,z)
 e^{ip(t)\cdot z/\sqrt\eps + i S(t)/\eps}\Big|_{z=\frac{y-q(t)}{\sqrt\eps}}.
\]
The analysis  developed in the next two sections allows to derive two different 
approximations, based on ordinary differential equations governing the semiclassical wave packet part,  which are justified in Section~\ref{sec:error} (see Proposition~\ref{prop:error-semi}). 

\subsection{Approximation by partial Taylor expansion}\label{sec:taylor-semi}
Plugging the  expression of~$\psi_{\rm app}^\eps(t,x,y)$
into \eqref{eq:schrod-semi} and writing $y=q(t)+z\sqrt\eps$ in combination with the Taylor expansions
\begin{eqnarray}\nonumber
  V_2(y) = V_2(q(t)+z\sqrt\eps) = V_2(q(t))+\sqrt\eps z\cdot \nabla
  V_2(q(t)) + \frac{\eps}{2}\<z,\nabla^2 V_2(q(t))z\>
           +\O(\eps^{3/2}),\\\nonumber
  W(x,y)=W(x,q(t)+z\sqrt\eps)=  W(x,q(t))+\sqrt\eps z\cdot
          \nabla_yW(x,q(t))+\O\(\eps\),
\end{eqnarray}
we find:
\begin{align*}
  i\eps &\d_t \psi_{\rm app}^\eps   +\frac{1}{2}\Delta_x
         \psi_{\rm app}^\eps +\frac{\eps^2}{2}\Delta_y\psi^\eps_{\rm app}-V(x,y)\psi_{\rm app}^\eps=
         \\
 &  \eps^{-d/4} e^{ip(t)\cdot z/\sqrt\eps +iS(t)/\eps}\times\\
&\Big(  \Big(p\cdot \dot q -\dot S
        -\frac{|p|^2}{2}-V_2(q)-V_1(x)- W(x,q)\Big)u_{\rm app}^\eps \\
&+\sqrt\eps \(-i\dot q \cdot\nabla_z u_{\rm app} ^\eps-\dot p \cdot z\,
  u_{\rm app}^\eps+ip\cdot\nabla_z u_{\rm app}^\eps-z\cdot \nabla V_2(q)u_{\rm app}^\eps- z\cdot \nabla_y W(x,q)u_{\rm app}^\eps \)\\
 &   +\eps\( i\d_t u_{\rm app} ^\eps +\frac{\aaa}{2\eps}\Delta_x u_{\rm app}^\eps 
         +\frac{1}{2}\Delta_z u_{\rm app}^\eps
  -\frac{1}{2}\<z,\nabla^2 V_2\(q\)z\>u_{\rm app}^\eps \)\\
&+\O(\eps^{3/2})\Big),
\end{align*}
where the argument of $u_{\rm app}^\eps$ and its derivatives are taken
in $z=\frac{y-q(t)}{\sqrt\eps}$.  
To cancel the first four terms in the $\sqrt\eps$ line, it is natural to require
\begin{equation}\label{eq:trj}
  \dot q = p,\quad q(0)=q_0,\quad \dot p = -\nabla V_2(q),\quad
    p(0)=p_0.
\end{equation}
Now cancelling the first four terms in the first line of the right
hand side yields
\begin{equation}\label{eq:action}
 \dot S(t) = \frac{|p(\textcolor{black}{t})|^2}{2}-V_2(q(\textcolor{black}{t})),\quad S(0) =0.
\end{equation}
In other words, $(q(t),p(t))$ is the classical trajectory in $y$, and
$S(t)$ is the associated classical action. At this stage, we note that
the term $z\cdot \nabla_y W(x,q)u_{\rm app}^\eps $ is not compatible with decoupling the variables $x$ and $z$ (or equivalently, $x$ and
$y$). 
Using that $\|\nabla_y W\|_{L^\infty}$ is assumed to be small, 
the above computation becomes
\begin{align*}
  i\eps &\d_t \psi_{\rm app} ^\eps  +\frac{1}{2}\Delta_x
         \psi_{\rm app} ^\eps+\frac{\eps^2}{2}\Delta_y\psi_{\rm app} ^\eps-V(x,y)\psi_{\rm app} ^\eps= 
  \eps^{-d/4} e^{ip(t)\cdot z/\sqrt\eps +iS(t)/\eps}\times\\
&\Big(  i\eps\d_t u_{\rm app} ^\eps+\frac{1}{2}\Delta_x u_{\rm app}^\eps
         +\frac{\eps}{2}\Delta_z u_{\rm app} ^\eps
  -\(\frac{\eps}{2}\<z,\nabla^2 V_2\(q\)z\>+V_1(x)+W(x,q)\)u_{\rm app}^\eps\\
&+\O\(\eps^{3/2}+ \sqrt\eps\,  \|\nabla_y W\|_{L^\infty}\)\Big).
\end{align*}
In view of~\eqref{def:uapp}, we  set 
\begin{eqnarray} \label{eq:psi1}
&  i\eps\d_t \psi_1 ^\eps+ \frac{1}{2}\Delta_x \psi_1 ^\eps=
  \(V_1(x)+W(x,q)\)\psi_1^\eps\quad;\quad \psi_{1\mid t=0} ^\eps= \varphi_0^x\\
\label{eq:u2}
&    i\d_t u_2 + \frac{1}{2}\Delta_z u_2 = \frac{1}{2}\<z,\nabla^2
  V_2\(q\)z\>u_2\quad ;\quad u_{2\mid t=0} = a.
\end{eqnarray}
Equation~\eqref{eq:u2} is a Schr\"odinger equation
  with a time-dependent harmonic potential: it has a unique solution
  in $L^2$ as soon as $a\in L^2(\R^d)$. In addition, since $a\in
  \Sigma^k$ for all $k\in \N$, $u_2\in C(\R;\Sigma^k_z)$ for all $k\in \N$.
The validity of this approximation is stated in
Proposition~\ref{prop:error-semi} below. Note that
  if $a$ is a Gaussian state, then $u_2$ too and its (time-dependent)
  parameters -- width matrix and  centre point -- can be computed by solving ODEs
  (see e.g. ~\cite{LubichBlue,CaBook2,CoRoBook,LasserLubich} and
  references therein).

\subsection{Approximation by partial averaging}\label{sec:average-semi}
Following e.g. \cite{CoalsonKarplus,RoeBurg} or \cite[Section~2]{LasserLubich}, we write
\begin{align*}
& V_2(y) = V_2(q(t)+z\sqrt\eps) = \langle V_2\rangle_y(t)+ \sqrt\eps z\cdot\langle \nabla V_2\rangle_y(t) + 
\frac\eps2 \ z\cdot \langle \nabla^2 V_2\rangle_y(t) z + {\rm v}_1,\\
 &W(x,y) = W(x,q(t)+z\sqrt\eps) = \langle W(x,\cdot)\rangle_y(t)+ {\rm v}_2,
\end{align*}
where the averages are with respect to $|\psi_2^\eps(t,y)|^2dy$. For example,
\begin{align}\label{whyu2eps}
\langle \nabla^2 V_2\rangle_y(t) &=
\frac{\int\nabla^2 V_2(y)|\psi_2(t,y)|^2dy}{\int |\psi_2(t,y)|^2dy}\\\nonumber
&=
\frac{1}{\|a\|_{L^2(\R^d)}^2}\int_{\R^d} \nabla ^2V_2(q(t)+\sqrt\eps z)\ \big|u_2(t,z)\big|^2 dz,\\\nonumber
\langle W(x,\cdot)\rangle_y(t) &= \frac{1}{\|a\|_{L^2(\R^d)}^2}\int_{\R^d} W(x,q(t)+\sqrt\eps z) \ \big|u_2(t,z)\big|^2 dz,
\end{align}
where we anticipate the fact that the $L^2_y$-norm of $\psi_2^\eps(t)$ is independent of time.
We almost literally repeat the previous argument and find that
\begin{eqnarray}\nonumber
  i\eps \d_t \psi_{\rm app} ^\eps  +\frac{1}{2}\Delta_x
         \psi_{\rm app} ^\eps+\frac{\eps^2}{2}\Delta_y\psi_{\rm app} ^\eps -V(x,y)\psi_{\rm app} ^\eps= 
  \eps^{-d/4} e^{ip(t)\cdot z/\sqrt\eps +iS(t)/\eps}\times\\\nonumber
  \Big(  i\eps\d_t u_{\rm app} ^\eps+\frac{1}{2}\Delta_x u_{\rm app}^\eps
         +\frac{\eps}{2}\Delta_z u_{\rm app} ^\eps
  -\(\frac{\eps}{2}\ z\cdot\<\nabla^2 V_2\>_y z +V_1(x)+\langle W(x,\cdot)\rangle_y\)u_{\rm app}^\eps\\\nonumber
\ + \,\tilde r_1^\eps+\tilde r_2^\eps\Big)\Big|_{z=\frac{y-q(t)}{\sqrt{\eps}}}
\end{eqnarray}
with 
 $\tilde r_j^\eps={\rm v}_j u_{\rm app}^\eps$, $j=1,2$, and $z$ is taken as $z=(y-q(t))/\sqrt\eps$. 
 The parameters satisfy the equations of motion
 \[
  \dot q = p,\ q(0)=q_0,\ 
  \dot p = -\langle\nabla V_2\rangle_y(t),\ p(0)=p_0,\ 
 \dot S(t) = \frac{|p(t)|^2}{2}- \langle V_2\rangle_y(t),\ S(0) = 0.
\]
We see that we can now define the approximate solution by:
\begin{equation}\label{eq:psi1av}
  i\eps\d_t \psi_1 ^\eps+ \frac{1}{2}\Delta_x \psi_1^\eps =
  \(V_1(x)+\<W(x,\cdot)\>_y(t)\)\psi_1^\eps\quad;\quad \psi_{1\mid t=0}^\eps = \varphi_0^x,
\end{equation}
\[
  i\d_t u_2 + \frac{1}{2}\Delta_z u_2 = \frac{1}{2}\ z\cdot\<\nabla^2
  V_2\>_y\!(t)\,z \ u_2\quad ;\quad u_{2\mid t=0} = a. 
\]
  Since the matrix $\langle\nabla^2
  V_2\rangle_y\!(t)$ is real-valued, we infer that the $L^2_z$-norm of
  $u_2(t)$ is independent of time, hence $\|\psi_2(t)\|_{L^2_y} =
  \|u_2(t)\|_{L^2_z}=\|a\|_{L^2}$. The equation in $u_2$ is now nonlinear, and can be
  solved in $\Sigma^1$, since $\nabla V_2$ is at most linear in its 
  argument: $u_2\in C(\R;\Sigma^1_z)$, and higher $\Sigma^k$ regularity
  is propagated. Here again, if $a$ is a Gaussian, then so is $u_2$ and its width and centre
can be computed by solving ODEs (see~\cite{LubichBlue,CaBook2}). Note
also that, differently from the previous setting, $u_2$ is now $\eps$-dependent via the quantity $\langle\nabla^2 V_2\rangle_y(t)$ (see~\eqref{whyu2eps}).
However, this dependence is very weak since a Taylor expansion
in~\eqref{whyu2eps} shows that $u_2$ is close in any $\Sigma^k$ norm from the
solution of the equation~\eqref{eq:u2}.
  For this reason, we do not keep memory of this $\eps$-dependence and
  write~$u_2$. By contrast, the $\eps$-dependence of $\psi_1^\eps$ is
  strong since it involves oscillatory features in time.

\subsection{The approximation results}\label{sec:error}
The main outcome of the approximations can be stated as follows,
and is proved in~\ref{subsec:error-semi}:

\begin{proposition}\label{prop:error-semi}
  Let $\psi^\eps $ be the solution to
  \eqref{eq:schrod-semi}--\eqref{eq:CI2}, with $\gg^\eps$ given by
  \eqref{eq:initial-wp}. Then with $\psi_{\rm app}^\eps$ given either like
  in Section~\ref{sec:taylor-semi} or like in
  Section~\ref{sec:average-semi}, there exist constants $K_0,K_1$
  independent of $\eps$ such that for all $t\ge 0$,
  \[
    \|\psi^\eps (t)- \psi_{\rm app}^\eps(t)\|_{L^2}\le K_0\( \sqrt\eps +
    \frac{\|\nabla_y W\|_{L^\infty}}{\sqrt\eps}  \) e^{K_1 t}.
 \]
\end{proposition}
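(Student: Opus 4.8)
The plan is to write the error $r^\eps = \psi^\eps - \psi_{\rm app}^\eps$, observe that it solves a forced semiclassical Schr\"odinger equation with vanishing initial datum, and close the estimate with Lemma~\ref{lem:energy-estimate} at $h=\eps$. Subtracting \eqref{eq:schrod-semi} from the identity produced in Section~\ref{sec:taylor-semi} (resp. Section~\ref{sec:average-semi}) gives
\[
i\eps\d_t r^\eps = H^\eps r^\eps + \Sigma^\eps, \quad r^\eps_{\mid t=0}=0,
\]
where $\Sigma^\eps$ is precisely the right-hand side left over once the trajectory \eqref{eq:trj}, the action \eqref{eq:action}, and the equations \eqref{eq:psi1}--\eqref{eq:u2} for $\psi_1^\eps$ and $u_2$ have cancelled all leading terms. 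Since $r^\eps_{\mid t=0}=0$, Lemma~\ref{lem:energy-estimate} with $h=\eps$ yields
\[
\|r^\eps(t)\|_{L^2} \le \frac1\eps\int_0^t\|\Sigma^\eps(s)\|_{L^2}\,ds,
\]
so the whole problem reduces to bounding $\|\Sigma^\eps(s)\|_{L^2}$, and the prefactor $1/\eps$ is exactly what converts the orders $\eps^{3/2}$ and $\sqrt\eps\,\|\nabla_y W\|_{L^\infty}$ of the residual into the stated $\sqrt\eps$ and $\|\nabla_y W\|_{L^\infty}/\sqrt\eps$.

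Next I would estimate $\Sigma^\eps$ term by term. Two kinds of contributions survive the cancellations. The coupling residual $W(x,y)-W(x,q(t))$ is \emph{not} Taylor-expanded but controlled directly by the mean value theorem, $|W(x,q+\sqrt\eps z)-W(x,q)|\le\sqrt\eps\,|z|\,\|\nabla_y W\|_{L^\infty}$, which produces a contribution of size $\sqrt\eps\,\|\nabla_y W\|_{L^\infty}\,\| |z|\,u_{\rm app}^\eps\|_{L^2}$; this is the sole source of the small-coupling factor. The remaining terms are the genuine Taylor remainders of $V_2$ (and, in the averaged scheme, of $\nabla V_2,\nabla^2 V_2$) at order $\eps^{3/2}$: since $\nabla^3 V_2\in L^\infty$ by Assumption~\ref{hyp:potentials}, they are bounded by $C\eps^{3/2}\| |z|^3 u_{\rm app}^\eps\|_{L^2}$. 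Using the product structure $u_{\rm app}^\eps=\psi_1^\eps u_2$ together with the $L^2_x$-norm conservation $\|\psi_1^\eps(t)\|_{L^2_x}=\|\varphi_0^x\|_{L^2_x}$, the coefficients collapse to the weighted amplitude norms $\| |z|^k u_2(t)\|_{L^2_z}$ with $k\in\{1,3\}$; crucially, the change of variables $z=(y-q(t))/\sqrt\eps$ removes the $\eps^{-d/4}$ prefactor from all these $L^2$ norms, so no spurious power of $\eps$ is introduced.

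The third step is to control the time growth of $\| |z|^k u_2(t)\|_{L^2_z}$ uniformly in $\eps$. Since $u_2$ solves a Schr\"odinger equation with a harmonic potential whose Hessian is bounded — either $\nabla^2 V_2(q(t))$ in \eqref{eq:u2} or its average $\langle\nabla^2 V_2\rangle_y(t)$ in the averaged scheme — a Gronwall argument on the coupled evolution of $\| |z|^k u_2\|^2$ and $\|\nabla_z^k u_2\|^2$ in the $\Sigma^k_z$ scale shows at-most-exponential growth $\|u_2(t)\|_{\Sigma^3}\le Ce^{K_1 t}$. This is where both the factor $e^{K_1 t}$ and the requirement that $a\in\Sigma^3$ enter. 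Substituting the two bounds into the source estimate and integrating,
\[
\|r^\eps(t)\|_{L^2}\le\frac{C\|\varphi_0^x\|_{L^2_x}}{\eps}\int_0^t\(\eps^{3/2}+\sqrt\eps\,\|\nabla_y W\|_{L^\infty}\)e^{K_1 s}\,ds\le K_0\(\sqrt\eps+\frac{\|\nabla_y W\|_{L^\infty}}{\sqrt\eps}\)e^{K_1 t},
\]
which is the asserted inequality.

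The main obstacle I anticipate is precisely the uniform-in-$\eps$ exponential moment control of the third step, and most delicately so for the averaging scheme of Section~\ref{sec:average-semi}: there the amplitude equation is nonlinear, because $\langle\nabla^2 V_2\rangle_y(t)$ depends on $u_2$ itself and weakly on $\eps$ through \eqref{whyu2eps}. One must first establish global well-posedness and $\Sigma^k$ propagation for this nonlinear equation, and then check that $\langle\nabla^2 V_2\rangle_y(t)$ stays bounded along the flow so that the Gronwall estimate applies with $\eps$-independent constants. A secondary, more bookkeeping-type difficulty is to make the various $\O(\eps^{3/2})$ symbols precise as honest $L^2_{x,z}$ bounds depending only on $\Sigma^3$-norms of $u_2$ and $L^2_x$-norms of $\psi_1^\eps$, so that no hidden $\eps$-dependence leaks into $K_0$ and $K_1$.
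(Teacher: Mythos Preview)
Your proposal is correct and follows essentially the same route as the paper: write the error as a forced semiclassical Schr\"odinger equation, apply Lemma~\ref{lem:energy-estimate} with $h=\eps$, bound the residual pointwise as $\eps^{3/2}\|\nabla^3 V_2\|_{L^\infty}|z|^3|u_2|$ plus $\sqrt\eps\,\|\nabla_y W\|_{L^\infty}|z||u_2|$, and close with the exponential $\Sigma^k$-control of $u_2$. The obstacles you flag for the averaging scheme are exactly the ones the paper handles (by noting $\langle f\rangle_y(t)=f(q(t))+\O(\sqrt\eps)$ so that $\tilde r_j^\eps$ is perturbatively close to $r_j^\eps$), and your caution about $\eps$-independence of the constants is well placed.
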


\begin{corollary}
Assume $\eta:=\|\nabla_y W\|_{L^\infty}\ll \sqrt\eps$, then for all $T>0$,
\[\sup_{t\in[0,T]}  \|\psi^\eps(t)- \psi_{\rm app}^\eps(t)\|_{L^2}= \O\left( \sqrt\eps + \frac{\eta}{\sqrt\eps}\right).\]
\end{corollary}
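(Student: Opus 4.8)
The plan is to read off the corollary directly from Proposition~\ref{prop:error-semi}, whose only genuine content has already been established: the constants $K_0$ and $K_1$ do not depend on $\eps$. First I would fix $T>0$ and restrict attention to $t\in[0,T]$. On this compact interval the exponential factor is uniformly bounded, $e^{K_1 t}\le e^{K_1 T}=:C_T$, where $C_T$ is a finite constant that depends on $T$ but not on $\eps$.

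Next, I would insert this bound into the estimate of Proposition~\ref{prop:error-semi}. Writing $\eta=\|\nabla_y W\|_{L^\infty}$, for every $t\in[0,T]$ this yields
\[
\|\psi^\eps(t)-\psi_{\rm app}^\eps(t)\|_{L^2}\le K_0\(\sqrt\eps+\frac{\eta}{\sqrt\eps}\)e^{K_1 t}\le K_0\,C_T\(\sqrt\eps+\frac{\eta}{\sqrt\eps}\).
\]
Taking the supremum over $t\in[0,T]$ on the left-hand side leaves the right-hand side unchanged, since that bound is already independent of $t$. As the prefactor $K_0 C_T$ is independent of $\eps$, this is precisely the asserted estimate
\[
\sup_{t\in[0,T]}\|\psi^\eps(t)-\psi_{\rm app}^\eps(t)\|_{L^2}=\O\(\sqrt\eps+\frac{\eta}{\sqrt\eps}\).
\]

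Finally, I would record the role of the standing hypothesis $\eta\ll\sqrt\eps$: it is what guarantees that $\eta/\sqrt\eps\to0$, so that both terms in the bound vanish as $\eps\to0$ and the approximation is genuinely convergent rather than merely bounded. There is no real obstacle here, as the entire difficulty is contained in Proposition~\ref{prop:error-semi}, and in particular in the $\eps$-independence of $K_0$ and $K_1$. The only point requiring any care is to keep the time interval bounded: the estimate degrades exponentially in $t$, so a supremum over all $t\ge0$ would not yield a finite, $\eps$-uniform constant, which is why the statement is phrased for arbitrary but fixed $T$.
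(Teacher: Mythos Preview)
Your argument is correct and is exactly what the paper intends: the corollary is stated without proof as an immediate consequence of Proposition~\ref{prop:error-semi}, obtained by bounding $e^{K_1 t}\le e^{K_1 T}$ on a compact interval and absorbing this into the $\eps$-independent constant.
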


\begin{remark}\label{rem:Sobolev-semi}
Using the same techniques as in~\ref{sec:proof-error-scales-energy}, one can prove estimates on higher regularity norms, using $\eps$-derivatives in $y$ and standard ones in $x$. For example, 
if $a\in\Sigma^4$, then there exists $K_0$, $K_1$ independent of $\eps$ such that 
\begin{align*}
& \| \eps \nabla_y \psi^\eps(t)-\eps\nabla_y  \psi^\eps_{\rm app} (t)\|_{L^2}+ \| y \psi^\eps(t)-y  \psi^\eps_{\rm app} (t)\|_{L^2}\\
&\quad\le 
 K_0\left( 
   \sqrt \eps \int_0^t e^{K_1 s} \| u_2(s)\|_{\Sigma^4} ds 
   +   \frac{\|\nabla_y W\|_{L^\infty}}{\sqrt\eps}  \int_0^t e^{K_1s} \| u_2(s)\|_{\Sigma^2} ds\right).
\end{align*}
We refer to \cite{CaFe11} (see also \cite[Chapter~12]{CaBook2}) for
  more detailed computations.
\end{remark}

Note that, in both approximations,  the evolution of $u_2$ corresponds to the standard quadratic
approximation. In particular, if $a$ is Gaussian, then $u_2$ is
Gaussian at all time, and solving the equation in $u_2$ amounts to
solving ordinary differential equations. However, the equation~\eqref{eq:psi1} solved by $\psi_1(t)$ is still quantum, such that a reduction of the total space dimension of the quantum system has been made from $n+d$ to $n$. 

\bigskip
Let us now discuss the approximation of observables that we choose as acting only in 
 the variable $y$. Due to the presence of the small parameter $\eps$, we choose semiclassical observables and associate with $b\in C^\infty_c(\R^{2d})$ ($b$ smooth and compactly supported) the operator 
${\rm op}_\eps(b)$ whose action on functions $f\in \mathcal S(\R^d)$ is given by
\[{\rm op}_\eps(b) f(y)= (2\pi\eps)^{-d} \int_{\R^{2d}} b\left(\frac {y+y'}{2} , \xi\right)
e^{i\xi\cdot (y-y')/\eps} f(y') \, d\xi dy'.\]
As before, the error estimate is better for quadratic observables than for the wave functions. 
More specifically, the following result, that is proved in \ref{sec:error-quad-semi}, improves the error estimate from Proposition~\ref{prop:error-semi} by a factor $\sqrt\eps$.

\begin{proposition}\label{prop:observables-semi}
  Let $\psi^\eps$ be the solution to
  \eqref{eq:schrod-semi}--\eqref{eq:CI2}, with $\gg^\eps$ given by
  \eqref{eq:initial-wp}. Then with $b\in C^\infty_c(\R^{2d})$ and $\psi_{\rm app}^\eps$ given either like
  in Section~\ref{sec:taylor-semi} or in
  Section~\ref{sec:average-semi}, there exist a constant $K$
  independent of $\eps$ such that for all $t\ge 0$,
  \[
    \left|\langle \psi ^\eps(t),{\rm op}_\eps (b) \psi^\eps(t)\rangle -\langle  \psi_{\rm app}^\eps(t),{\rm op}_\eps(b)  \psi_{\rm app}^\eps(t) \rangle \right|
    \le K\, t \( \eps +
    \|\nabla_y W\|_{L^\infty}\).
 \]
\end{proposition}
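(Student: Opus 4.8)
The plan is to use the averaging built into the semiclassical Weyl calculus, in the form of an Egorov argument for the $y$-flow, to cancel the $\mathcal O(1)$ transport part of the commutator at leading order, so that the error is driven only by contributions of size $\eta:=\|\nabla_y W\|_{L^\infty}$ (the genuine coupling) and of size $\eps$ (quadratic mismatch and calculus remainders); this is exactly the mechanism producing the gain of one factor $\sqrt\eps$ over Proposition~\ref{prop:error-semi}. The first step is to observe that, in both constructions of Section~\ref{sec:taylor-semi} and Section~\ref{sec:average-semi}, the approximate solution is an \emph{exact} solution of a closed Schr\"odinger equation $i\eps\partial_t\psi_{\rm app}^\eps=H_{\rm app}^\eps\psi_{\rm app}^\eps$, with
\[
H_{\rm app}^\eps=-\tfrac12\Delta_x+V_1(x)-\tfrac{\eps^2}2\Delta_y+V_2^{\rm app}(t,y)+W_{\rm eff}(t,x),
\]
where $V_2^{\rm app}(t,\cdot)$ is the quadratic expansion of $V_2$ around $q(t)$ and $W_{\rm eff}$ is as in \eqref{eq:psi1} or \eqref{eq:psi1av}; indeed the wave packet \eqref{eq:scaling-semi} with $u_2,S,q,p$ chosen as in Section~\ref{sec:taylor-semi} solves exactly the Schr\"odinger equation with quadratic $y$-potential, so no source term remains. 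The decisive structural point is that $A={\rm op}_\eps(b)$ acts only in $y$ and therefore commutes with $H_x$ and with the multiplication operator $W_{\rm eff}(t,x)$: these pieces never contribute to the error, and only $H_y^\eps$ and the true coupling $W(x,y)$ do.

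Let $h_y(y,\xi)=\tfrac12|\xi|^2+V_2(y)$ be the principal symbol of $H_y^\eps$, let $\Phi^s$ denote its Hamiltonian flow on $\R^{2d}$, and set $b_s:=b\circ\Phi^s\in C_c^\infty(\R^{2d})$, with the Poisson-bracket convention for which ${\rm op}_\eps(\{b_s,h_y\})$ is the principal symbol of $\tfrac1{i\eps}[{\rm op}_\eps(b_s),H_y^\eps]$. For a fixed horizon $t$ I would then study
\[
f(s)=\langle\psi^\eps(s),{\rm op}_\eps(b_{t-s})\psi^\eps(s)\rangle-\langle\psi_{\rm app}^\eps(s),{\rm op}_\eps(b_{t-s})\psi_{\rm app}^\eps(s)\rangle,
\]
so that $f(0)=0$ because the initial data coincide, while $f(t)$ is precisely the quantity to be estimated. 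Differentiating $f$ and inserting the Heisenberg relations for each state, the symbol time-derivative $\partial_s{\rm op}_\eps(b_{t-s})$ cancels the transport commutator $\tfrac1{i\eps}[{\rm op}_\eps(b_{t-s}),H_y^\eps]$ up to the Weyl remainder, using the symbolic calculus identity $\tfrac1{i\eps}[{\rm op}_\eps(b_{t-s}),H_y^\eps]={\rm op}_\eps(\{b_{t-s},h_y\})+\mathcal O(\eps^2)$, which holds because $b_{t-s}$ has bounded derivatives and $V_2$ has bounded derivatives of order $\ge2$.

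After this cancellation $f'(s)$ is a sum of genuinely small terms. For the exact solution the only surviving commutator is the coupling one, and $\tfrac1{i\eps}[{\rm op}_\eps(b_{t-s}),W]=-{\rm op}_\eps(\nabla_\xi b_{t-s}\cdot\nabla_y W)+\mathcal O(\eps)$ has operator norm $\mathcal O(\eta+\eps)$ by Calder\'on--Vaillancourt, since $\|\nabla_\xi b_{t-s}\cdot\nabla_y W\|_{L^\infty}\le C\eta$; together with the $\mathcal O(\eps^2)$ Egorov remainder this contributes $\mathcal O(\eps+\eta)$. For the approximate solution $W_{\rm eff}$ commutes with $A$ and drops out, so only the mismatch of principal symbols remains, $\{b_{t-s},V_2-V_2^{\rm app}\}=\nabla_\xi b_{t-s}\cdot\nabla_y(V_2-V_2^{\rm app})$; since $V_2^{\rm app}$ is the quadratic expansion of $V_2$ around $q(s)$ one has $\nabla_y(V_2-V_2^{\rm app})=\mathcal O(|y-q(s)|^2)$ with bounded coefficient, and testing against the wave packet $\psi_{\rm app}^\eps$, which is localised in $y$ at scale $\sqrt\eps$ around $q(s)$ with finite second moment because $u_2\in\Sigma^k$, produces a factor $\langle|y-q(s)|^2\rangle=\mathcal O(\eps)$. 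Hence $f'(s)=\mathcal O(\eps+\eta)$ uniformly, and integrating from $0$ to $t$ gives $|f(t)|\le Kt(\eps+\eta)$. The averaging construction of Section~\ref{sec:average-semi} is handled identically, using in addition that the wave packet stays centred at $(q(s),p(s))$, whence $\langle\nabla^2V_2\rangle_y(s)=\nabla^2V_2(q(s))+\mathcal O(\eps)$ and the symbol mismatch is again $\mathcal O(\eps)$.

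I expect the main difficulty to lie in the uniform bookkeeping of the semiclassical remainders: establishing the $\mathcal O(\eps^2)$ commutator estimate with a constant controlled by finitely many seminorms of the flowed symbol $b_{t-s}$ (this is the origin of the exponent $N$ and of the $t$-dependence, which stays linear precisely when $\|b_{t-s}\|_{C^N}$ is bounded uniformly in $s$, as happens for the harmonic baths of Section~\ref{sec:repa}), and making the wave-packet moment bound for the cubic remainder rigorous. Both are standard but technical; I would invoke the Calder\'on--Vaillancourt theorem for $L^2$-boundedness with the small constant $\eta$, the propagation of $\Sigma^k$-regularity of $u_2$ from Sections~\ref{sec:taylor-semi}--\ref{sec:average-semi} for the moments, and refer to \cite{CaFe11} and \cite[Chapter~12]{CaBook2} for the detailed symbolic computations.
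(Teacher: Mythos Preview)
Your argument is correct and runs parallel to the paper's, but with a genuinely different implementation of the Egorov step. The paper applies Lemma~\ref{lem:quad-estimate} with $A_2=H^\eps$ and $A_1=H^\eps_{\rm te}$, so that the Heisenberg-evolved observable $B(\sigma)=e^{i\sigma H^\eps/\eps}Be^{-i\sigma H^\eps/\eps}$ is transported by the \emph{full} Hamiltonian; Egorov's theorem then gives $B(\sigma)\approx{\rm op}_\eps(b(\sigma))$, and the commutator $\frac{1}{i\eps}[B(t-s),\delta W]$ is expanded symbolically, with $\nabla_y\delta W=\nabla_yW+\O(|y-q|^2)$ contributing $\O(\eta)$ from the coupling and $\O(\eps)$ after testing against the localised $\psi^\eps_{\rm app}$. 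You instead transport the symbol by the classical flow of $h_y=\tfrac12|\xi|^2+V_2(y)$ alone and differentiate the difference of expectation values directly. This buys you two things: the Egorov argument stays within a purely semiclassical-in-$y$ setting, sidestepping the fact that $H^\eps$ contains the unscaled $-\tfrac12\Delta_x$; and the time-dependence of $H^\eps_{\rm te}$ is handled transparently without extending Lemma~\ref{lem:quad-estimate} to non-autonomous generators. The price is that you treat the $V_2$-mismatch on the approximate side and the $W$-commutator on the exact side separately, whereas the paper bundles both into the single function $\delta W$. Both routes ultimately rely on the wave-packet moment bound $\langle\psi^\eps_{\rm app},|y-q|^2\psi^\eps_{\rm app}\rangle=\O(\eps)$ to convert the cubic Taylor remainder of $V_2$ into an order-$\eps$ contribution; you flag this explicitly, while the paper leaves it implicit.
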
 

\begin{remark}
Of course, we could have considered a mixed setting consisting of pseudodifferential operators as in Section~\ref{sec:error-quad} in the variable~$x$, and semiclassical as above in the variable~$y$. One would then obtain estimates mixing those of Proposition~\ref{prop:observables} and Proposition~\ref{prop:observables-semi}. 
\end{remark}


\section{\textcolor{black}{A numerical example}}\label{sec:num}
\textcolor{black}{
\begin{table}\centering
\begin{tabular}{l|l|l}
model variation & $\varpi = \omega_2/\omega_1$ & $\eta$ (coupling) \\\hline
blue & $\varpi_{\rm ref}$ & $\eta_{\rm ref}$ \\
red & $\frac14\varpi_{\rm ref}$ & $\eta_{\rm ref}$\\
grey & $\varpi_{\rm ref}$ & $\frac98 \eta_{\rm ref}$\\
yellow & $\varpi_{\rm ref}$ & $\frac{3}{4}\eta_{\rm ref}$ 
\end{tabular}
\caption{\label{tab}Parameters defining the four numerically simulated variations of the system-bath Hamiltonian~\eqref{eq:ham}. The blue model uses 
the coupling parameter $\eta_{\rm ref} = -k_2^0/(3a_1\ell)$ and 
the frequency ratio $\varpi_{\rm ref}=1/100$. The red model varies the frequency ratio, the grey and yellow models the coupling strength.}
\end{table}
For an illustrative numerical application, we consider a system-bath type Hamiltonian with cubic coupling $W(x,y)=\frac12 \eta xy^2$, as developed in Section~\ref{sec:repa}, 
\[
i\partial_t\psi(t,x,y) = H_{\rm sb}\psi(t,x,y)\quad;\quad \psi(0,x,y) = \varphi^x_0(x)\varphi^y_0(y),
\]
in dimension $d=n=1$,
\begin{equation}\label{eq:ham}
H_{\rm sb} = -\frac{1}{2\mu_1}\frac{\d^2}{\d x^2} -
\frac{1}{2\mu_2}\frac{\d^2}{\d y^2} + V_{\rm s}(x) + V_{\rm b}(y) + 
\frac{1}{2}\eta xy^2.
\end{equation}
The mass ratio between the system and the bath is moderately small, $\mu_1/\mu_2 = 0.25$.  
The system potential is a quartic double well, while the bath potential is harmonic, 
\[
V_{\rm s}(x) = \frac12 x^2\left(\frac{x}{2\ell}-1\right)^2, \quad V_{\rm b}(y) = \frac12 k_2^0 y^2.
\]
The length scale $\ell=4a_1$ of the double well is a multiple of the system's natural harmonic 
unit~$a_1$. 
The initial data $\psi_0(x,y)=\varphi^x_0(x)\varphi^y_0(y)$ are the ground state of the bivariate harmonic oscillator, that results from the harmonic approximation of $V_{\rm s}(x) + V_{\rm b}(y)$ around the left  
well $(x,y)=(0,0)$. The coupling constant 
$\eta<0$ is negative to ensure that the total Hamiltonian's ground state is localised in the right well $(x,y)=(2\ell,0)$, providing a setup with pronounced non-equilibrium dynamics.
For such a system-bath model, the gradient-free error estimate \eqref{eq:gfree} of Example~\ref{ex:W} is given by
\begin{align}\label{eq:normest}
&\|\psi(t)-\phi_{\rm app}(t)\|_{L^2}\le \\\nonumber
&\frac{1}{2}|\eta| \,\|\varphi_0^x\|_{L^2_x}\|\varphi_0^y\|_{L^2_y}\int_0^t \sqrt{\left(\<x^2\>_x(s)-\<x\>_x^2(s)\right)
\left(\<y^4\>_y(s)-\<y^2\>_y^2(s)\right)} ds.
\end{align}
\begin{figure}
\begin{subfigure}{0.49\textwidth}
\includegraphics[width=\linewidth]{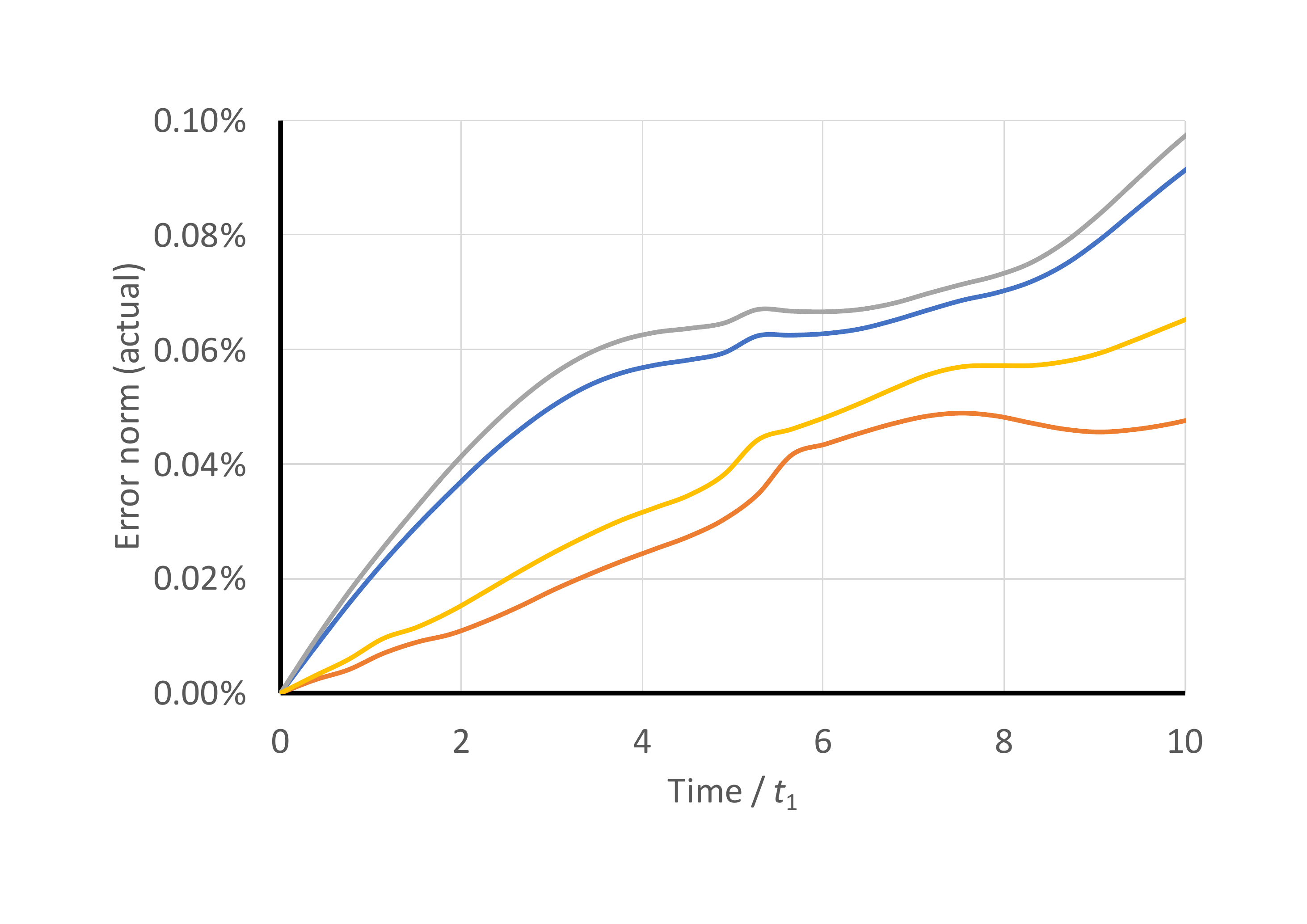}
\caption{\label{figa}Actual numerical error}
\end{subfigure}
\hspace*{\fill}
\begin{subfigure}{0.49\textwidth}
\includegraphics[width=\linewidth]{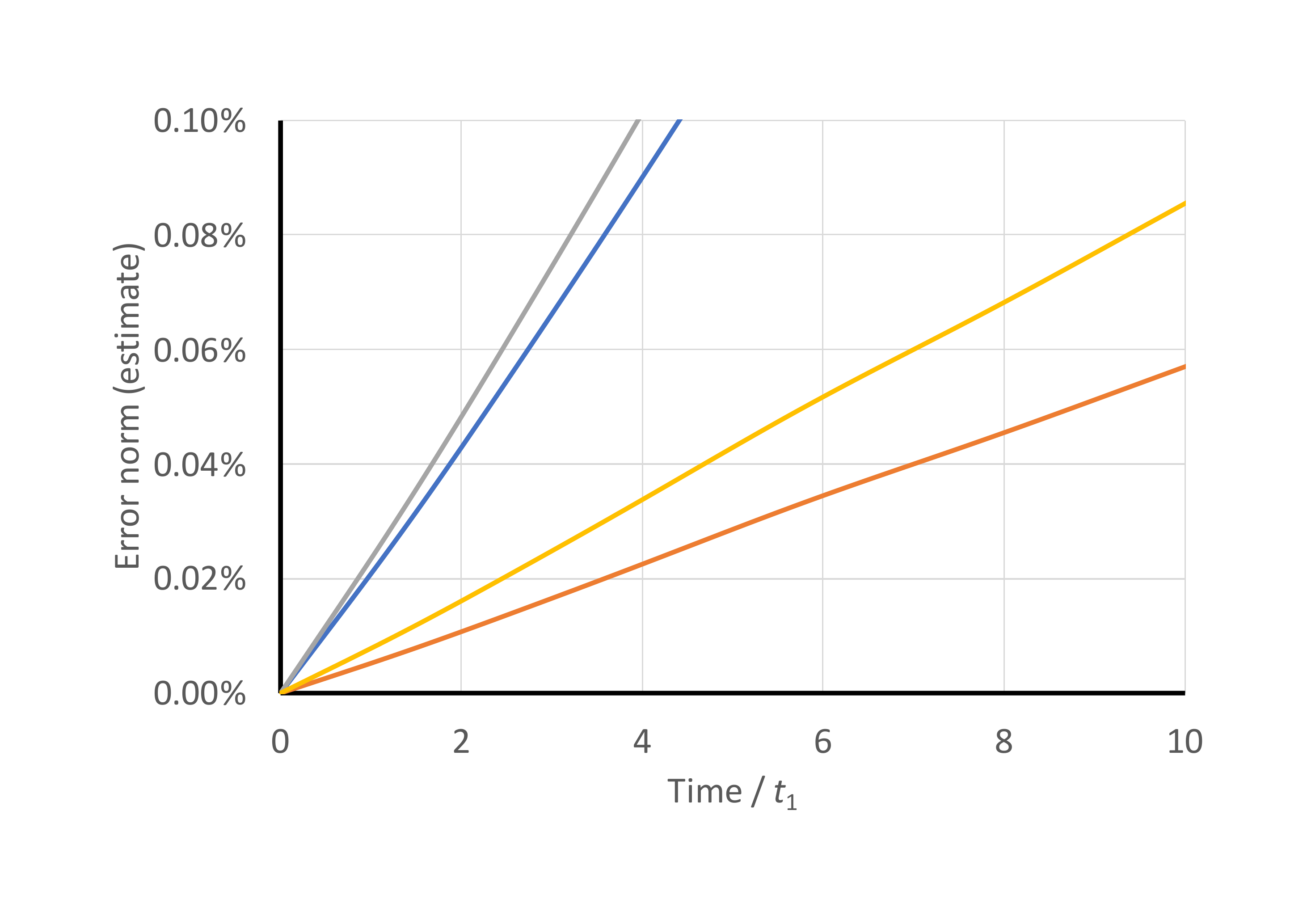}
\caption{\label{figb}Theoretical error estimate}
\end{subfigure}
\caption{\label{fig}Error of the mean-field approximation for a family of system-bath models with cubic coupling $W(x,y)=\frac12\eta xy^2$ as a function of time. The left plot shows the time evolution of the norm of the error for four different variations of the parameter values as given in Table~\ref{tab}.  
The right plot shows the corresponding upper bound of the error estimate \eqref{eq:normest}.}
\end{figure}
Figure~\ref{fig} presents the results from the following numerical experiment: We identify a 
frequency ratio $\varpi_{\rm ref}=1/100$ between bath and system and a coupling parameter 
$\eta_{\rm ref} = -k_2^0/(3a_1\ell)$ as generating ``reference'' system-bath dynamics. For this parameter choice, 
the mean-field approximation, when compared with a numerically converged MCTDH approximation, 
results in roughly a $0.1\%$ error after $10$ units of the natural 
harmonic time scale $t_1= 1/\omega_1$ of the system (blue curve), see Figure~\ref{figa}. Hence, the Hartree approximation is excellent 
on the time scale under study.
Decreasing the frequency ratio by a factor of four, roughly halves the error (red curve). 
And, as expected, an increase in the coupling strength also increases the error (grey curve), while decreasing 
the coupling also decreases the error (yellow curve). The corresponding plot of 
Figure~\ref{figb} illustrates that the upper bound of the theoretical error estimate correctly captures 
the initial slope of all four error curves, while slightly over-estimating the actual error as time evolves. 
A more detailed assessment of the error estimate \eqref{eq:normest}, in particular of its long-time behaviour (up to 150 ps), when the mean-field approximation goes up to errors of the order of $1\%$,  
and a more complete screening of physically relevant parameter r\'egimes are work in progress for a numerical companion paper to the present theoretical study.}

\section{Conclusion and outlook}
We have presented quantitative error bounds for the approximation of quantum dynamical wave functions in product form. For both considered approaches, a brute-force single point collocation and the conventional mean-field Hartree approximation, we have obtained similar error estimates in $L^2$-norm (Proposition~\ref{prop:error-scales},  Example~\ref{ex:W}), in $H^1$-norm (Proposition~\ref{prop:error-scales-energy}), and for quadratic observables (Proposition~\ref{prop:observables}). To our knowledge, 
such general estimates, that quantify decoupling in terms of flatness properties of the coupling potential, are new. 
The corresponding analysis for semiclassical subsystems (Proposition~\ref{prop:error-semi},  Proposition~\ref{prop:observables-semi}) confirms the more general finding, that error estimates for 
quadratic observables provide smaller bounds than related norm estimates. 
The single product analysis, as presented here, provides a 
well-posed starting point for the investigation of more elaborate approximation methods.  
If the initial data satisfy
 \[
     \psi(0,x,y)=\psi_0(x,y) = \sum_{j=1}^J \varphi_{0j}^x(x)\varphi_{0j}^y(y),
   \]
  then we may invoke the linearity of \eqref{eq:sep-scales} to write
  $\psi(t,x,y) =  \sum_{j=1}^J  \psi_j(t,x,y)$, 
   where each $\psi_j$ solves
   $   i\d_t \psi_j = H \psi_j$, with $ \psi_{j\mid t=0} =
     \varphi_{0j}^x\otimes\varphi_{0j}^y$.
We approximate each $\psi_j\approx\psi_{j,{\rm app}}$ individually in one of the ways discussed in the present paper and use the triangle inequality for
\[
  \big\| \psi(t) - \sum_{j=1}^J\psi_{j,{\rm app}}(t)\big\|\le
  \sum_{j=1}^J \|\psi_j(t) - \psi_{j,{\rm app}}(t)\|,
\]
where the norm can be an $L^2$ or an energy norm, for
instance. However, working on each $\psi_j$ instead of 
$\psi$ directly, seems to prevent control of the limit $J\to\infty$. Multi-configuration methods therefore use ansatz functions of the form 
  \[
     \psi_{\rm app}(t,x,y) =  \sum_{j,\ell} a_{j\ell}(t) \, 
     \varphi_{j}^{(x)}(t,x)\, \varphi_{\ell}^{(y)}(t,y),
   \]     
where the families $(\varphi^{(x)}_j(t))_{j\ge1}$ and $(\varphi^{(y)}_\ell(t))_{\ell\ge1}$ satisfy orthonormality or rank conditions, while gauge constraints lift redundancies for the coefficients $a_{k\ell}(t)\in\C$.
We view our contributions here as an important first step for a systematic assessment 
of such multi-configuration approximations in the context of coupled quantum systems.
A numerical companion paper, that explores the dynamics of system-bath Hamiltonians with 
cubic coupling \textcolor{black}{on multiple time-scales with respect to various parameter r\'egimes}, is currently in preparation.

\subsection*{Acknowledgements}
Adhering to the prevalent convention in mathematics, 
we chose an alphabetical ordering of authors. We thank Lo\"ic Joubert-Doriol, G\'erard Parlant,
Yohann Scribano, and Christof Sparber for stimulating discussions in the context of this paper. We acknowledge support from the CNRS 80$|$Prime 
project {\it AlgDynQua} and the visiting professorship program of the I-Site Future.

\appendix
\section{General estimation lemmas}\label{sec:general-lemmas}
Here we provide the proof of the standard energy estimate, Lemma~\ref{lem:energy-estimate}.

\begin{proof}
In view of the self-adjointness of $A$, we have
\begin{align*}
\|\psi(t)\|\ \frac{d}{dt}\|\psi(t)\| &= \frac12\frac{d}{dt} \langle \psi(t),\psi(t)\rangle
= \Re\<\psi(t),\frac{1}{ih} (A\psi(t)+\Sigma(t))\>\\
&= \frac{1}{h}\Im\langle\psi(t),\Sigma(t)\rangle, 
\end{align*}
and therefore, by the Cauchy--Schwarz inequality, 
$\frac{d}{dt}\|\psi(t)\|  \le \frac{1}{h}\|\Sigma(t)\|$.
Integrating in time, we obtain
\[
\|\psi(t)\| = \|\psi_0\| + \int_0^t \frac{d}{ds}\|\psi(s)\| ds \le 
\|\psi_0\| + \frac{1}{h}\int_0^t \|\Sigma(s)\| ds.
\]
\end{proof}

In the context of observables, refined error estimates will follow from
the application of the following lemma. 

\begin{lemma}\label{lem:quad-estimate}
Let $N\ge 1$, $A_1$, $A_2$, $B$ be self-adjoint on $L^2(\R^N)$, and $\psi^{(1)}, \psi^{(2)}$, 
    solutions to the homogeneous Cauchy problems 
    \[
      ih \d_t \psi^{(j)}= A_j\psi^{(j)} \quad;\quad \psi^{(j)}_{\mid t=0} =\psi_0,
    \]
    where $\psi_0\in L^2(\R^N)$. Then, for all $t\ge 0$ 
    \[
    \left| \<\psi^{(1)}(t),B\psi^{(1)}(t)\> - \<\psi^{(2)}(t),B\psi^{(2)}(t)\>\right| \le \frac1h\int_0^t |\rho(s,t)| \,ds,
    \]
    with
    $
    \rho(s,t) = \<\psi^{(1)}(s),\left[e^{iA_2(t-s)/h}Be^{-iA_2(t-s)/h},A_1-A_2\right]\psi^{(1)}(s)\>$, 
 where we have denoted by  $[\mathcal A,\mathcal B]=\mathcal{ AB-BA}$ the standard commutator.
\end{lemma}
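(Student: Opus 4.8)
The plan is to compare the two expectation values through a single application of the fundamental theorem of calculus, interpolating between the two dynamics by a ``mixed'' evolution in which the \emph{state} is propagated by $A_1$ while the \emph{observable} is propagated (in the Heisenberg sense) by $A_2$. Writing the solutions as $\psi^{(j)}(t)=e^{-iA_jt/h}\psi_0$, I would introduce the backward-evolved observable $B_s:=e^{iA_2(t-s)/h}\,B\,e^{-iA_2(t-s)/h}$ together with the scalar function
\[
f(s):=\langle \psi^{(1)}(s),B_s\,\psi^{(1)}(s)\rangle,\qquad s\in[0,t].
\]
The whole point of this choice is that the two endpoints recover exactly the quantities to be compared: at $s=t$ the conjugation by $A_2$ is trivial, so $f(t)=\langle\psi^{(1)}(t),B\psi^{(1)}(t)\rangle$, whereas at $s=0$ one has $\psi^{(1)}(0)=\psi_0$, and moving the factor $e^{-iA_2t/h}$ onto $\psi_0$ via the self-adjointness of $A_2$ gives $f(0)=\langle\psi^{(2)}(t),B\psi^{(2)}(t)\rangle$. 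Thus the left-hand side of the claimed inequality equals $|f(t)-f(0)|$.

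The next step is to differentiate $f$. Three contributions arise: two from the time dependence of $\psi^{(1)}(s)$, governed by $ih\,\d_s\psi^{(1)}=A_1\psi^{(1)}$, and one from the time dependence of $B_s$, which satisfies the Heisenberg equation $\d_s B_s=\tfrac{i}{h}[B_s,A_2]$ obtained by directly differentiating the conjugation. Using the self-adjointness of $A_1$ to transfer it across the inner product, the first two combine into $\tfrac{i}{h}\langle\psi^{(1)},[A_1,B_s]\psi^{(1)}\rangle$, while the third yields $\tfrac{i}{h}\langle\psi^{(1)},[B_s,A_2]\psi^{(1)}\rangle$. The decisive algebraic fact is the cancellation $[A_1,B_s]+[B_s,A_2]=-[B_s,A_1-A_2]$, so that
\[
f'(s)=-\tfrac{i}{h}\langle\psi^{(1)}(s),[B_s,A_1-A_2]\psi^{(1)}(s)\rangle=-\tfrac{i}{h}\,\rho(s,t).
\]
Here the propagator $A_2$ carried by $B_s$ exactly removes the $A_2$-part, leaving only the difference $A_1-A_2$, which is precisely the quantity expected to be small in the applications. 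Integrating from $0$ to $t$ and applying the triangle inequality then produces the stated bound $|f(t)-f(0)|\le \tfrac1h\int_0^t|\rho(s,t)|\,ds$.

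The main obstacle is not the computation but its rigorous justification: $A_1$, $A_2$ and $B$ are unbounded, so differentiating $f$ and manipulating the commutators $[A_j,B_s]$ presupposes that $\psi^{(1)}(s)$ lies in appropriate domains and that each expression is densely defined. In the settings where the lemma is used, $A_1$ and $A_2$ are Schr\"odinger operators of the type of $H$ and $B=\mathrm{op}(b)$ is a pseudodifferential operator of order two, while the propagated states remain Schwartz class (indeed in the spaces $\Sigma^k$), so the symbolic calculus guarantees that every term is well defined and the formal differentiation is legitimate. A clean presentation would first establish the identity $f'(s)=-\tfrac{i}{h}\rho(s,t)$ for $\psi_0\in\mathcal S(\R^N)$, where all manipulations are unproblematic, and then, if needed, extend the resulting inequality by a density argument, using the continuity of $\rho(s,t)$ with respect to $\psi_0$ in the relevant norm.
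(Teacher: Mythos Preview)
Your proof is correct and follows essentially the same route as the paper: both introduce the interpolating function $f(s)=\langle U_1(s)\psi_0,\,U_2(t-s)^*BU_2(t-s)\,U_1(s)\psi_0\rangle$, check its endpoints, differentiate using the fundamental theorem of calculus, and identify the derivative with $\tfrac{1}{ih}\rho(s,t)$. The only difference is presentational: the paper performs the computation in one line without discussing domains, whereas you add a (welcome) remark on the unboundedness of the operators and the Schwartz-class regularity that justifies the manipulations.
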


\begin{proof}
We denote the unitary evolution operators by $U_j(t) = e^{-iA_jt/h}$ and calculate
\begin{align*}
&\<\psi^{(1)}(t),B\psi^{(1)}(t)\> - \<\psi^{(2)}(t),B\psi^{(2)}(t)\>\\
&=\<U_1(t)\psi_0,BU_1(t)\psi_0\> - \<U_2(t)\psi_0,BU_2(t)\psi_0\> \\
&=\int_0^t \frac{d}{ds} \<\psi_0,U_1(s)^*U_2(t-s)^*BU_2(t-s)U_1(s)\psi_0\> ds\\
&=\frac{1}{ih}\int_0^t \<\psi_0,U_1(s)^*[U_2(t-s)^*BU_2(t-s),A_1-A_2]U_1(s)\psi_0\> ds\\
&=\frac{1}{ih}\int_0^t \<\psi^{(1)}(s),[U_2(t-s)^*BU_2(t-s),A_1-A_2]\psi^{(1)}(s)\> ds.
\end{align*}
\end{proof}


\section{Proof of error estimates: partially flat coupling}
\label{sec:error-scales}

In this section, we prove error estimates in $L^2$-norm for general
potentials $W$  (Remark~\ref{rem:sigmaxy}), in $H^1$-norm (Proposition~\ref{prop:error-scales-energy}), and for quadratic observables (Proposition~\ref{prop:observables}).

\subsection{Proof of \textcolor{black}{Remark~\ref{rem:sigmaxy}}}
\label{sec:proof-error-scales}

  To prove the estimates of Example~\ref{ex:W}, recall that we have denoted
\[
\eta =     \|\<x\>^{-\sigma_x}\langle y\rangle
      ^{-\sigma_y}\nabla _yW\|_{L^\infty}<\infty.
  \]
  The Fundamental Theorem of Calculus yields
  \[
    W(x,y)-W(x,y') = (y-y')\cdot\int_0^1\nabla_y W\(y'+\theta(y-y')\)d\theta,
  \]
  so we have
  \begin{align*}\nonumber
    | W(x,y)-W(x,y') | &\le |y-y'| \<x\>^{\sigma_x}\eta \int_0^1\<
                         y'+\theta(y-y')\>^{\sigma_y}d\theta\\\nonumber
    &\le |y-y'| \<x\>^{\sigma_x}\max \(\<y\>,
    \<y'\>\)^{\sigma_y} \eta,
  \end{align*}
  and we replace the pointwise estimate of
  $\delta W$ with
  \[
    |\delta W(x,x',y,y')| \le |y-y'| \(\<x\>^{\sigma_x} + \<x'\>^{\sigma_x}\)
   \max \(\<y\>,
    \<y'\>\)^{\sigma_y} \eta.
  \]
  The estimate on $\Sigma_\phi$ becomes
  \begin{align*}\nonumber
    \|\Sigma_\phi\|_{L^2}^2  \le \eta^2 \int_{\R^n}\(\int_{\R^n}\(
    \<x\>^{\sigma_x} +
    \<x'\>^{\sigma_x}\)|\phi^x(t,x')|^2dx'\)^2|\phi^x(t,x)|^2dx/\|\varphi_0^x\|^4_{L^2_x}\times
    \\\nonumber
     \times
 \underbrace{   \int_{\R^{d}}\(\int_{\R^d}|y-y'|\max\(\<y\>^{\sigma_y} ,
    \<y'\>^{\sigma_y}\)
    |\phi^y(t,y')|^2dy'\)^2|\phi^y(t,y)|^2dy/\|\varphi_0^y\|^4_{L^2_y}
      }_{=: \epsilon_y(t)} ,
  \end{align*}
  and we conclude by resuming the same estimates as above:
  \[
   \int_{\R^n}\(\int_{\R^n}\(    \<x\>^{\sigma_x} + \<x'\>^{\sigma_x}\)
     |\phi^x(t,x')|^2dx'\)^2|\phi^x(t,x)|^2\frac{dx}{
     \|\varphi_0^x\|^4_{L^2_x}}
     \le 4\|\<x\>^{\sigma_x} \phi^x(t)\|_{L^2_x}^2,
  \]
  and, in view of the inequality
  \[|y-y'|\max\(\<y\>^{\sigma_y} ,
    \<y'\>^{\sigma_y}\)\le 2\max\(\<y\>^{\sigma_y} |y|,
    \<y'\>^{\sigma_y}|y'|\)\le 2\(\<y\>^{\sigma_y} |y|+
    \<y'\>^{\sigma_y}|y'|\),\]
  we find
  \[
    \epsilon_y(t)\le 16 \|\<y\>^{\sigma_y}|y| \phi^y(t)\|_{L^2_y}^2.
  \]
\hfill $\square$

\subsection{Proof of Proposition~\ref{prop:error-scales-energy}}
\label{sec:proof-error-scales-energy}


To prove error estimates in $H^1(\R^{n+d})$, we
differentiate \eqref{eq:remainder-flat} in space, and two aspects
must be considered: (i) In our framework, the operator $\nabla_{x,y}$ does not commute with $H$. 
(ii) We must estimate $\nabla_{x,y} \Sigma_\psi$ and $\nabla_{x,y} \Sigma_\phi$. 
Indeed, we compute
\[
  i\d_t \nabla_x r_\psi = H\nabla_x r_\psi +
  [\nabla_x,H]r_\psi+\nabla_x \Sigma_\psi,
\]
and
\[
  [\nabla_x,H]=\nabla_x H-H\nabla_x = \nabla_x V_1+\nabla_x W. 
\]
In the typical case where $V_1$ is harmonic, $\nabla_x V_1$ is linear
in $x$, and so $xr_\psi$ appears as a source term. Note that in the
general setting of Assumption~\ref{hyp:potentials}, $|\nabla_x
V_1(x)|\lesssim \<x\>$.
\begin{remark}
 If $\nabla_x V_1$ and
 $\nabla_x W$ are bounded, then
 Lemma~\ref{lem:energy-estimate} yields
 \[
  \|\nabla_x r_\psi(t)\|_{L^2} \le  \int_0^t \(C\|r_\psi(s)\|_{L^2}+
   \|\nabla_x\Sigma_\psi(s)\|_{L^2}\)ds.
 \]
The term $\|r_\psi(s)\|_{L^2}$ is estimated in
 Proposition~\ref{prop:error-scales}, and $
 \|\nabla_x\Sigma_\psi(s)\|_{L^2}$ is estimated below.
\end{remark}
  Multiplying \eqref{eq:remainder-flat}  by $x$,
we find similarly
\[
  i\d_t (x r_\psi) = H (x r_\psi )+
  [x,H]r_\psi+x \Sigma_\psi= H (x r_\psi )+\nabla_x r_\psi +x
  \Sigma_\psi. 
\]
Energy estimates provided by Lemma~\ref{lem:energy-estimate} applied to
the equation for $\nabla_x r_\psi$ and $x r_\psi$ then yield a closed
system of estimates:
\begin{align*}
  \|\nabla_x r_\psi(t)\|_{L^2} +  \|x r_\psi(t)\|_{L^2} &\le
  \int_0^t\(\|(\nabla_x V_1 +\nabla_x W)
  r_\psi(s)\|_{L^2} + \|\nabla_x r_\psi(s)\|_{L^2}\)ds\\\
  &\quad+ \int_0^t\(\|\nabla_x
    \Sigma_\psi(s)\|_{L^2}+ \|x \Sigma_\psi(s)\|_{L^2}\)ds\\
  &\le C  \int_0^t\(\|x
  r_\psi(s)\|_{L^2} + \|\nabla_x r_\psi(s)\|_{L^2}\)ds\\
  &\quad+ \int_0^t\(\|\nabla_x
    \Sigma_\psi(s)\|_{L^2}+ \|x \Sigma_\psi(s)\|_{L^2}\)ds,
\end{align*}
where we have used the estimate $|\nabla_x V_1 +\nabla_x W|\le
C(1+|x|)$, and the uncertainty principle (uncertainty in $x$,
Cauchy-Schwarz in $y$),
\[
  \|f\|_{L^2}^2\le \frac{2}{n}\|\nabla_x f\|_{L^2} \|x f\|_{L^2} .
\]
The Gronwall Lemma then yields
\[
  \|\nabla_x r_\psi(t)\|_{L^2} +  \|x r_\psi(t)\|_{L^2} \le \int_0^t e^{Cs}\(\|\nabla_x
    \Sigma_\psi(s)\|_{L^2}+ \|x \Sigma_\psi(s)\|_{L^2}\)ds,
  \]
  for some $C>0$. We compute
  \[
    \nabla_x\Sigma_\psi = \(\nabla_x W(x,y)-\nabla_x W(x,0)\)\psi_{\rm
      app} + \delta W(x,0,y,0)\nabla_x \psi_{\rm app}.
  \]
  The first term in controlled by $|y|\|\nabla_x\nabla_y
  W\|_{L^\infty} |\psi_{\rm app}|$. The second term is controlled like
  in Section~\ref{sec:error-flat}, by replacing $\psi_{\rm app}$ with
  $\nabla_x \psi_{\rm app}$. We can of course resume the same approach
  when considering $\nabla_y r_\psi$, and the analogue of the above
   first term is now controlled by $|x| \|\nabla_x\nabla_y
  W\|_{L^\infty} |\psi_{\rm app}|$. Finally, in the case of $r_\phi$,
  computations are similar (we do not keep track of the precise
  dependence of multiplicative constants here).

  \subsection{Proof of Proposition~\ref{prop:observables}}
\label{sec:proof-observables}
{\bf Proof.}
  We use Lemma~\ref{lem:quad-estimate} for the operators $H$ and the approximate Hamiltonian $H_{\rm bf}$,
\begin{equation}\label{eq:Hbf}
H_{\rm bf} = H_x + H_y + W(x,0) + W(0,y) - W(0,0),
\end{equation}
to obtain
\[
|e_\psi(t)| \le \int_0^t |\rho_\psi(t,s)| \,ds,
\]
where
\[
\rho_\psi(t,s) = 
\<\psi_{\rm app}(s),\left[B(t-s),H-H_{\rm bf}\right]\psi_{\rm app}(s)\>,\quad 
B(\sigma) = e^{i\sigma H}B e^{-i\sigma H}.
\]
By Egorov Theorem, see \cite[Theorem~11.1]{Zworski}, the operator $B(\sigma)$ is also a pseudodifferential operator, that is, $B(\sigma)={\rm op}(b(\sigma))$ for some function $b(\sigma)$ that satisfies the growth condition \ref{def:observables}.  We have
\[
H-H_{\rm bf} = W(x,y) - W(x,0) - W(0,y) + W(0,0)=\delta W(x,0,y,0)=:\delta W(x,y),
\]
with the notations of~\ref{sec:proof-error-scales}. Then, by the
direct estimate of Lemma~\ref{lem:comm} below,
\begin{equation*}
  \|[B(\sigma),\delta W]\psi_{\rm app}(s)\|_{L^2} \le 
  C_{b(\sigma)} 
  \(\|\nabla (\delta W)\psi_{\rm app}(s)\|_{H^1} + C_2(\delta W) 
  \|\psi_{\rm app}(s)\|_{L^2}\),
\end{equation*}
where $C_{b(\sigma)}>0$ depends on derivative bounds for the function $b(\sigma)$ and
\[
C_2(\delta W) = \sum_{2\le|\alpha|\le N_{n+d}}\|\partial^\alpha \delta W\|_{L^\infty}.
\]
We therefore obtain 
\[
|\rho_\psi(t,s)| \le \,C_{b(t-s)}\,  \Big( \|\nabla (\delta W)\psi_{\rm app}(s) \|_{H^1} 
+ C_2(\delta W)\, \|\psi_0\|_{L^2}\Big) \,\| \psi_0\|_{L^2}.
\]
Using the rectangular $n\times d$ matrix $M(x,y)$ introduced in Remark~\ref{rem:collocation}, 
the gradient of $\delta W(x,y)$ can be written as
\[
\nabla(\delta W)(x,y) = 
\left(\begin{array}{c}\nabla_x W(x,y) - \nabla_x W(x,0)\\*[1ex] \nabla_y W(x,y) - \nabla_yW(0,y)\end{array}\right)
= \left(\begin{array}{c}\int_0^1 M(x,\eta y)y\,d \eta\\*[1ex] \int_0^1 \,^t M(\theta x,y)x \,d\theta\end{array}\right)
\]
We estimate the Sobolev norm by
\begin{align*}
\|\nabla(\delta W)\psi_{\rm app}(s)\|_{H^1} &\le 
\|\nabla M\|_{L^\infty} \left(\|x\psi_{\rm app}(s)\|_{L^2}+\|y\psi_{\rm app}(s)\|_{L^2}\right) \\
&\quad+ \|M\|_{L^\infty}  \left(\|\nabla(x\psi_{\rm app}(s))\|_{L^2}+\|\nabla(y\psi_{\rm app}(s))\|_{L^2}\right), 
\end{align*}
so that integration in time provides
\begin{align*}
|e_\psi(t)| &\le C_b \|\nabla M\|_{L^\infty} \|\psi_0\|_{L^2} 
\int_0^t \left(\|x\psi_{\rm app}(s)\|_{L^2}+\|y\psi_{\rm app}(s)\|_{L^2}\right)ds \\
&\quad+ C_b \|M\|_{L^\infty} \|\psi_0\|_{L^2}  \int_0^t \left(\|\nabla(x\psi_{\rm app}(s))\|_{L^2}+\|\nabla(y\psi_{\rm app}(s))\|_{L^2}\right) ds\\
& \quad+ C_b\, C_2(\delta W)\, t \, \|\psi_0\|_{L^2}^2, 
\end{align*}
where the constant $C_b = \max_{\sigma\in[0,t]} C_{b(\sigma)}$ depends on derivatives of $b$.
In the mean-field case, the approximate Hamiltonian is time-dependent, 
\begin{equation}\label{def:Happc}
H_{\rm mf}(t) = H_x + H_y + \<W\>_y(t) +  \<W\>_x(t) - \<W\>(t).
\end{equation}
The difference of the Hamiltonians is also a function, which is now time-dependent, $H-H_{\rm mf}(t)= W+\langle W\rangle (t) -\langle W\rangle_x (t) -\langle W\rangle_y (t)$.
However, it is easy to check that a similar estimate can be performed, leading to an analogous conclusion. 
\hfill $\square$

\subsection{Commutator estimate}\label{sec:commutator}

We now explain the commutator estimate used in the previous subsection:
\begin{lemma}\label{lem:comm}
Let $N\ge 1$ and $b=b(z,\zeta)$ be a smooth function on $\R^{2N}$ satisfying the H\"ormander growth condition~\ref{def:observables}. Let $\delta W$ be a smooth function on $\R^N$ with bounded derivatives. 
Then, there exist constants $C_b>0$ and $M_N>0$ such that
\[
\|[\op(b),\delta W]\psi\|_{L^2} \le C_b\, \Big(\|\nabla(\delta W)\psi\|_{H^1} + \sum_{2\le|\alpha|\le M_N}\|\partial^\alpha(\delta W)\|_\infty \|\psi\|_{L^2}\Big)
\]
for all $\psi\in H^1(\R^N)$. 
\end{lemma}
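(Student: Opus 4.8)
I would treat the commutator through the Weyl symbolic calculus, isolating a single order-one contribution that carries the factor $\nabla(\delta W)$ and folding everything else into order-zero operators whose $L^2$ norms are controlled by higher derivatives of $\delta W$. Since $\delta W=\delta W(z)$ is independent of $\zeta$, its Weyl symbol is of order $0$, while $b$ is of order $2$. The composition and commutator formulas of the Weyl calculus (see e.g. \cite[Chapter~I.2]{AliGer} or \cite{Zworski}) then show that $[\op(b),\delta W]$ is itself a pseudodifferential operator, whose symbol admits the asymptotic expansion
\[
\frac{1}{i}\{b,\delta W\}+r=\frac{1}{i}\,\nabla_\zeta b\cdot\nabla(\delta W)+r,
\]
where the principal part is of order $1$ and the remainder $r$ is of order $\le 0$, being a finite sum of terms of the form $(\partial_\zeta^\alpha b)(\partial_z^\alpha\delta W)$ with $|\alpha|\ge 2$ (each $\zeta$-derivative of $b$ lowering the symbol order by one), together with the controlled tail of the truncated expansion.

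\textbf{Leading term.} First I would handle $\frac{1}{i}\op\big(\nabla_\zeta b\cdot\nabla(\delta W)\big)$, the key structural step being to factor the multiplier $\nabla(\delta W)$ to the right. Writing the dot product componentwise and applying the composition formula once more, one has, with $M_g$ denoting multiplication by $g$,
\[
\op\big(\partial_{\zeta_j}b\cdot\partial_{z_j}\delta W\big)=\op(\partial_{\zeta_j}b)\circ M_{\partial_{z_j}\delta W}+R_j,
\]
where the symbol of $R_j$ is $-\tfrac{1}{2i}\{\partial_{\zeta_j}b,\partial_{z_j}\delta W\}+\cdots$, an order-$0$ symbol built from $\partial_\zeta^2 b$ and $\partial_z^2(\delta W)$. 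Since $\op(\partial_{\zeta_j}b)$ is of order $1$, it maps $H^1$ into $L^2$ boundedly, so that
\[
\Big\|\sum_j\op(\partial_{\zeta_j}b)\big((\partial_{z_j}\delta W)\,\psi\big)\Big\|_{L^2}\le\sum_j\big\|\op(\partial_{\zeta_j}b)\big\|_{H^1\to L^2}\,\|(\partial_{z_j}\delta W)\psi\|_{H^1}\lesssim C_b\,\|\nabla(\delta W)\,\psi\|_{H^1},
\]
which is precisely the first term of the claimed bound. Note that the factoring is essential: it is what lets $\nabla(\delta W)$ act as a multiplier on $\psi$ before the order-one operator, so that the flatness of the coupling is recorded through $\nabla(\delta W)\psi$ rather than through $\|\nabla(\delta W)\|_\infty\|\psi\|_{H^1}$.

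\textbf{Remainders.} It then remains to bound the order-$0$ operators: the remainders $R_j$, the term $r$ above, and the error of truncating the Moyal expansion at a finite order $M_N$. Each of these has a symbol that is a finite sum of products $\partial_\zeta^\alpha b$ (of order $2-|\alpha|\le 0$) with $\partial_z^\beta(\delta W)$, $|\beta|\ge 2$, hence satisfies the H\"ormander bound of order $0$ with constants proportional to $\sum_{2\le|\beta|\le M_N}\|\partial^\beta(\delta W)\|_{\infty}$. By the Calder\'on--Vaillancourt theorem such operators are bounded on $L^2$ by finitely many symbol seminorms, which here requires $\partial^\beta(\delta W)$ for $|\beta|\le M_N$, with $M_N$ the dimension-dependent number of derivatives entering that estimate. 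Adding the two contributions yields the stated inequality. \emph{The main obstacle} is exactly this last quantitative control: one must truncate the symbolic expansion at finite order, verify that every surviving term is genuinely of order $\le 0$ (using that each $\zeta$-derivative lowers the order of the order-$2$ symbol $b$ and that $\delta W$ has bounded derivatives), and convert $L^2$-boundedness into a bound through finitely many seminorms. This is what fixes the integer $M_N$ and makes $C_b$ depend on only finitely many derivative bounds of $b$.
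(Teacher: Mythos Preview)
Your argument is correct and reaches the same structural decomposition as the paper---an order-one piece $\op(\partial_{\zeta_j}b)$ acting on $(\partial_{z_j}\delta W)\psi$, together with order-zero remainders carrying $\partial^\alpha(\delta W)$ with $|\alpha|\ge 2$---but it gets there by a different, more abstract route.

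The paper works directly at the level of the oscillatory integral: it Taylor-expands $\delta W(z)-\delta W(z')$ \emph{around the point $z'$}, so that the first-order term $\nabla(\delta W)(z')\cdot(z-z')$ already has the gradient sitting on $\psi(z')$; a single integration by parts via $(z-z')e^{i\zeta\cdot(z-z')}=-i\nabla_\zeta e^{i\zeta\cdot(z-z')}$ then yields $f_1=\sum_j i\,\op(\partial_{\zeta_j}b)\big((\partial_{z_j}\delta W)\psi\big)$ in one stroke, and a second integration by parts on the integral remainder $\delta R_2$ produces the order-zero tail. Your approach first computes the commutator symbol $\tfrac{1}{i}\nabla_\zeta b\cdot\nabla(\delta W)$ via the Moyal expansion (where $\nabla(\delta W)$ is evaluated at the symbol variable $z$, not at $z'$), and must then invoke the composition formula a second time to push $\nabla(\delta W)$ to the right as a multiplier on $\psi$; this creates the additional remainders $R_j$. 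Both arrive at the same leading estimate and both control the remainder by Calder\'on--Vaillancourt-type bounds. The paper's choice of base point $z'$ in the Taylor expansion is the small trick that removes your extra factorization step, making the argument slightly shorter and entirely elementary; your symbolic-calculus proof is more systematic and would generalise more readily if $\delta W$ depended on $\zeta$ as well.
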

\begin{proof}
We explicitly write the commutator as
\begin{align*}
&[\op(b),\delta W]\psi(z) = \\
&(2\pi)^{-N} \int_{\R^{2N}} 
b\left(\frac {z+z'}{2} ,\zeta\right) e^{i\zeta\cdot (z-z')} \left(\delta W(z')-\delta W(z)\right) \psi(z') \,d\zeta dz'.
\end{align*}
We Taylor expand the function $\delta W(z)$ around the point $z'$, so that
\[
\delta W(z)-\delta W(z') =\nabla(\delta W)(z')\cdot(z-z') + (z-z')\cdot \delta R_2(z,z')(z-z')
\] 
with
\[
\delta R_2(z,z') = \int_0^1 (1-\vartheta)\nabla^2(\delta W)(z'+\vartheta(z-z')) \,d\vartheta.
\]
Corresponding to the above decomposition, we write
$[\op(b),\delta W]\psi(z) = f_1(z) + f_2(z)$
and estimate the two summands separately. 
We observe that $(z-z') e^{i\zeta\cdot (z-z')} = -i\nabla_\zeta e^{i\zeta\cdot (z-z')}$ and perform an integration by parts to obtain
\begin{align*}
&\int_{\R^{2N}} 
b\left(\frac {z+z'}{2} ,\zeta\right) e^{i\zeta\cdot (z-z')}\,\nabla(\delta W)(z')\cdot(z-z')\, \psi(z') \,d\zeta dz' \\
&=i\int_{\R^{2N}} 
\nabla(\delta W)(z')\cdot \nabla_\zeta b\left(\frac {z+z'}{2} ,\zeta\right) e^{i\zeta\cdot (z-z')}  \psi(z') \,d\zeta dz'
\end{align*}
Therefore, 
\[
\|f_1\|_{L^2}\le  C_b\,\|\nabla(\delta W)\psi\|_{H^1},
\]
where the constant $C_b>0$ depends on derivative bounds of the function $b$. For the remainder 
term of the above Taylor approximation we write
\begin{align*}
&\int_{\R^{2N}} 
b\left(\frac {z+z'}{2} ,\zeta\right) e^{i\zeta\cdot (z-z')}\,(z-z')\cdot \delta R_2(z,z')(z-z')\, \psi(z') \,d\zeta dz' \\
&=\int_{\R^{2N}} 
{\rm tr}\left(\delta R_2(z,z')\nabla_\zeta^2 b\left(\frac {z+z'}{2} ,\zeta\right)\right) e^{i\zeta\cdot (z-z')}  \psi(z') \,d\zeta dz',
\end{align*}
and obtain that
\[
\|f_2\|_{L^2} \le 
C_b' \sum_{2\le|\alpha|\le M_N}\|\partial^\alpha(\delta W)\|_\infty \|\psi\|_{L^2},
\]
where $C_b'>0$ depends on derivative bounds of $b$, and $M_N>0$ depends on the dimension~$N$.
\end{proof}
\subsection{Proof of energy conservation}\label{sec:proof-energy}

Here we provide an elementary ad-hoc proof for energy conservation of the time-dependent Hartree 
approximation, Lemma~\ref{lem:energy}.

\begin{proof}
 A first observation is that 
\[
\<\phi_{\rm app}(t),H_{\rm mf}(t)\phi_{\rm app}(t)\> = \<\psi_0,H_{\rm mf}(0)\psi_0\>
\quad {\rm  for\  all}\ t\ge 0.
\]
Indeed, 
\begin{align*}
\frac{d}{dt}\<\phi_{\rm app}(t),H_{\rm mf}(t)\phi_{\rm app}(t)\> &= 
\<\phi_{\rm app}(t),\partial_t H_{\rm mf}(t)\phi_{\rm app}(t)\> \\
&= \<\phi_{\rm app}(t),\partial_t W_{\rm app}(t)\phi_{\rm app}(t)\>
\end{align*}
with 
$\displaystyle{W_{\rm app}(t)= \langle W\rangle_y(t)+ \langle W\rangle_x(t)-\<W\>(t).}$
We deduce 
\begin{align*}
&\frac{d}{dt}\<\phi_{\rm app}(t),H_{\rm mf}(t)\phi_{\rm app}(t)\> \\
&= 
\int W(x,y) \left(\d_t|\phi^x(t,x)|^2\ |\phi^y(t,y)|^2 + |\phi^x(t,x)|^2\ \d_t |\phi^y(t,y)|^2\right) dxdy\\ 
&\quad - \int W(x,y)\d_t\left(|\phi^x(t,x)|^2\ |\phi^y(t,y)|^2\right)dx dy = 0,
\end{align*}
where we have used the self-adjointness of $H_{\rm mf}(t)$ and norm-conservation in the multiplicative components. 
Secondly, since
\begin{align*}
\<\psi_0,W_{\rm app}(0)\psi_0\> &= \<\psi_0,\left(\<W\>_x(0) + \<W\>_y(0) - \<W\>(0)\right)\psi_0\>\\
&= 2\<W\>(0) - \<W\>(0) = \<W\>(0),
\end{align*}
the approximate energy coincides with the actual energy, and we 
obtain the aimed for result.
\end{proof}


\section{Proof of error estimates in the semiclassical r\'egime}
\label{sec:proof-error-semi}
Here we present the proofs of the semiclassical estimates given in Proposition~\ref{prop:error-semi} and Proposition~\ref{prop:observables-semi}.

\subsection{Error estimates for the wave function} \label{subsec:error-semi}

In  this section, we prove Proposition~\ref{prop:error-semi}, and 
comment on the constants $K_0,K_1$, which may be analyzed more
explicitly in some cases. 

\subsubsection{Approximation  by partial Taylor expansion} $ $

\bigskip\noindent
{\bf Proof.}
Section~\ref{sec:taylor-semi} defines an approximate solution of the from  
\[
\psi_{\rm app}^\eps(t,x,y) = \eps^{-d/4}  
e^{(ip(t)\cdot (y-q(t))/\eps +iS(t)/\eps}  u_2\(t,\frac{y-q(t)}{\sqrt \eps}\)
 \psi_1^\eps(t,x)
\]
with
\[
  i\eps\d_t \psi_1^\eps + \frac{1}{2}\Delta_x \psi_1 ^\eps=
  \(V_1(x)+W(x,q)\)\psi_1^\eps\quad;\quad \psi^\eps_{1\mid t=0} = \varphi_0^x
  \]
and
\begin{align*}
 & i\d_t u_2 + \frac{1}{2}\Delta_z u_2 = \frac{1}{2}\<z,\nabla^2
  V_2\(q\)z\>u_2\quad ;\quad u_{2\mid t=0} = a,\\
 & \dot q = p,\, q(0)=q_0,\quad \dot p = -\nabla V_2(q),\;    p(0)=p_0,
\;\; S(t) = \int_0^t \(\frac{|p(s)|^2}{2}-V_2(q(s))\)ds.
  \end{align*}
  It  solves the equation
\[
   i\eps \d_t \psi_{\rm app}^\eps   +\frac{1}{2}\Delta_x
         \psi_{\rm app}^\eps +\frac{\eps^2}{2}\Delta_y\psi_{\rm app}^\eps
          -V \psi_{\rm app}^\eps =\eps^{-d/4}
          e^{(ip\cdot z/\sqrt\eps +iS/\eps}\( r_1^\eps +r_2^\eps\),
\]
where the remainder $r_1^\eps$ is due to the Taylor expansion in $V_2$, and
satisfies the pointwise estimate
\[
|  r_1^\eps(t,x,z) |\le  \frac{1}{6} \times \eps^{3/2} \|\nabla^3 V_2\|_{L^\infty_y}
|\psi_1^\eps(t,x)|\times |z|^3 |u_2(t,z)|
\]
while the remainder $r_2^\eps$ is due to the Taylor expansion in $W$, and
satisfies the pointwise estimate
\[
|  r_2^\eps(t,x,z) |\le  \sqrt\eps \  \|\nabla_y W\|_{L^\infty}
|\psi_1^\eps(t,x)|\times| z| |  u_2(t,z)|.
\]
This implies for the $L^2$-norm,
\begin{align*}
  & \|r_1^\eps(t)\|_{L^2}\le \frac{\eps^{3/2}}{6} \|\nabla^3
    V_2\|_{L^\infty_y} \|\varphi_0^x\|_{L^2_x}\||z|^3
    u_2(t)\|_{L^2_z},\\
   & \|r_2^\eps(t)\|_{L^2}\le  \sqrt\eps\, \|\nabla_y W\|_{L^\infty} \|\varphi_0^x\|_{L^2_x}\|z
    u_2(t)\|_{L^2_z}.
\end{align*}
Lemma~\ref{lem:energy-estimate} then yields, with now $h=\eps$,
\begin{align*}
  \|\psi^\eps(t)- \psi_{\rm app}^\eps(t)\|_{L^2}& \le \frac{\sqrt\eps}{6} 
 \|\nabla^3    V_2\|_{L^\infty_y}  \|\varphi_0^x\|_{L^2_x} \int_0^t
  \||z|^3 u_2(s)\|_{L^2_z}ds \\
&\quad  +\frac{\|\nabla_y W\|_{L^\infty}}{\sqrt\eps}  
  \|\varphi_0^x\|_{L^2_x}  \int_0^t\| |z|
  u_2(s)\|_{L^2_z}ds .
\end{align*}
According to the signature of $\nabla^2 V_2(q(t))$, the quantities
$\||z|^3 u_2(s)\|_{L^2_z}$ and $\| |z|
  u_2(s)\|_{L^2_z}$ may be bounded uniformly in $s\ge 0$ or
  not. For instance, they are bounded if $\nabla^2 V_2$ is uniformly
  positive definite, or at least uniformly positive definite along the
  trajectory $q$. On the other hand, we always have an exponential
  bound, even if it may not be sharp,
  \[
    \||z|^3 u_2(s)\|_{L^2_z}+\| |z|
  u_2(s)\|_{L^2_z}\le C_0 e^{C_1s},
\]
for some constants $C_0,C_1>0$. This control is sharp in the case
where $\nabla^2 V_2$ is uniformly
  negative definite. See
    e.g. \cite[Lemma~10.4]{CaBook2} for a proof of the exponential
    control, and \cite[Section~10.5]{CaBook2} for a discussion on its
    optimality. In particular, for bounded time intervals, the
  (relative) error is
  small if $\|\nabla_y W\|_{L^\infty}\ll \sqrt\eps\ll 1$.
  \hfill $\square$

  \begin{remark}
    If $\nabla_y W$ is not bounded, e.g. $\nabla_y W (x,y) = \eta
    \<x\>^\gamma$, then we can replace the previous error estimate with
   \begin{align*}
  \|\psi^\eps(t)- \psi^\eps_{\rm app}(t)\|_{L^2}& \le \frac{\sqrt\eps}{6} 
 \|\nabla^3    V_2\|_{L^\infty_y} 
 \|\varphi_0^x\|_{L^2_x}  \int_0^t
  \||z|^3 u_2(s)\|_{L^2_z}ds \\
&\quad  +\frac{\eta}{\sqrt\eps}  
  \int_0^t\|\<x\>^\gamma\psi_1^\eps(s)\|_{L^2_x}  \||z|
  u_2(s)\|_{L^2_z}ds .
\end{align*}
  In other words, the cause for the unboundedness of $  \nabla_y W$ is
  transferred to a weight for $\psi_1^\eps$. Similarly, if  $  \nabla_y W$ is
  unbounded in $y$, we may change the weight in the terms
  $\||z|^k u_2\|_{L^2_z}$, after substituting $y$ with $q+z\sqrt\eps$. 
  \end{remark}
  
\subsubsection{Approximation by partial averaging} $ $

\begin{proof}
The semiclassical approximation obtained by partial averaging reads:
\[
\psi_{\rm app}^\eps (t,x,y) = \eps^{-d/4} e^{ip(t)\cdot (y-q(t))/\eps +iS(t)/\eps}  u_2\(t,\frac{y-q(t)}{\sqrt \eps}\)
 \psi_1^\eps(t,x)
 \]
 with
 \[ 
   i\eps\d_t \psi_1 ^\eps+ \frac{1}{2}\Delta_x \psi_1^\eps =
  \(V_1(x)+\<W(x,\cdot)\>_y(t)\)\psi_1^\eps\quad;\quad \psi_{1\mid t=0} ^\eps= \varphi_0^x
  \]
  and
  \begin{align*}
 & i\d_t u_2 + \frac{1}{2}\Delta_z u_2 = \frac{1}{2}\ z\cdot\<\nabla^2
  V_2\>_y\!(t)z \ u_2\quad ;\quad u_{2\mid t=0} = a,\\
  &  \dot q = p,\quad q(0)=q_0,\quad \dot p = -\langle\nabla V_2\rangle_y(t),\quad
    p(0)=p_0,\;\;  \dot S(t) = \frac{|p(t)|^2}{2}- \langle V_2\rangle_y(t).
  \end{align*}
To estimate the size of $\tilde r_1$ and $\tilde r_2$ introduced in Section~\ref{sec:average-semi}, we might argue again via Taylor expansion. 
Indeed,
\begin{align*}\nonumber
\|a\|_{L^2}^2\<V_2\>_y &= \int V_2(q(t)+\sqrt\eps z)\  |u_2(t,z)|^2 dz \\\nonumber
&= 
V_2(q(t)) + \sqrt\eps\, \nabla V_2(q(t))\cdot\int z |u_2(t,z)|^2 dz + r_3^\eps(t), 
\end{align*}
where 
\[
|r_3^\eps(t)| \le \frac{\eps}{2} \|\nabla^2V_2\|_{L^\infty} \left\||z|
  u_2(t,z)\right\|^2_{L^2_z}. 
\]
Hence, we have for all averages $f = V_2,\nabla V_2,\nabla^2 V_2,W(x,\cdot)$ that
\[
\<f\>_ y (t)= f\(q(t)\) + \O(\sqrt\eps),
\]
where the error constant depends on moments of $|u_2|^2$. In particular, if $u_2(0)$ is Gaussian, 
the odd moments of $|u_2(t,z)|^2$ vanish, and the above estimate improves to $\O(\eps)$. 
Hence, the $L^2$-norm of $\tilde r_1$ is $\O(\sqrt\eps)$  close to
the $L^2$-norm of $r_1$, and the $L^2$-norm of $\tilde r_2$ is
$\O(\eta\sqrt\eps)$, $\eta= \|\nabla_y W\|_{L^\infty}$,  close to 
the $L^2$-norm of $r_2$ (with each time an extra $\sqrt\eps$ gain in the above
mentioned Gaussian case). In particular, the order of magnitude for the
difference between exact and approximate solution is the same as in
the previous subsection, only multiplicative constants are
affected. We emphasize that the constants $C_0$ and $C_1$ from the
previous subsection are in general delicate to assess. On the other
hand, in specific cases (typically when $u_2$ is Gaussian and
$\nabla^2 V_2$ is known), they can be computed rather explicitly. 
\end{proof}

\subsection{Error estimates for quadratic  observables}\label{sec:error-quad-semi}

The proof of Proposition~\ref{prop:observables-semi} is discussed in the next two sections. 

\subsubsection{ Approximation by  Taylor expansion} $ $ 

\begin{proof}
  Taylor expansion yields a time-dependent Hamiltonian $ H_{\rm app} ^\eps= H^\eps_{\rm te}$ with 
\begin{align*}
 H^\eps_{\rm te}&:= -\frac{1}{2}\Delta_x-\frac{\eps^2}{2}\Delta_y
                +V_1(x) + W(x,q)+V_2(q) +(y-q)\cdot \nabla V_2(q)\\
  &\quad+
  \frac{1}{2}\<y-q,\nabla^2 V_2(q)(y-q)\>,
\end{align*}
where $q=q(t)$. In particular, the difference $  H^\eps- H^\eps_{\rm te}$ is a function,
\begin{align*}
  H^\eps- H^\eps_{\rm te} & = W(x,y)-W(x,q) +V_2(y) - V_2(q)-(y-q)\cdot \nabla
  V_2(q)\\
 &\quad - \frac{1}{2}\<y-q,\nabla^2 V_2(q)(y-q)\>=:\delta W(t,x,y).
\end{align*}
In view of  Lemma~\ref{lem:quad-estimate}, if $B={\rm op}_\eps(b)$
with $b\in C^\infty_c (\R^{2d})$, it
yields (a posteriori estimate)
\[
  \left| \<\psi^\eps(t),B\psi^\eps(t)\> -\<\psi^\eps_{\rm app}(t),B\psi^\eps_{\rm
  app}(t)\>\right| \le \frac{1}{\eps}\int_0^t |\rho^\eps(t,x)|ds,
\]
where
\[
  \rho^\eps(t,s) = \<\psi^\eps_{\rm app}(s), [B(t-s),\delta W (s)]\psi^\eps_{\rm
             app}(s)\>,\quad 
  B(\sigma) = e^{i\sigma H/\eps} B e^{-i\sigma H/\eps}.
\]
By Egorov Theorem~\cite[Theorem~11.1]{Zworski},  $B(\sigma) = \eps\, \op_\eps(b(\sigma))$ for a
function $b(\sigma)\in C^\infty_c(\R^d)$. Therefore,  
by semiclassical calculus,
\[
\frac{1}{i\eps}[B(t-s),\delta W(s)] = 
\op_\eps \(\{b(t-s),\delta W(s)\}\) + \eps^2 \op_\eps \(r^\eps(s,t)\),
\]
where $\|\op_\eps(r^\eps(s,t))\|_{\mathcal L(L^2)}$ is bounded uniformly in $\eps$, whence the estimate of Proposition~\ref{prop:observables-semi}.
\end{proof}
\subsubsection{Approximation by partial averaging} $ $

\begin{proof}
The time-dependent Hamiltonian is ${H}_{\rm app}^\eps={H}_{\rm pa}^\eps$ with 
\begin{align*}
{H}_{\rm pa}^\eps &= -\frac{1}{2}\Delta_x-\frac{\eps^2}{2}\Delta_y
                +V_1(x) +\langle W(x,\cdot)\rangle_y +\langle V_2\rangle_y(t) +(y-q)\cdot\langle  \nabla V_2\rangle_y(t)\\
  &\quad+
  \frac{1}{2}(y-q)\cdot \langle \nabla^2 V_2\rangle_y(t) (y-q),
\end{align*}
where $q=q(t)$. In particular,  as in the preceding case, the difference $  H^\eps- {H}_{\rm pa}^\eps$ is a time-dependent function
\begin{align*}
 H^\eps- {H}^\eps_{\rm pa}  &= W(x,y)- \langle W(x,\cdot)\rangle_y +V_2(y) -\langle V_2\rangle_y(t) \\
  &\quad -(y-q)\cdot\langle  \nabla V_2\rangle_y(t)
  - \frac{1}{2}(y-q)\cdot \langle \nabla^2 V_2\rangle_y(t) (y-q)=:\widetilde{ \delta W}(t,x,y),
\end{align*}
and the arguments developed above also apply. 
\end{proof}

\subsection{Time-adiabatic approximation}
The evolution equations for the quantum part of the system, equations~\eqref{eq:psi1} and~\eqref{eq:psi1av}, can be written as an adiabatic problem:
\[
i\eps \partial_t \psi_1^\eps(t) =  \mathfrak h(t) \psi_1^\eps(t),\;\; \psi_1^\eps(0)=\varphi_0^x,
\]
where $\mathfrak h(t)$ is one of the time-dependent self-adjoint operators on $L^2(\R^n)$
\[
\mathfrak h_{\rm te}(t) = -\frac 1 2 \Delta  +V_1(x) + W(x,q(t))\;\;\mbox{and}\;\;
\mathfrak h_{\rm pa}(t) = -\frac 1 2 \Delta  +V_1(x) + \langle W(x,.)\rangle_y(t).\]
We assume here that $\mathfrak h(t)$ has a compact resolvent and thus, that its spectrum consists in a sequence of time-dependent eigenvalues 
$$\eigen_1(t) \le \eigen_2(t)\le \cdots\le \eigen_k(t)\Tend{k}{ +\infty} \infty.$$
We also assume that some eigenvalue $\eigen_{j}(t)$ is separated from the remainder of the spectrum for all $t\in\R$ and that the initial datum $\varphi_0^x$ is in the eigenspace of $\mathfrak h(0)$ for the eigenvalue $\eigen_j(0)$:
\begin{equation} \label{eq:vp}
\mathfrak h(0) \varphi_0^x= \eigen_j(0) \varphi_0^x.
\end{equation}
Then adiabatic theory as developed by Kato~\cite{K1}  states that $\psi_1(t)$
stays in the eigenspace of $\eigen_j(t)$ on finite time, up to a phase. 

\begin{proposition}[Kato~\cite{K1}]\label{prop:Kato}
Assume we have~\eqref{eq:vp} and that $ \eigen_j(0)$ is a simple  eigenvalue of $\mathfrak h(0)$   such that there exists $\delta_0>0$ for which
\[d\left( \{ \eigen_j(t)\} , {\rm Sp} ( \mathfrak h(t)) \setminus \{\eigen_j(t)\}\right)\geq \delta_0.\]
Denote by $\Phi_j^x(t)$ a family of normalized eigenvectors of $\mathfrak h(t)$ such that 
\[\Phi_j^x(0)= \varphi_0^x,\quad\< \Phi^x_j(t), \partial_t \Phi_j^x(t)\>=0.\]
Then, for all  $T>0$, there exists a constant $C_T>0$ such that  
\[\left\| \psi_1^\eps(t) - e^{-\frac i\eps \int_0^t \eigen_j(s) ds } \Phi^x_j(t,x ) \right\| _{L^2_x} \le C_T \eps.\]
\end{proposition}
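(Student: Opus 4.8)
The plan is to prove Proposition~\ref{prop:Kato} by Kato's time-adiabatic argument, adapted to the semiclassical time scale $i\eps\d_t\psi_1^\eps = \mathfrak h(t)\psi_1^\eps$. First I would introduce the Riesz spectral projector
\[
P(t) = \frac{1}{2\pi i}\oint_{\Gamma(t)}(\zeta - \mathfrak h(t))^{-1}\,d\zeta,
\]
where $\Gamma(t)$ is a contour enclosing only $\eigen_j(t)$; the uniform gap $\delta_0$ guarantees that $P(t)$ is a well-defined rank-one orthogonal projector, smooth in $t$ with bounded $\dot P(t),\ddot P(t)$. Here the $t$-regularity is inherited from the smoothness of $q(t)$ and of the averages entering $\mathfrak h_{\rm te}$ and $\mathfrak h_{\rm pa}$; in particular $\dot{\mathfrak h}(t)$ is a bounded multiplication operator since $\nabla_y W$ is bounded and $\dot q=p$ is bounded on $[0,T]$. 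I would then set $K(t)=i[\dot P(t),P(t)]$, a bounded self-adjoint operator that is purely off-diagonal ($PKP=(\mathbf 1-P)K(\mathbf 1-P)=0$), and define the adiabatic evolution $U_A(t)$ by $i\dot U_A=KU_A$, $U_A(0)=\mathbf 1$. The identity $\dot P = -i[K,P]$ gives the intertwining relation $P(t)U_A(t)=U_A(t)P(0)$, so $\Phi_j^x(t):=U_A(t)\varphi_0^x$ stays normalized and in the eigenspace of $\eigen_j(t)$; since $PKP=0$ one checks $\<\Phi_j^x(t),\d_t\Phi_j^x(t)\>=-i\<\Phi_j^x,K\Phi_j^x\>=0$, so this is exactly the parallel-transported eigenvector of the statement.

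Next I would test the ansatz $\psi_{\rm ad}^\eps(t)=e^{-\frac i\eps\int_0^t\eigen_j(s)ds}\Phi_j^x(t)$. Because $\mathfrak h\Phi_j^x=\eigen_j\Phi_j^x$, the residual is
\[
R^\eps(t):=i\eps\d_t\psi_{\rm ad}^\eps-\mathfrak h\,\psi_{\rm ad}^\eps = \eps\,e^{-\frac i\eps\int_0^t\eigen_j}\,K(t)\Phi_j^x(t),
\]
which is off-diagonal and of size $O(\eps)$. Writing $r^\eps=\psi_1^\eps-\psi_{\rm ad}^\eps$ and using Duhamel,
\[
r^\eps(t)=\frac i\eps\int_0^t U^\eps(t,s)R^\eps(s)\,ds=i\int_0^t U^\eps(t,s)\,e^{-\frac i\eps\int_0^s\eigen_j}K(s)\Phi_j^x(s)\,ds,
\]
where $U^\eps(t,s)$ is the propagator of $i\eps\d_t=\mathfrak h$. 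Applying Lemma~\ref{lem:energy-estimate} directly yields only the trivial $O(1)$ bound, so the crucial step — and the main obstacle — is to recover one extra power of $\eps$ by integration by parts in $s$, exploiting that the off-diagonal residual oscillates at a frequency separated from $\eigen_j$ by the gap.

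The gain comes through the reduced resolvent $A(s)=(\mathfrak h(s)-\eigen_j(s))^{-1}(\mathbf 1-P(s))$, bounded by $\delta_0^{-1}$ and smooth in $s$. Since $K(s)\Phi_j^x(s)=(\mathbf 1-P(s))K(s)\Phi_j^x(s)$, I can write $K\Phi_j^x=(\mathfrak h-\eigen_j)g$ with $g(s):=A(s)K(s)\Phi_j^x(s)$ bounded together with $\dot g(s)$. Using
\[
\d_s\!\left(U^\eps(t,s)\,e^{-\frac i\eps\int_0^s\eigen_j}\right)=\frac i\eps\,U^\eps(t,s)(\mathfrak h(s)-\eigen_j(s))\,e^{-\frac i\eps\int_0^s\eigen_j},
\]
integrating by parts converts the prefactor $1/\eps$ into an overall factor $\eps$, with boundary and interior terms controlled by $\sup_{[0,T]}(\|g\|+\|\dot g\|)$ and the unitarity of $U^\eps$. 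This gives $\|r^\eps(t)\|_{L^2_x}\le C_T\eps$ on $[0,T]$, which is the claim. The only delicate point is the regularity of $g$ and $\dot g$, which reduces to bounding $\dot P$, $\ddot P$ and $\dot A$ via the contour formula and the gap, using that $\dot{\mathfrak h}(t)$ is a bounded perturbation in the present setting.
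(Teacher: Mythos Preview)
The paper does not prove this proposition; it is stated with attribution to Kato~\cite{K1} and used as a black box (the surrounding text simply says ``adiabatic theory as developed by Kato~\cite{K1} states that~$\psi_1^\eps(t)$ stays in the eigenspace of~$\eigen_j(t)$ on finite time, up to a phase''). So there is no proof in the paper to compare against.

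That said, your argument is exactly Kato's classical proof of the adiabatic theorem, correctly transcribed to the rescaled equation $i\eps\d_t\psi_1^\eps=\mathfrak h(t)\psi_1^\eps$: the Riesz projector $P(t)$, the parallel-transport generator $K=i[\dot P,P]$ producing the gauge-fixed eigenvector $\Phi_j^x(t)$, the $O(\eps)$ off-diagonal residual $\eps K\Phi_j^x$, and the recovery of the extra factor of $\eps$ by writing $K\Phi_j^x=(\mathfrak h-\eigen_j)A K\Phi_j^x$ via the reduced resolvent and integrating by parts against $\d_s\big(U^\eps(t,s)e^{-\frac{i}{\eps}\int_0^s\eigen_j}\big)$. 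The regularity checks you flag are the right ones, and in the present setting $\dot{\mathfrak h}(t)$ and $\ddot{\mathfrak h}(t)$ are indeed bounded multiplication operators (they involve $\nabla_yW$, $\nabla_y^2W$ and the bounded trajectory data $\dot q,\ddot q$ on $[0,T]$), so $\dot P$, $\ddot P$ and $\dot A$ are bounded via the contour formula and the gap. Your sketch is correct and complete in outline.
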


In contrast to the Born-Oppenheimer point of view recalled in Remark~\ref{rem:BO}, 
we obtain the following time-adiabatic extension for our wave-packet approximation:

\begin{corollary}\label{cor:BO}
In the setting of Proposition~\ref{prop:error-semi} and Proposition~\ref{prop:Kato}, we obtain the following approximate solution
\begin{align*}
\psi^\eps_{\rm app} (t,x,y)= 
e^{-\frac i\eps \int_0^t \eigen_j(s) ds +\frac i\eps S(t) +\frac i\eps p(t)\cdot (y-q(t))} \Phi^x_j(t,x)u_2\left(t,\frac{y-q(t)}{\sqrt\eps}\right).
\end{align*}
\end{corollary}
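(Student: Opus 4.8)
The plan is to obtain the corollary as a direct assembly of the two approximation results already at our disposal: the semiclassical estimate of Proposition~\ref{prop:error-semi} and the time-adiabatic estimate of Proposition~\ref{prop:Kato}. Recall that the partial Taylor approximation of Section~\ref{sec:taylor-semi} has the tensor form $\psi_{\rm app}^\eps(t,x,y)=\psi_1^\eps(t,x)\,\psi_2^\eps(t,y)$, where the semiclassical wave-packet factor $\psi_2^\eps$ is given by \eqref{eq:scaling-semi} and carries the phases $e^{i(p(t)\cdot(y-q(t))+S(t))/\eps}$ together with the prefactor $\eps^{-d/4}$, while $\psi_1^\eps$ solves \eqref{eq:psi1}, that is $i\eps\d_t\psi_1^\eps=\mathfrak h_{\rm te}(t)\psi_1^\eps$ with $\mathfrak h_{\rm te}(t)=-\tfrac12\Delta_x+V_1(x)+W(x,q(t))$. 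This is precisely the adiabatic Cauchy problem treated in Proposition~\ref{prop:Kato}, so the hypotheses there (compact resolvent, spectral gap $\delta_0$, simplicity of $\eigen_j$) apply verbatim.

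First I would introduce the adiabatically corrected approximation $\tilde\psi_{\rm app}^\eps(t,x,y):=\bigl(e^{-\frac i\eps\int_0^t\eigen_j(s)\,ds}\Phi_j^x(t,x)\bigr)\,\psi_2^\eps(t,y)$, obtained by replacing $\psi_1^\eps$ with the adiabatic approximant from Proposition~\ref{prop:Kato}. Collecting the adiabatic phase together with the two semiclassical phases of $\psi_2^\eps$ produces exactly the single exponential displayed in the statement, with amplitude $\Phi_j^x(t,x)\,u_2(t,(y-q(t))/\sqrt\eps)$. Because the correction affects only the $x$-factor, the product structure makes the $L^2$-error factorize:
\[
\|\psi_{\rm app}^\eps(t)-\tilde\psi_{\rm app}^\eps(t)\|_{L^2}=\bigl\|\psi_1^\eps(t)-e^{-\frac i\eps\int_0^t\eigen_j\,ds}\Phi_j^x(t)\bigr\|_{L^2_x}\,\|\psi_2^\eps(t)\|_{L^2_y}.
\]
As recorded in Section~\ref{sec:taylor-semi}, the change of variables $z=(y-q(t))/\sqrt\eps$ gives $\|\psi_2^\eps(t)\|_{L^2_y}=\|u_2(t)\|_{L^2_z}=\|a\|_{L^2}$, independent of $t$ and $\eps$. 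Hence Proposition~\ref{prop:Kato} bounds the right-hand side by $C_T\|a\|_{L^2}\,\eps$ on any interval $[0,T]$.

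Then I would close with the triangle inequality $\|\psi^\eps(t)-\tilde\psi_{\rm app}^\eps(t)\|_{L^2}\le\|\psi^\eps(t)-\psi_{\rm app}^\eps(t)\|_{L^2}+\|\psi_{\rm app}^\eps(t)-\tilde\psi_{\rm app}^\eps(t)\|_{L^2}$. On $[0,T]$ the first term is $\O(\sqrt\eps+\eta/\sqrt\eps)$ by Proposition~\ref{prop:error-semi} with $\eta=\|\nabla_y W\|_{L^\infty}$, while the second is $\O(\eps)$ by the previous step; since $\eps\ll\sqrt\eps$, the adiabatic correction is absorbed into the semiclassical error, so $\tilde\psi_{\rm app}^\eps$ approximates $\psi^\eps$ to the same order as $\psi_{\rm app}^\eps$, which is the claim. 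I do not anticipate a genuine obstacle here: the content is an assembly of two established approximations glued through the tensor ansatz, and the only care required is bookkeeping — verifying that \eqref{eq:psi1} coincides with the operator $\mathfrak h_{\rm te}(t)$ of Proposition~\ref{prop:Kato} and tracking the three phases into one exponential. The same argument applies to the partial-averaging scheme of Section~\ref{sec:average-semi}, with $\mathfrak h_{\rm pa}(t)$ in place of $\mathfrak h_{\rm te}(t)$.
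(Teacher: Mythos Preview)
Your proposal is correct and is exactly the argument the paper intends: the corollary is stated without proof immediately after Proposition~\ref{prop:Kato}, as a direct assembly of Proposition~\ref{prop:error-semi} and Proposition~\ref{prop:Kato} via the tensor structure and the triangle inequality. Your bookkeeping of the phases and the observation that the adiabatic $\O(\eps)$ correction is absorbed into the semiclassical $\O(\sqrt\eps+\eta/\sqrt\eps)$ error are precisely the points one needs to check.
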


\section{Material for the numerical simulations}
All numerical calculations for the Section~\ref{sec:num} were run with the QUANTICS (Version 2.0) suite of programs for molecular quantum dynamics simulations [G. A. Worth, K. Giri, G. W. Richings, I. Burghardt, M. H. Beck, A. J\"ackle, and H.-D. Meyer. The QUANTICS Package, Version 2.0, (2020), University College London, London, U.K.]. 

\subsection{TDH and MCTDH calculations}
For each set of parameters we compared mean-field (TDH) calculations (one single-particle function per mode) to converged Multiconfiguration Time-Dependent Hartree (MCTDH) calculations (three single-particle functions per mode, i.e., nine configurations). The latter were compared to numerically ``exact" standard propagations on a two-dimensional grid, showing virtually no difference. In all cases, we chose for the primitive basis set a direct product of Gauss--Hermite functions ($128\times 1024$) associated to the local harmonic approximation of the potential energy centered at $(x, y) = (0, 0)$. We used default integrator schemes for both TDH and MCTDH: norm and energy preservation were dutifully checked. The initial datum of each situation was chosen as a quasi-coherent state corresponding to the local harmonic ground state (centered at $(x, y) = (0, 0)$, with no extra momentum, and with natural widths $1/\sqrt{2}$ in natural units). 

\subsection{Model set-up}
Calculations were run with dimensioned parameters and variables, all given numerically in terms of atomic units with realistic values chosen to be representative of what can be encountered in a typical organic molecule. The only quantity left with a dimension is time. The time scale used in both Figures is the natural time for the 
system coordinate~$x$, namely $t_1 =1/\omega_1 = \hbar/E_1$, which within our settings is equal to $109.74$ a.u. ($2.6544$ fs). 
We used a unit local curvature at the origin in natural units and a double-well width $\ell = 4$. In atomic units, 
where we chose $a_1 = 0.24536$ bohr, we have a dimensioned $a_1\cdot\ell = 0.98142$ bohr. 

\subsection{The reference model, blue curves}

\begin{description}
\item[Masses:] $\mu_1 = 1$ amu = 1822.9 m$_e$, $\mu_2 = 16$ amu = 29166 m$_e$, $\varepsilon=\sqrt{\mu_1/\mu_2}=1/4$
\item[Energies/frequencies:] $\omega_1 = 2000$ cm$^{-1} = 0.0091127$ hartree, 
$\omega_2 = 20$ cm$^{-1} = 0.000091127$ hartree , $\varpi = \omega_2/\omega_1=1/100$
\item[Natural scales for $x$:] $a_1 = 0.24536$ bohr (length),  $E_1 = 0.0091127$ hartree (energy), 
$t_1 = 109.74 \hbar$ hartree$^{-1} =2.6544$ fs (time)
\item[Double-well width:] $\ell = 4$, i.e., $L = 4 a_1 = 0.98142$ bohr 
\item[Width for the initial datum:] $\sigma_1 = a_1/\sqrt{2} = 0.17349$ bohr, $\sigma_2 = a_2/\sqrt{2} = 0.43373$ bohr,  where $a_2 = \epsilon/\sqrt{\varpi} a_1 = 10/4 a_1$ 
\item[Coupling constant] $\eta = -k_2/(3L) = -0.000082261$ a.u., where $k_2 = \mu_2\,\omega_2^2$ 
\end{description}

\subsection{Variations of the blue model}
\begin{description}
\item[Red curves:] softer bath, but same coupling relation $\eta = -k_2/(3L)$\\ 
$\omega_2 = 0.000022782$ a.u. such that $\varpi= 1/400$; 
$\sigma_2$ = 0.86747 bohr, $\eta = -0.0000051413$ a.u. both depend on $\omega_2$
\item[Gray curves:] same bath but stronger coupling $\eta = -3/8\, k_2/L$\\ 
$\varpi, \omega_2, \sigma_2$ unchanged against original model, 
$\eta = -0.000092543$ a.u. 
\item[Yellow curves:] same bath but weaker coupling $\eta = -k_2/(4L)$\\ 
$\varpi, \omega_2, \sigma_2$ unchanged against original model, 
$\eta = -0.000030848$ a.u. 
\end{description}



\bibliographystyle{abbrv}
\bibliography{biblio}


\end{document}